\tikzset{every picture/.style=semithick}
\tikzset{->-/.style={decoration={
    markings,
    mark=at position #1 with {\arrow{angle 60}}},postaction={decorate}}}
\newtheorem{theorem}{Theorem}[section]
\newtheorem{corollary}[theorem]{Corollary}
\newtheorem{lemma}[theorem]{Lemma}
\newtheorem{proposition}[theorem]{Proposition}
\theoremstyle{definition}\newtheorem{definition}[theorem]{Definition}
\theoremstyle{remark}
\newtheorem{question}[theorem]{Question}
\newenvironment{remark}{\pushQED{\qed}\rmk}{\popQED\endrmk}
\theoremstyle{plain}\newtheorem*{theorem*}{Theorem}
\title{Generation and Simplicity in the Airplane Rearrangement Group}
\author{Matteo Tarocchi}
\address{Dipartimento di Matematica e Informatica “U. Dini”, Universit\`a di Firenze, Viale Morgagni 67A, I-50134 Firenze, Italy, EU}
\email{\href{mailto:matteo.tarocchi.math@gmail.com}{matteo.tarocchi.math@gmail.com}}
\thanks{This work is part of the author's M.Sc. thesis at the University of Florence. 
The author would like to thank his advisor Francesco Matucci for proposing this problem to him and for all of his insightful guidance and suggestions, and gratefully acknowledges James Belk and Bradley Forrest for helpful conversations and keen comments, and for sharing with the author the source code of their fractal images.
The author wishes to thank an anonymous referee for useful suggestions and improvements to the exposition.}
\begin{document}

\begin{abstract}
We study the group $T_A$ of rearrangements of the Airplane limit space introduced by Belk and Forrest in~\cite{belk2016rearrangement}. We prove that $T_A$ is generated by a copy of Thompson's group $F$ and a copy of Thompson's group $T$, hence it is finitely generated. Then we study the commutator subgroup $[T_A, T_A]$, proving that the abelianization of $T_A$ is isomorphic to $\mathbb{Z}$ and that $[T_A, T_A]$ is simple, finitely generated and acts 2-transitively on the so-called components of the Airplane limit space. Moreover, we show that $T_A$ is contained in $T$ and contains a natural copy of the Basilica rearrangement group $T_B$ studied in~\cite{Belk_2015}.
\end{abstract}

\maketitle

\markboth{Matteo Tarocchi}{
Generation and Simplicity in the Airplane Rearrangement Group}

\section*{Introduction}

In the work \cite{Belk_2015} J. Belk and B. Forrest introduced the Basilica rearrangement group $T_B$ of certain homeomorphisms of the Basilica Julia set (depicted in Figure \ref{fig_B_limit_space}), a \textit{Thompson-like} group that generalizes Thompson's groups $F$ and $T$. These two famous groups are defined as certain groups of orientation-preserving homeomorphisms of $[0,1]$ and $S^1$, respectively, but they have as many equivalent definitions as there are places in which they appear. The group $T$, introduced by Richard Thompson in the 1960's in connection with his work in logic, is the first example of a finitely presented infinite simple group, and it contains natural isomorphic copies of $F$. More about Thompson's groups can be read in \cite{cfp}.

\begin{figure}[h!]\centering
\includegraphics[width=.575\textwidth]{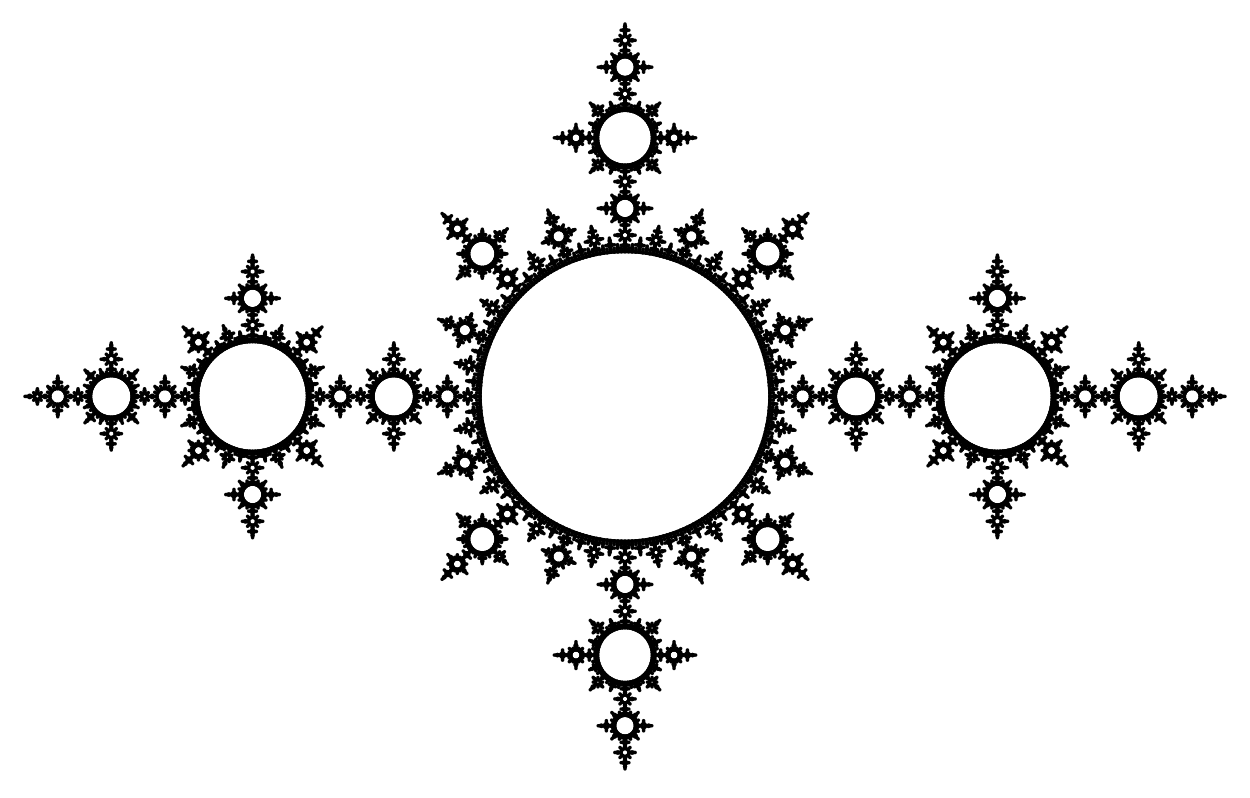}
\caption{The Airplane limit space.}
\label{fig_A_limit_space}
\end{figure}

In the subsequent article \cite{belk2016rearrangement}, Belk and Forrest introduced the family of rearrangement groups of limit spaces, which includes both Thompson's groups $F, T$ and $V$ and the Basilica rearrangement group $T_B$. Each of these groups is associated to a certain fractal, and consists of certain homeomorphisms of the fractal that permute the self-similar cells that it is made up of.

\medskip
In this paper we study the group $T_A$ of rearrangements of the Airplane limit space depicted in Figure \ref{fig_A_limit_space}, which is homeomorphic to a fractal known as the Airplane Julia set. We prove that $T_A$ is generated by natural copies of both Thompson's groups $F$ and $T$, hence $T_A$ is finitely generated. Then we focus our attention on the commutator subgroup of $T_A$, proving in particular that it is simple, finitely generated and infinite index in $T_A$. More precisely, we prove the results collected in the following theorem:

\medskip

\noindent
\textbf{Main Theorem.}
\emph{The group $T_A$ is finitely generated and its commutator subgroup $[T_A, T_A]$ is simple and finitely generated. Moreover, $T_A \simeq [T_A, T_A] \rtimes \mathbb{Z}$.}

\medskip

\noindent

This result shows uncommon behavior in the world of generalized Thompson groups, since $T_A$ is a finitely generated group whose commutator subgroup is infinite index and finitely generated. In several known cases of finitely generated Thompson groups whose commutator subgroup has been studied and shown to be infinite index, the commutator subgroups have also been proved to be infinitely generated: this is, in fact, true for for Thompson's group $F$, the Cleary golden ratio group $F_\tau$ (\cite{burillo2021irrationalslope}), generalized Thompson groups $F_n$ and more generally finitely generated Stein groups over the unit interval $[0,1]$ (see \cite{Stein1992GroupsOP}), since the commutator subgroup in all these groups has support bounded away from $0$ and $1$ and so the standard argument to show infinite generation works out (we will use it too in Proposition~\ref{proposition_E_finitely_generated} for a special subgroup of $[T_A, T_A]$). One notable exception is given by certain topological full groups associated to irreducible shifts of finite type (the groups $\llbracket G_{\varphi_k} \rrbracket$ discussed at the end of \cite{MR3377390}), although these groups are much more ``$V$-like'', whereas $T_A$ is arguably more ``$T$-like''.

\medskip

We also show that $T$ contains an isomorphic copy of $T_A$, and we prove that $T_A$ includes an unexpected natural copy of the Basilica rearrangement group $T_B$ studied in \cite{belk2016rearrangement}. Moreover, we study an infinitely generated subgroup $E$ of the commutator subgroup of $T_A$ and we investigate its transitivity properties, which then extend to both $T_A$ and $[T_A, T_A]$.

\medskip

This paper is organized as follows. Section \ref{section_Thompson} gives a brief introduction to Thompson's groups $F$ and $T$. In Section \ref{section_definitions} we recall the essential definitions of rearrangements and limit spaces from \cite{belk2016rearrangement}. In Section \ref{section_Airplane} we define components, rays and component paths in the Airplane limit space. In Section \ref{section_thompson_in_TA} we exhibit two important natural copies of $F$ and $T$ in $T_A$. In Section \ref{section_generators_Airplane} we prove that $T_A$ is finitely generated. Section \ref{section_commutator} is all about the commutator subgroup $[T_A, T_A]$. In Section \ref{section_circular} we prove that $T$ contains an isomorphic copy of $T_A$. Section \ref{section_E} deals with a specific subgroup $E$ of $T_A$ defined by its action on the extremes of the Airplane limit space. Finally, in Section \ref{section_TB_in_TA} we exhibit a natural copy of $T_B$ contained in $T_A$.

\section{\texorpdfstring{Thompson's groups $F$ and $T$}{Thompson's groups F and T}}\label{section_Thompson}

In this section we briefly introduce Thompson's groups $F$ and $T$, giving their definitions and standard sets of generators. For more details about Thompson's group, we refer the reader to the introductory notes \cite{cfp}.

Consider those partitions of $[0,1]$, such as the one depicted in Figure \ref{fig_subdivision}, that can be obtained by cutting the unit interval in half obtaining $\{[0,\frac{1}{2}], [\frac{1}{2},1]\}$, then cutting one or both of these two intervals in half, and so on, a finite amount of times. These partitions are called \textbf{dyadic subdivisions} of $[0,1]$, and they consist of intervals of the form
\[\left[\frac{a}{2^b},\frac{a+1}{2^b}\right],\]
which are called \textbf{standard dyadic intervals}. Moreover, the extremes of these intervals are \textbf{dyadic points} of $[0,1]$, which means that they belong to $\mathbb{Z}[\frac{1}{2}] \cap [0,1]$.

\begin{figure}[b]\centering
\begin{tikzpicture}
    \draw (0,0) node[below]{$0$} -- (1,0) node[below]{$\frac{1}{4}$} -- (1.5,0) node[below]{$\frac{3}{8}$} -- (2,0) node[below]{$\frac{1}{2}$} -- (3,0) node[below]{$\frac{3}{4}$} -- (4,0) node[below]{$1$};
    \node at (0,0) [circle,fill,inner sep=1.25]{};
    \node at (1,0) [circle,fill,inner sep=1]{};
    \node at (1.5,0) [circle,fill,inner sep=1]{};
    \node at (2,0) [circle,fill,inner sep=1]{};
    \node at (3,0) [circle,fill,inner sep=1]{};
    \node at (4,0) [circle,fill,inner sep=1.25]{};
\end{tikzpicture}
\caption{An example of dyadic subdivision of $[0,1]$.}
\label{fig_subdivision}
\end{figure}
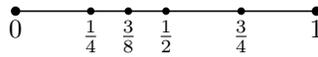

Consider the unit circle $S^1 := \frac{[0,1]}{\{0,1\}}$. By taking the quotient of dyadic subdivisions of $[0,1]$ under the set $\{0,1\}$, we obtain the dyadic subdivisions of $S^1$.

A \textbf{dyadic rearrangement} of the unit interval $[0,1]$ (the unit circle $S^1$) is an orientation-preserving piecewise linear homeomorphism $f: [0,1] \to [0,1]$ ($S^1 \to S^1$) that maps linearly between the intervals of two dyadic subdivisions. \textbf{Thompson's groups} $F$ and $T$ are the groups under composition of the dyadic rearrangements of the unit interval $[0,1]$ and the unit circle $S^1$, respectively.

The elements of Thompson's group $F$ are specified by a pair of dyadic subdivisions of $[0,1]$ with the same number of dyadic intervals, such as those in Figure \ref{fig_F_gen}. By this we mean that the $n$-th interval of the first subdivision is mapped linearly into the $n$-th interval of the second subdivision. In a similar fashion, the elements of Thompson's group $T$ are specified by a pair of dyadic subdivisions of $S^1$ with the same number of dyadic intervals, along with certain colorations of the dyadic intervals for the two subdivisions, such as those in Figure \ref{fig_T_gen}. By this we mean that an interval of the first subdivision is mapped linearly to the interval of the second subdivision that has the same color (or, equivalently, the same letter). Note that a single color (or a single letter) would suffice, since the elements of $T$ are orientation-preserving.

\begin{figure}\centering
\begin{subfigure}{.475\textwidth}
\centering
\begin{tikzpicture}[scale=.6]
    \draw (0,0.5) node{};
    
    \draw (0,0) node[below] {$0$} -- (1,0) node[below] {$\frac{1}{4}$} -- (2,0) node[below] {$\frac{1}{2}$} -- (4,0) node[below] {$1$};
    \node at (0,0) [circle,fill,inner sep=1]{};
    \node at (1,0) [circle,fill,inner sep=1]{};
    \node at (2,0) [circle,fill,inner sep=1]{};
    \node at (4,0) [circle,fill,inner sep=1]{};
    
    \draw[-to] (4.55,0) -- node[midway,above]{$X_0$} (4.95,0);
    
    \draw (11/2,0) node[below] {$0$} -- (15/2,0) node[below] {$\frac{1}{2}$} -- (17/2,0) node[below] {$\frac{3}{4}$} -- (19/2,0) node[below] {$1$};
    \node at (11/2,0) [circle,fill,inner sep=1]{};
    \node at (15/2,0) [circle,fill,inner sep=1]{};
    \node at (17/2,0) [circle,fill,inner sep=1]{};
    \node at (19/2,0) [circle,fill,inner sep=1]{};
\end{tikzpicture}
\end{subfigure}
\hfill
\begin{subfigure}{.475\textwidth}
\centering
\begin{tikzpicture}[scale=.6]
    \draw (0,0.5) node{};
    
    \draw (0,0) node[below] {$0$} -- (2,0) node[below] {$\frac{1}{2}$} -- (2.5,0) node[below] {$\frac{5}{8}$} -- (3,0) node[below] {$\frac{3}{4}$} -- (4,0) node[below] {$1$};
    \node at (0,0) [circle,fill,inner sep=1]{};
    \node at (2,0) [circle,fill,inner sep=1]{};
    \node at (2.5,0) [circle,fill,inner sep=1]{};
    \node at (3,0) [circle,fill,inner sep=1]{};
    \node at (4,0) [circle,fill,inner sep=1]{};
    
    \draw[-to] (4.55,0) -- node[midway,above]{$X_1$} (4.95,0);
    
    \draw (11/2,0) node[below] {$0$} -- (15/2,0) node[below] {$\frac{1}{2}$} -- (17/2,0) node[below] {$\frac{3}{4}$} -- (9,0) node[below] {$\frac{7}{8}$} -- (19/2,0) node[below] {$1$};
    \node at (11/2,0) [circle,fill,inner sep=1]{};
    \node at (15/2,0) [circle,fill,inner sep=1]{};
    \node at (17/2,0) [circle,fill,inner sep=1]{};
    \node at (9,0) [circle,fill,inner sep =1]{};
    \node at (19/2,0) [circle,fill,inner sep=1]{};
\end{tikzpicture}
\end{subfigure}
\caption{The generators $X_0$ and $X_1$ of Thompson's group $F$.}
\label{fig_F_gen}
\end{figure}
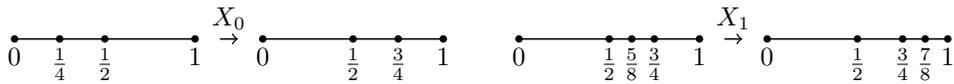

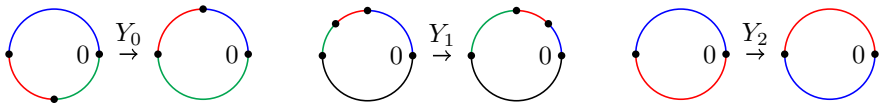
\begin{figure}[b]\centering
\begin{subfigure}{.32\textwidth}
\centering
\begin{tikzpicture}[scale=.6]
    \draw[blue] (1,0) node[black,left]{$0$} arc [radius=1, start angle=0, end angle=180] node[midway,above]{\footnotesize A};
    \draw[red] (-1,0) arc [radius=1, start angle=180, end angle=270] node[midway,below left]{\footnotesize B};
    \draw[Green] (0,-1) arc [radius=1, start angle=270, end angle=360] node[midway,below right]{\footnotesize C};
    
    \node at (1,0) [circle,fill,inner sep=1]{};
    \node at (0,-1) [circle,fill,inner sep=1]{};
    \node at (-1,0) [circle,fill,inner sep=1]{};
    
    \draw[-to] (1.45,0) -- node[midway,above]{$Y_0$} (1.85,0);
    
    \draw[blue] (4.3,0) node[black,left]{$0$} arc [radius=1, start angle=0, end angle=90] node[midway,above right]{\footnotesize A};
    \draw[red] (3.3,1) arc [radius=1, start angle=90, end angle=180] node[midway,above left]{\footnotesize B};
    \draw[Green] (2.3,0) arc [radius=1, start angle=180, end angle=360] node[midway,below]{\footnotesize C};
    
    \node at (4.3,0) [circle,fill,inner sep=1]{};
    \node at (3.3,1) [circle,fill,inner sep=1]{};
    \node at (2.3,0) [circle,fill,inner sep=1]{};
\end{tikzpicture}
\end{subfigure}
\begin{subfigure}{.32\textwidth}
\centering
\begin{tikzpicture}[scale=.6]
    \draw[blue] (1,0) node[black,left]{$0$} arc [radius=1, start angle=0, end angle=90] node[midway,above right]{\footnotesize A};
    \draw[red] (0,1) arc [radius=1, start angle=90, end angle=135] node[midway,above]{\footnotesize B};
    \draw[Green] (-1,0) arc [radius=1, start angle=180, end angle=135] node[midway,left]{\footnotesize C};
    \draw (-1,0) arc [radius=1, start angle=180, end angle=360] node[midway,below]{\footnotesize D};
    
    \node at (1,0) [circle,fill,inner sep=1]{};
    \node at (0,1) [circle,fill,inner sep=1]{};
    \node at (-1,0) [circle,fill,inner sep=1]{};
    \node at (-0.70711,0.70711) [circle,fill,inner sep=1]{};
    
    \draw[-to] (1.45,0) -- node[midway,above]{$Y_1$} (1.85,0);
    
    \draw[blue] (4.3,0) node[black,left]{$0$} arc [radius=1, start angle=0, end angle=45] node[midway,right]{\footnotesize A};
    \draw[red] (3.3,1) arc [radius=1, start angle=90, end angle=45] node[midway,above]{\footnotesize B};
    \draw[Green] (2.3,0) arc [radius=1, start angle=180, end angle=90] node[midway,left]{\footnotesize C};
    \draw (2.3,0) arc [radius=1, start angle=180, end angle=360] node[midway,below]{\footnotesize D};
    
    \node at (4.3,0) [circle,fill,inner sep=1]{};
    \node at (3.3,1) [circle,fill,inner sep=1]{};
    \node at (2.3,0) [circle,fill,inner sep=1]{};
    \node at (4.00711,0.70711) [circle,fill,inner sep=1]{};
\end{tikzpicture}
\end{subfigure}
\begin{subfigure}{.32\textwidth}
\centering
\begin{tikzpicture}[scale=.6]
    \draw[blue] (1,0) node[black,left]{$0$} arc [radius=1, start angle=0, end angle=180] node[midway,above]{\footnotesize A};
    \draw[red] (-1,0) arc [radius=1, start angle=180, end angle=360] node[midway,below]{\footnotesize B};
    
    \node at (1,0) [circle,fill,inner sep=1]{};
    \node at (-1,0) [circle,fill,inner sep=1]{};
    
    \draw[-to] (1.45,0) -- node[midway,above]{$Y_2$} (1.85,0);
    
    \draw[red] (4.3,0) node[black,left]{$0$} arc [radius=1, start angle=0, end angle=180] node[midway,above]{\footnotesize B};
    \draw[blue] (2.3,0) arc [radius=1, start angle=180, end angle=360] node[midway,below]{\footnotesize A};
    
    \node at (4.3,0) [circle,fill,inner sep=1]{};
    \node at (2.3,0) [circle,fill,inner sep=1]{};
\end{tikzpicture}
\end{subfigure}
\caption{The three generators of Thompson's group $T$.}
\label{fig_T_gen}
\end{figure}

The group $F$ is generated by the two elements $X_0$ and $X_1$ depicted in Figure \ref{fig_F_gen}, while $T$ is generated by $Y_0, Y_1$ and $Y_2$ depicted in Figure \ref{fig_T_gen}. Further properties of $F$ and $T$ can be found in \cite{belk2007thompsons, burillo, cfp}.

\section{Limit spaces and rearrangements}\label{section_definitions}

Limit spaces of replacement systems and their rearrangements were introduced in \cite{belk2016rearrangement}, which goes in much more details than we will get to do. In this section we briefly describe these notions, introducing the Airplane limit space along the way.

\subsection{Replacement systems and limit spaces}\label{subsection_limit_spaces}

Essentially, a \textbf{replacement system} consists of a \textbf{base graph} $\Gamma$ colored by the set of colors $Col$, along with a \textbf{replacement graph} $R_c$ for each color $c \in Col$. Figure \ref{fig_replacement_A} depicts the so called Airplane replacement system, denoted by $\mathcal{A}$. We can expand the base graph $\Gamma$ by replacing one of its edges $e$ with the replacement graph $R_c$ indexed by the color $c$ of $e$, as exemplified in Figure \ref{fig_exp_A}. The graph resulting from this process of replacing one edge with the appropriate replacement graph is called a \textbf{simple expansion}. Simple expansions can be repeated any finite amount of times, which generate the so-called \textbf{expansions} of the replacement system, such as the one in Figure \ref{fig_exp_A_generic}.

Note that each edge of an expansion corresponds to the unique finite sequence of edges ``converging'' from the base graph. As an example, consider the leftmost edge of Figure \ref{fig_exp_A_generic}. In order to identify this edge, one must first restrict their attention to the leftmost blue edge $e_0$ of the base graph; then one expands this edge with the blue replacement graph and restrict their attention to the leftmost edge $e_1$ that has thus been generated; finally, one expands this edge and consider the leftmost edge $e_2$ generated by the last expansion, which is precisely the edge that we were looking for. In this sense, we say that the leftmost edge of Figure \ref{fig_exp_A_generic} corresponds to the sequence $e_0 e_1 e_2$. For more precise definitions we refer the reader to Section 1.1 of \cite{belk2016rearrangement}.

\begin{figure}\centering
\begin{subfigure}{.4\textwidth}
\centering
\begin{tikzpicture}[scale=.8]
\draw[->-=.5,blue] (-0.5,0) -- (-2,0) node[black,circle,fill,inner sep=1.25]{}; \draw[->-=.5,blue] (0.5,0) -- (2,0) node[black,circle,fill,inner sep=1.25]{};
\draw[->-=.5,red] (0.5,0) node[circle,fill,inner sep=1.25]{} to[out=90,in=90,looseness=1.7] (-0.5,0);
\draw[->-=.5,red] (-0.5,0) node[black,circle,fill,inner sep=1.25]{} to[out=270,in=270,looseness=1.7] (0.5,0) node[black,circle,fill,inner sep=1.25]{};
\end{tikzpicture}
\caption{The base graph.}
\label{fig_A_base}
\end{subfigure}
\begin{subfigure}{.5\textwidth}
\centering
\begin{subfigure}{.45\textwidth}
\centering
\begin{tikzpicture}[scale=.8]
\draw[->-=.5,draw=red] (0,0) node[left]{$v_i$} node[black,circle,fill,inner sep=1.25]{} -- (0,2.1) node[left]{$v_t$} node[black,circle,fill,inner sep=1.25]{};

\draw[-stealth] (0.35,1.05) -- (0.65,1.05);

\draw[->-=.5,red] (1,0) node[black,circle,fill,inner sep=1.25]{} node[black,left]{$v_i$} -- (1,1.05);
\draw[->-=.5,red] (1,1.05) -- (1,2.1) node[black,circle,fill,inner sep=1.25]{} node[black,left]{$v_t$};
\draw[->-=.5,blue] (1,1.05) node[black,circle,fill,inner sep=1.25]{} -- (2,1.05) node[black,circle,fill,inner sep=1.25]{};
\end{tikzpicture}
\end{subfigure}
\begin{subfigure}{.45\textwidth}
\centering
\begin{tikzpicture}[scale=.8]
\draw[->-=.5,draw=blue] (0,0) node[black,left]{$v_i$} node[black,circle,fill,inner sep=1.25]{} -- (0,2.1) node[black,left]{$v_t$} node[black,circle,fill,inner sep=1.25]{};

\draw[-stealth] (0.35,1.05) -- (0.65,1.05);

\draw[->-=.5,blue] (1.35,0.7) -- (1.35,0) node[black,circle,fill,inner sep=1.25]{} node[black,left]{$v_i$};
\draw[->-=.5,blue] (1.35,1.4) -- (1.35,2.1) node[black,circle,fill,inner sep=1.25]{} node[black,left]{$v_t$};
\draw[->-=.5,red] (1.35,1.4) to[out=180,in=180,looseness=1.7] (1.35,0.7);
\draw[->-=.5,red] (1.35,0.7) node[black,circle,fill,inner sep=1.25]{} to[out=0,in=0,looseness=1.7] (1.35,1.4) node[black,circle,fill,inner sep=1.25]{};
\end{tikzpicture}
\end{subfigure}
\caption{The two replacement rules: $e \to R_{red}$ if $e$ is red, and $e \to R_{blue}$ if $e$ is blue.}
\label{fig_A_replacement_rule}
\end{subfigure}
\caption{The Airplane replacement system $\mathcal{A}$.}
\label{fig_replacement_A}
\end{figure}

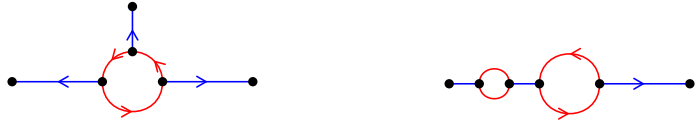
\begin{figure}\centering
\begin{subfigure}{.45\textwidth}\centering
\begin{tikzpicture}[scale=.8]
\draw[->-=.5,blue] (-0.5,0) -- (-2,0) node[black,circle,fill,inner sep=1.25]{}; \draw[->-=.5,blue] (0.5,0) -- (2,0) node[black,circle,fill,inner sep=1.25]{};
\draw[->-=.5,red] (0.5,0) to[out=90,in=0] (0,0.5);
\draw[->-=.5,blue] (0,0.5) -- (0,1.25) node[black,circle,fill,inner sep=1.25]{};
\draw[->-=.5,red] (0,0.5) node[black,circle,fill,inner sep=1.25]{} to[out=180,in=90] (-0.5,0);
\draw[->-=.5,red] (-0.5,0) node[black,circle,fill,inner sep=1.25]{} to[out=270,in=270,looseness=1.7] (0.5,0) node[black,circle,fill,inner sep=1.25]{};
\end{tikzpicture}
\end{subfigure}
\begin{subfigure}{.45\textwidth}
\centering
\begin{tikzpicture}[scale=.8]
\draw[blue] (-0.5,0) -- (-1,0); \draw[blue] (-2,0) node[black,circle,fill,inner sep=1.25]{} -- (-1.5,0);
\draw[red] (-1,0) to[out=90,in=90,looseness=1.7] (-1.5,0);
\draw[red] (-1.5,0) node[black,circle,fill,inner sep=1.25]{} to[out=270,in=270,looseness=1.7] (-1,0) node[black,circle,fill,inner sep=1.25]{};
\draw[->-=.5,blue] (0.5,0) -- (2,0) node[black,circle,fill,inner sep=1.25]{};
\draw[->-=.5,red] (0.5,0) to[out=90,in=90,looseness=1.7] (-0.5,0);
\draw[->-=.5,red] (-0.5,0) node[black,circle,fill,inner sep=1.25]{} to[out=270,in=270,looseness=1.7] (0.5,0) node[black,circle,fill,inner sep=1.25]{};
\node at (0,1.25) {};
\end{tikzpicture}
\end{subfigure}
\caption{Two simple expansions of the base graph of the Airplane replacement system $\mathcal{A}$.}
\label{fig_exp_A}
\end{figure}

\begin{figure}\centering
\begin{tikzpicture}[scale=1.65]
\draw[blue] (-0.5,0) -- (-2,0);
\draw[blue] (0.5,0) -- (2,0);
\draw[blue] (0,-0.5) -- (0,-1.25);
\draw[blue] (0.35,-0.35) -- (0.75,-0.75);
\draw[blue] (0.55,-0.55) -- (0.7,-0.4);
\draw[blue] (1.25,0) -- (1.25,0.75);
\draw[blue] (1.25,0) -- (1.625,0.375);
\draw[blue] (-1.25,0) -- (-1.25,0.75);
\draw[blue] (-1.25,0.5) -- (-1.05,0.5);
\draw[blue] (-1.75,0) -- (-1.75,-0.2);
\draw[red] (0,0) circle (0.5);
\draw[red,fill=white] (-1.25,0) circle (0.25);
\draw[red,fill=white] (1.25,0) circle (0.25);
\draw[red,fill=white] (0.55,-0.55) circle (0.0625);
\draw[red,fill=white] (-1.25,0.5) circle (0.0625);
\draw[red,fill=white] (-1.11875,0.5) circle (0.025);
\draw[red,fill=white] (-1.75,0) circle (0.0625);
\end{tikzpicture}
\caption{A generic expansion of the Airplane replacement system $\mathcal{A}$.}
\label{fig_exp_A_generic}
\end{figure}
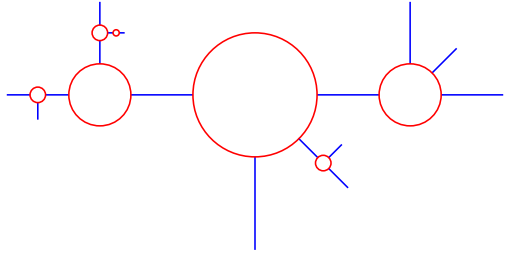

Consider the \textbf{full expansion sequence}, which is the sequence of graphs obtained by replacing, at each step, every edge with the appropriate replacement graph, starting from the base graph. Figure \ref{fig_fullexp_A} shows the first graphs (except for the base graph) of the full expansion sequence for the Airplane replacement system $\mathcal{A}$.
If a replacement system satisfies certain simple properties (which $\mathcal{A}$ satisfy), then we can define the \textbf{limit space}, which is essentially the limit of the full expansion sequence of the replacement system (Definition 1.8 and Proposition 1.9 of \cite{belk2016rearrangement}). Keeping in mind that finite sequences of edges correspond to edges of expansions, the limit space should be thought of as the set of infinite sequences of edges modulo an equivalence relation that ``glues'' certain sequences together.
For example, if one expands the top red edge of the base graph and then keeps expanding (infinitely many times) the leftmost red edge, or if one expands the left blue edge of the base graph and then keeps expanding the rightmost blue edge, then one has found the same point with two distinct sequences, which must then be glued together;
intuitively, this point corresponds to the second vertex of the base graph, from the left.
Again, for more details we refer to Definitions 1.6 and 1.7 of \cite{belk2016rearrangement}. In particular, the the Airplane limit space is the space depicted in Figure \ref{fig_A_limit_space}. By Proposition 9 of \cite{belk2016rearrangement}, this is a compact and metrizable topological space.

It is worth mentioning that our Airplane limit space (and limit spaces of replacement systems in general) is a \textit{topological space}, whereas the Airplane Julia set is a fractal embedded in the euclidean plane. This Julia set is only one of the infinitely many quadratic Julia sets corresponding to the complex functions $f(z) = z^2 - p$ for $p$ belonging to the same interior component of the Mandelbrot set as $p = 1.755$, and these are all homeomorphic as topological spaces, although they are different as Julia sets and have different metric properties. Here we will only be treating the Airplane fractal as a topological space.

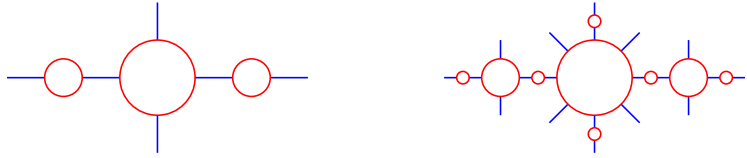
\begin{figure}\centering
\begin{subfigure}{.45\textwidth}
\centering
\begin{tikzpicture}[scale=1]
\draw[blue] (-2,0) -- (2,0);
\draw[blue] (0,1) -- (0,-1);
\draw[red,fill=white] (0,0) circle (0.5);
\draw[red,fill=white] (-1.25,0) circle (0.25);
\draw[red,fill=white] (1.25,0) circle (0.25);
\end{tikzpicture}
\end{subfigure}
\begin{subfigure}{.45\textwidth}
\centering
\begin{tikzpicture}[scale=1]
\draw[blue] (-2,0) -- (2,0);
\draw[blue] (0,1) -- (0,-1);
\draw[blue] (-1.25,0.5) -- (-1.25,-0.5);
\draw[blue] (1.25,0.5) -- (1.25,-0.5);
\draw[blue] (-0.6,-0.6) -- (0.6,0.6);
\draw[blue] (-0.6,0.6) -- (0.6,-0.6);
\draw[red,fill=white] (0,0) circle (0.5);
\draw[red,fill=white] (-1.25,0) circle (0.25);
\draw[red,fill=white] (1.25,0) circle (0.25);
\draw[red,fill=white] (0.75,0) circle (0.083);
\draw[red,fill=white] (1.75,0) circle (0.083);
\draw[red,fill=white] (-0.75,0) circle (0.083);
\draw[red,fill=white] (-1.75,0) circle (0.083);
\draw[red,fill=white] (0,0.75) circle (0.083);
\draw[red,fill=white] (0,-0.75) circle (0.083);
\end{tikzpicture}
\end{subfigure}
\caption{The beginning of the full expansion sequence for $\mathcal{A}$.}
\label{fig_fullexp_A}
\end{figure}

\subsection{Cells and rearrangements}\label{subsection_rearrangements}

Intuitively, a cell $\chi(e)$ of a limit space corresponds to the edge $e$ of some expansion, along with everything that appears from that edge in later expansions. More precisely, if the edge $e$ corresponds to the finite sequence $e_0 \cdots e_k$, then the cell $\chi(e)$ is the subset of a limit space consisting of those infinite sequences of edges that start with $e_0 \cdots e_k$. For example, Figure \ref{fig_cells_A} shows examples of cells in $\mathcal{A}$. Moreover, we say that the cell $\chi(e)$ is colored by $c$ if the edge $e$ is colored by $c$.

\begin{figure}\centering
\includegraphics[width=.425\textwidth]{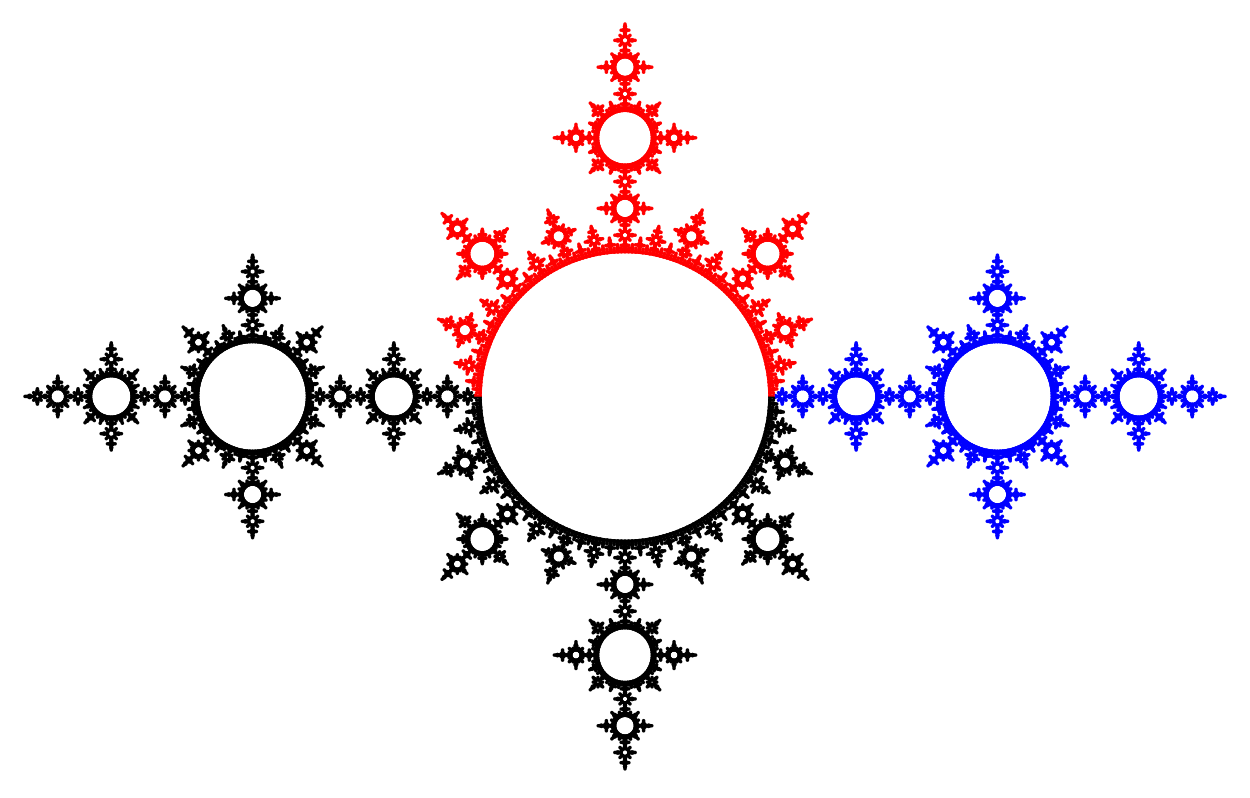}
\caption{The two types of cells in the Airplane replacement system $\mathcal{A}$, distinguished by the color of the generating edge.}
\label{fig_cells_A}
\end{figure}

There are different \textit{types} of cells $\chi(e)$, distinguished by two aspects of the generating edge $e$: its color and whether it is a loop or not. It is not hard to see that there is a \textbf{canonical homeomorphism} between any two cells of the same type. More precisely, if the two cells correspond to the edges identified by the sequences $e_0 \cdots e_k$ and $f_0 \cdots f_l$, then the homeomorphism maps each infinite sequence $e_0 \cdots e_k w$ to $f_0 \cdots f_l w$. A canonical homeomorphism between two cells can essentially be thought of as a transformation that maps the first cell ``rigidly'' to the second.
More details are given in Section 1.3 of \cite{belk2016rearrangement}. Note that there are only two types of cells in the Airplane limit space, the red one and the blue one (depicted in Figure \ref{fig_cells_A}), because no edge of any expansion is ever a loop.

\begin{definition}
A \textbf{cellular partition} of the limit space $X$ is a cover of $X$ by finitely many cells whose interiors are disjoint.
\end{definition}

Note that there is a natural bijection between the set of expansions of a replacement system and the set of cellular partitions, which consists of mapping each edge $e$ of the expansion to the cell $\chi(e)$.

\begin{definition}[Definition 1.14(2) of \cite{belk2016rearrangement})]
A homeomorphism $f: X \to X$ is called a \textbf{rearrangement} of $X$ if there exists a cellular partition $\mathcal{P}$ of $X$ such that $f$ restricts to a canonical homeomorphism on each cell of $\mathcal{P}$.
\end{definition}

It can be proved that the rearrangements of a limit space $X$ form a group under composition, called the \textbf{rearrangement group} of $X$ (Proposition 1.16 of \cite{belk2016rearrangement}). In particular, the rearrangement group of the Airplane limit space is denoted by $T_A$.

\begin{figure}\centering
\vspace{.2cm}
\begin{minipage}{.345\textwidth}\centering
\begin{overpic}[width=\textwidth]{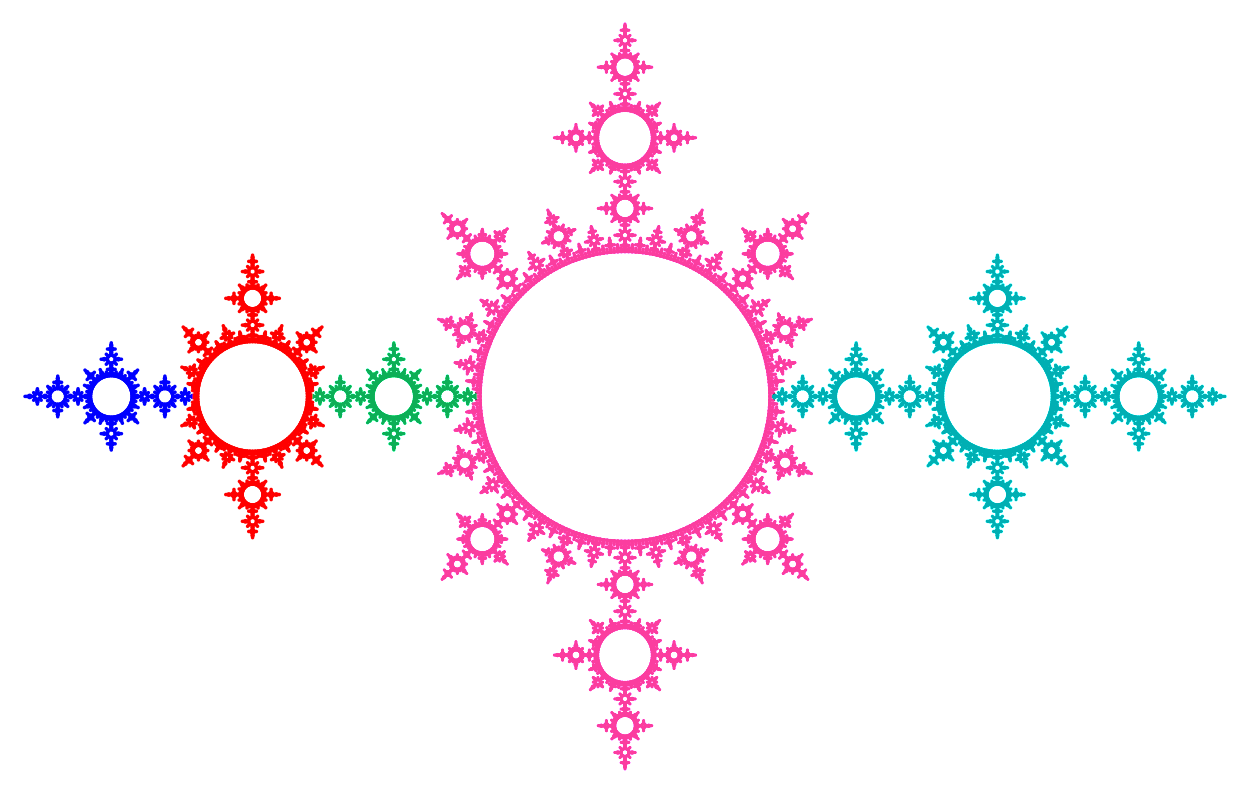}
    \put (6.5,38.5) {\footnotesize\textcolor{blue}{A}}
    \put (18,46) {\footnotesize\textcolor{red}{B}}
    \put (28.5,38.5) {\footnotesize\textcolor{Green}{C}}
    \put (47.5,64) {\footnotesize\textcolor{Magenta}{D}}
    \put (77.5,46) {\footnotesize\textcolor{TealBlue}{E}}
\end{overpic}
\end{minipage}%
\begin{minipage}{.05\textwidth}\centering
\begin{tikzpicture}[scale=1]
    \draw[-to] (0,0) -- (.3,0);
\end{tikzpicture}
\end{minipage}%
\begin{minipage}{.345\textwidth}\centering
\begin{overpic}[width=\textwidth]{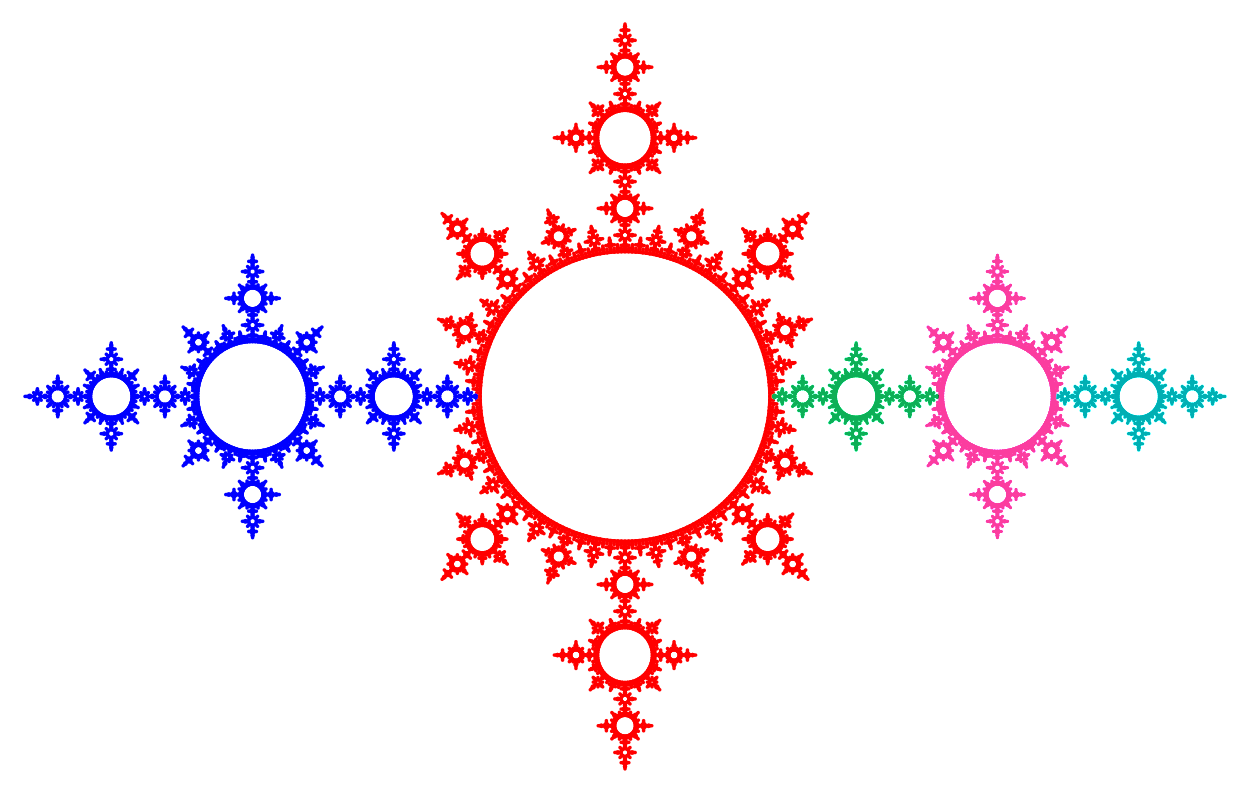}
    \put (17.5,46) {\footnotesize\textcolor{blue}{A}}
    \put (47.5,64) {\footnotesize\textcolor{red}{B}}
    \put (66,38.5) {\footnotesize\textcolor{Green}{C}}
    \put (77.5,46) {\footnotesize\textcolor{Magenta}{D}}
    \put (88.5,38.5) {\footnotesize\textcolor{TealBlue}{E}}
\end{overpic}
\end{minipage}\\
\vspace*{10pt}
\begin{tikzpicture}[scale=1]
    \draw[blue] (-2,0) -- node[midway,above]{\footnotesize A} (-1.5,0);
    \draw[red] (-1.25,0) circle (0.25);
    \draw[red] (-1.25,.25) node[above]{\footnotesize B};
    \draw[Green] (-1,0) -- (-0.5,0) node[midway,above]{\footnotesize C};
    \draw[Magenta] (0,0) circle (0.5);
    \draw[Magenta] (0,.5) node[above]{\footnotesize D};
    \draw[TealBlue] (0.5,0) -- (2,0) node[midway,above]{\footnotesize E};
    
    \draw[-to] (2.35,0) -- (2.65,0);
    
    \draw[blue] (3,0) -- (4.5,0) node[midway,above]{\footnotesize A};
    \draw[red] (5,0) circle (0.5);
    \draw[red] (5,.5) node[above]{\footnotesize B};
    \draw[Green] (5.5,0) -- (6,0) node[midway,above]{\footnotesize C};
    \draw[Magenta] (6.25,0) circle (0.25);
    \draw[Magenta] (6.25,.25) node[above]{\footnotesize D};
    \draw[TealBlue] (6.5,0) -- (7,0) node[midway,above]{\footnotesize E};
\end{tikzpicture}
\caption{A rearrangement of the Airplane limit space, along with a graph pair diagram that represents it.}
\label{fig_action_alpha}
\end{figure}

\phantomsection%
\label{graph_pair_diagrams}%
Similarly to how dyadic rearrangements (the elements of Thompson's groups) are specified by certain pairs of dyadic subdivisions, rearrangements of a limit space are specified by certain graph isomorphisms between expansions of the replacement systems, called \textbf{graph pair diagrams} (Section 1.4 of \cite{belk2016rearrangement}). For example, the rearrangement of the Airplane limit space depicted in Figure \ref{fig_action_alpha} is specified by the graph isomorphism depicted in the same figure, where the colors mean that each edge of the domain graph is mapped to the edge of the same color in the range graph.

Graph pair diagrams can be expanded by expanding an edge in the domain graph and its image in the range graph, resulting in a graph pair diagram that represents the same rearrangements. It is important to note that, for each rearrangement, there exists a unique \textbf{reduced} graph pair diagram, where reduced means that it is not the result of an expansion of any other graph pair diagram.

One may note that the graph isomorphism depicted in Figure \ref{fig_action_alpha} is not valid because the orientation of the \textcolor{Green}{green} edge is reversed. In truth, it is not hard to see that expanding that edge in both domain and range graphs provides a valid graph pair diagram. This holds in general for blue edges, because there is a graph automorphism of the blue replacement graph that switches the initial and terminal vertices, so the expansion of a blue edge is independent from its orientation. In practice, this means that we do not need to keep track of the orientation of blue edges for graph pair diagrams. However, this is not true for red edges.

\subsection{The rearrangement group of the Basilica limit space}

As another example of rearrangement group, consider the Basilica limit space (Figure \ref{fig_B_limit_space}), resulting from the (monochromatic) Basilica replacement system (Figure \ref{fig_replacement_B}) and whose rearrangement group $T_B$ is the object of the study of Belk and Forrest in \cite{Belk_2015}. In particular, Belk and Forrest proved that $T_B$ is generated by the four elements depicted in Figure \ref{fig_TB_generators}, and that the commutator subgroup $[T_B, T_B]$ is simple. In Section \ref{section_TB_in_TA} we will prove that $T_A$ contains a natural copy of this group $T_B$.

\begin{figure}\centering
\includegraphics[width=.47\textwidth]{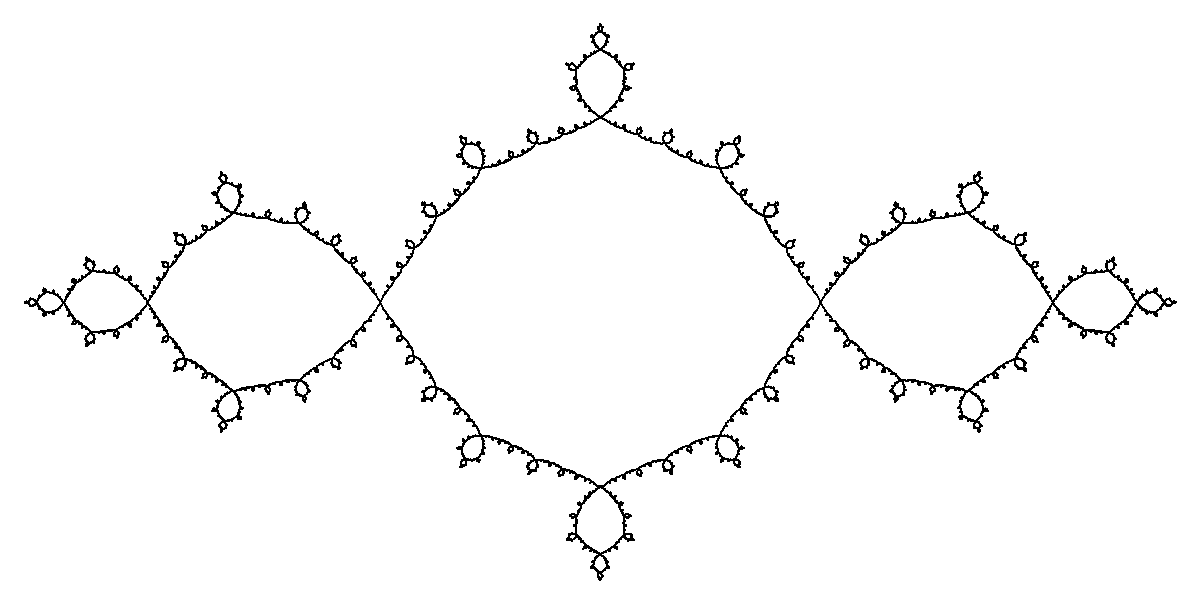}
\caption{The Basilica limit space $B$.}
\label{fig_B_limit_space}
\end{figure}

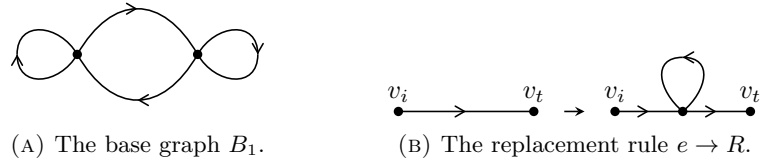
\begin{figure}\centering
\begin{subfigure}[b]{.45\textwidth}\centering
\begin{tikzpicture}[scale=.8]
\draw[->-=.5] (0,0) node[circle,fill,inner sep=1.25]{} to[out=60,in=120,looseness=1.5] (2,0);
\draw[->-=.5] (2,0) node[circle,fill,inner sep=1.25]{} to[out=240,in=300,looseness=1.5] (0,0);
\draw (0,0) to[in=270,out=240,looseness=1.5] (-1,0); \draw[->-=0] (-1,0) to[in=120,out=90,looseness=1.5] (0,0);
\draw (2,0) to[in=90,out=60,looseness=1.5] (3,0); \draw[->-=0] (3,0) to[in=300,out=270,looseness=1.5] (2,0);
\end{tikzpicture}
\caption{The base graph $B_1$.}
\label{fig_B_base}
\end{subfigure}
\begin{subfigure}[b]{.45\textwidth}\centering
\begin{tikzpicture}[scale=.9]
\draw[->-=.5] (0,0) node[circle,fill,inner sep=1.25]{} node[above]{$v_i$} -- (2,0) node[circle,fill,inner sep=1.25]{} node[above]{$v_t$};

\draw[-stealth] (2.45,0) -- (2.75,0);

\draw[->-=.5] (3.2,0) node[circle,fill,inner sep=1.25]{} node[above]{$v_i$} -- (4.2,0) node[circle,fill,inner sep=1.25]{};
\draw[->-=.5] (4.2,0) -- (5.2,0) node[circle,fill,inner sep=1.25]{} node[above]{$v_t$};
\draw (4.2,0) to[out=40,in=0,looseness=1.5] (4.2,0.8); \draw[->-=0] (4.2,0.8) to[out=180,in=140,looseness=1.5] (4.2,0);
\end{tikzpicture}
\caption{The replacement rule $e \to R$.}
\end{subfigure}
\caption{The Basilica replacement system.}
\label{fig_replacement_B}
\end{figure}

\begin{figure}\centering
\begin{subfigure}{.45\textwidth}\centering
\begin{tikzpicture}[scale=.52]
\draw[Green] (0,0) to[out=60,in=180] (1,0.75); \draw[Green] (1,0.75) to[out=0,in=120] (2,0);
\draw[Green] (2,0) to[out=240,in=0] (1,-0.75); \draw[Green] (1,-0.75) to[out=180,in=300] (0,0);
\draw[red] (0,0) to[out=240,in=270,looseness=1.5] (-1,0); \draw[red] (-1,0) to[out=90,in=120,looseness=1.5] (0,0) node[black,circle,fill,inner sep=1]{};
\draw[blue] (-1,0) to[out=120,in=90,looseness=1.5] (-1.5,0); \draw[blue] (-1.5,0) to[out=270,in=240,looseness=1.5] (-1,0) node[black,circle,fill,inner sep=1]{};
\draw[Plum] (2,0) to[out=60,in=90,looseness=1.5] (3,0); \draw[Plum] (3,0) to[out=270,in=300,looseness=1.5] (2,0) node[black,circle,fill,inner sep=1]{};
\draw[blue] (-1.3,0.15) node[above]{\footnotesize A};

\draw[-to] (3.325,0) -- (3.675,0);

\draw[red] (5,0) to[out=60,in=180] (6,0.75); \draw[red] (6,0.75) to[out=0,in=120] (7,0);
\draw[red] (7,0) to[out=240,in=0] (6,-0.75); \draw[red] (6,-0.75) to[out=180,in=300] (5,0);
\draw[blue] (5,0) to[out=240,in=270,looseness=1.5] (4,0); \draw[blue] (4,0) to[out=90,in=120,looseness=1.5] (5,0) node[black,circle,fill,inner sep=1]{};
\draw[Green] (7,0) to[in=90,out=60,looseness=1.5] (8,0); \draw[Green] (8,0) to[out=270,in=300,looseness=1.5] (7,0) node[black,circle,fill,inner sep=1]{};
\draw[Plum] (8,0) to[out=60,in=90,looseness=1.5] (8.5,0); \draw[Plum] (8.5,0) to[out=270,in=300,looseness=1.5] (8,0) node[black,circle,fill,inner sep=1]{};
\draw[blue] (4.4,0.35) node[above]{\footnotesize A};
\end{tikzpicture}
\end{subfigure}
\begin{subfigure}{.45\textwidth}\centering
\begin{tikzpicture}[scale=.52]
\draw[Magenta] (0,0) to[out=60,in=180] (1,0.75); \draw[Magenta] (1,0.75) to[out=0,in=120] (2,0);
\draw[Orange] (2,0) to[out=240,in=0] (1,-0.75); \draw[Green] (1,-0.75) to[out=180,in=300] (0,0);
\draw[red] (0,0) to[out=240,in=270,looseness=1.5] (-1,0); \draw[red] (-1,0) to[out=90,in=120,looseness=1.5] (0,0) node[black,circle,fill,inner sep=1]{};
\draw (2,0) to[out=60,in=90,looseness=1.5] (3,0); \draw (3,0) to[out=270,in=300,looseness=1.5] (2,0) node[circle,fill,inner sep=1]{};
\draw[blue] (1,-0.75) to[out=210,in=180,looseness=1.5] (1,-1.25); \draw[blue] (1,-1.25) to[out=0,in=330,looseness=1.5] (1,-0.75) node[black,circle,fill,inner sep=1]{};
\draw (2.6,0.35) node[above]{\footnotesize A};

\draw[-to] (3.325,0) -- (3.675,0);

\draw (5,0)[Green] to[out=60,in=180] (6,0.75); \draw[Magenta] (6,0.75) to[out=0,in=120] (7,0);
\draw[Orange] (7,0) to[out=240,in=0] (6,-0.75); \draw[Orange] (6,-0.75) to[out=180,in=300] (5,0);
\draw[blue] (5,0) to[out=240,in=270,looseness=1.5] (4,0); \draw[blue] (4,0) to[out=90,in=120,looseness=1.5] (5,0) node[black,circle,fill,inner sep=1]{};
\draw (7,0) to[in=90,out=60,looseness=1.5] (8,0); \draw (8,0) to[out=270,in=300,looseness=1.5] (7,0) node[circle,fill,inner sep=1]{};
\draw[red] (6,0.75) to[out=150,in=180,looseness=1.5] (6,1.25); \draw[red] (6,1.25) to[out=0,in=30,looseness=1.5] (6,0.75) node[black,circle,fill,inner sep=1]{};
\draw (7.6,0.35) node[above]{\footnotesize A};
\end{tikzpicture}
\end{subfigure}\\
\vspace*{4pt}
\begin{subfigure}{.45\textwidth}\centering
\begin{tikzpicture}[scale=.52]
\draw[Orange] (0,0) node[circle,fill,inner sep=1]{} to[out=60,in=225] (0.375,0.5);
\draw[Green] (0.375,0.5) to[out=45,in=180] (1,0.75);
\draw[blue] (0.375,0.5) to[out=75,in=45,looseness=1.5] (0.2,0.675); \draw[blue] (0.2,0.675) to[out=225,in=195,looseness=1.5] (0.375,0.5) node[black,circle,fill,inner sep=1]{};
\draw[Magenta] (1,0.75) to[out=0,in=135] (1.625,0.5);
\draw[Magenta] (1.625,0.5) to[out=315,in=120] (2,0);
\draw (2,0) to[out=240,in=0] (1,-0.75); \draw (1,-0.75) to[out=180,in=300] (0,0);
\draw (0,0) to[out=240,in=270,looseness=1.5] (-1,0); \draw (-1,0) to[out=90,in=120,looseness=1.5] (0,0) node[black,circle,fill,inner sep=1]{};
\draw (2,0) to[out=60,in=90,looseness=1.5] (3,0); \draw (3,0) to[out=270,in=300,looseness=1.5] (2,0) node[circle,fill,inner sep=1]{};
\draw[red] (1,0.75) to[out=150,in=180,looseness=1.5] (1,1.25); \draw[red] (1,1.25) to[out=0,in=30,looseness=1.5] (1,0.75) node[black,circle,fill,inner sep=1]{};
\draw (-.6,0.35) node[above]{\footnotesize A};

\draw[-to] (3.325,0) -- (3.675,0);

\draw[Orange] (5,0) node[black,circle,fill,inner sep=1]{} to[out=60,in=225] (5.375,0.5);
\draw[Orange] (5.375,0.5) to[out=45,in=180] (6,0.75);
\draw[Green] (6,0.75) to[out=0,in=135] (6.625,0.5);
\draw[Magenta] (6.625,0.5) to[out=315,in=120] (7,0);
\draw[red] (6.625,0.5) to[out=345,in=315,looseness=1.5] (6.8,0.675); \draw[red] (6.8,0.675) to[out=135,in=105,looseness=1.5] (6.625,0.5) node[black,circle,fill,inner sep=1]{};
\draw (7,0) to[out=240,in=0] (6,-0.75); \draw (6,-0.75) to[out=180,in=300] (5,0);
\draw (5,0) to[out=240,in=270,looseness=1.5] (4,0); \draw (4,0) to[out=90,in=120,looseness=1.5] (5,0) node[black,circle,fill,inner sep=1]{};
\draw (7,0) to[in=90,out=60,looseness=1.5] (8,0); \draw (8,0) to[out=270,in=300,looseness=1.5] (7,0) node[circle,fill,inner sep=1]{};
\draw[blue] (6,0.75) to[out=150,in=180,looseness=1.5] (6,1.25); \draw[blue] (6,1.25) to[out=0,in=30,looseness=1.5] (6,0.75) node[black,circle,fill,inner sep=1]{};
\draw (4.4,0.35) node[above]{\footnotesize A};
\end{tikzpicture}
\end{subfigure}
\begin{subfigure}{.45\textwidth}\centering
\begin{tikzpicture}[scale=.52]
\draw[Green] (0,0) to[out=60,in=180] (1,0.75); \draw[Green] (1,0.75) to[out=0,in=120] (2,0);
\draw[Magenta] (2,0) to[out=240,in=0] (1,-0.75); \draw[Magenta] (1,-0.75) to[out=180,in=300] (0,0);
\draw[red] (0,0) to[out=240,in=270,looseness=1.5] (-1,0); \draw[red] (-1,0) to[out=90,in=120,looseness=1.5] (0,0) node[black,circle,fill,inner sep=1]{};
\draw[blue] (2,0) to[out=60,in=90,looseness=1.5] (3,0); \draw[blue] (3,0) to[out=270,in=300,looseness=1.5] (2,0) node[black,circle,fill,inner sep=1]{};
\draw[red] (-.6,0.35) node[above]{\footnotesize A};

\draw[-to] (3.325,0) -- (3.675,0);

\draw[Magenta] (5,0) to[out=60,in=180] (6,0.75); \draw[Magenta] (6,0.75) to[out=0,in=120] (7,0);
\draw[Green] (7,0) to[out=240,in=0] (6,-0.75); \draw[Green] (6,-0.75) to[out=180,in=300] (5,0);
\draw[blue] (5,0) to[out=240,in=270,looseness=1.5] (4,0); \draw[blue] (4,0) to[out=90,in=120,looseness=1.5] (5,0) node[black,circle,fill,inner sep=1]{};
\draw[red] (7,0) to[in=90,out=60,looseness=1.5] (8,0); \draw[red] (8,0) to[out=270,in=300,looseness=1.5] (7,0) node[black,circle,fill,inner sep=1]{};
\draw[red] (7.6,0.35) node[above]{\footnotesize A};
\end{tikzpicture}
\end{subfigure}
\caption{The four generators of the Basilica rearrangement group $T_B$.}
\label{fig_TB_generators}
\end{figure}
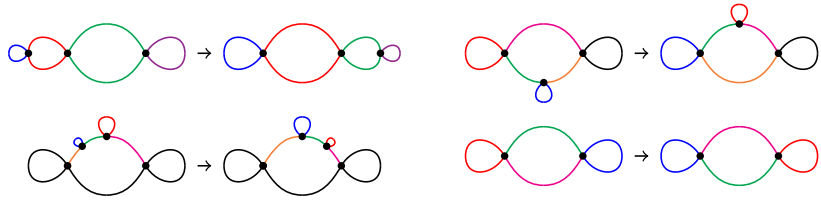

\section{The Airplane limit space and its rearrangements}\label{section_Airplane}

In this section we give a few definitions that are useful to work with the Airplane limit space, and in Subsection \ref{subsection_elements_TA} we exhibit five important elements of $T_A$ that will later turn out to generate the entire group.

\subsection{Components of the Airplane limit space}

Consider the Airplane replacement system depicted in Figure \ref{fig_replacement_A}. Note that each expansion of the base graph is a planar graph, thus it can be embedded in the Euclidean plane. When a blue edge is expanded, two new red edges are generated, and this pair of red edges encloses a connected region of the plane. These regions appear in each subsequent graph of the full expansion sequence, and then in the limit space, as exemplified in Figure \ref{fig_A_comp}. We call \textbf{component} any of these regions. The component colored in \textcolor{blue}{blue} in Figure \ref{fig_A_comp} is called \textbf{central component} and it is denoted by $C_0$.
It is not hard to see that elements of $T_A$ map components to components.

\begin{figure}\centering
\begin{overpic}[width=.65\textwidth]{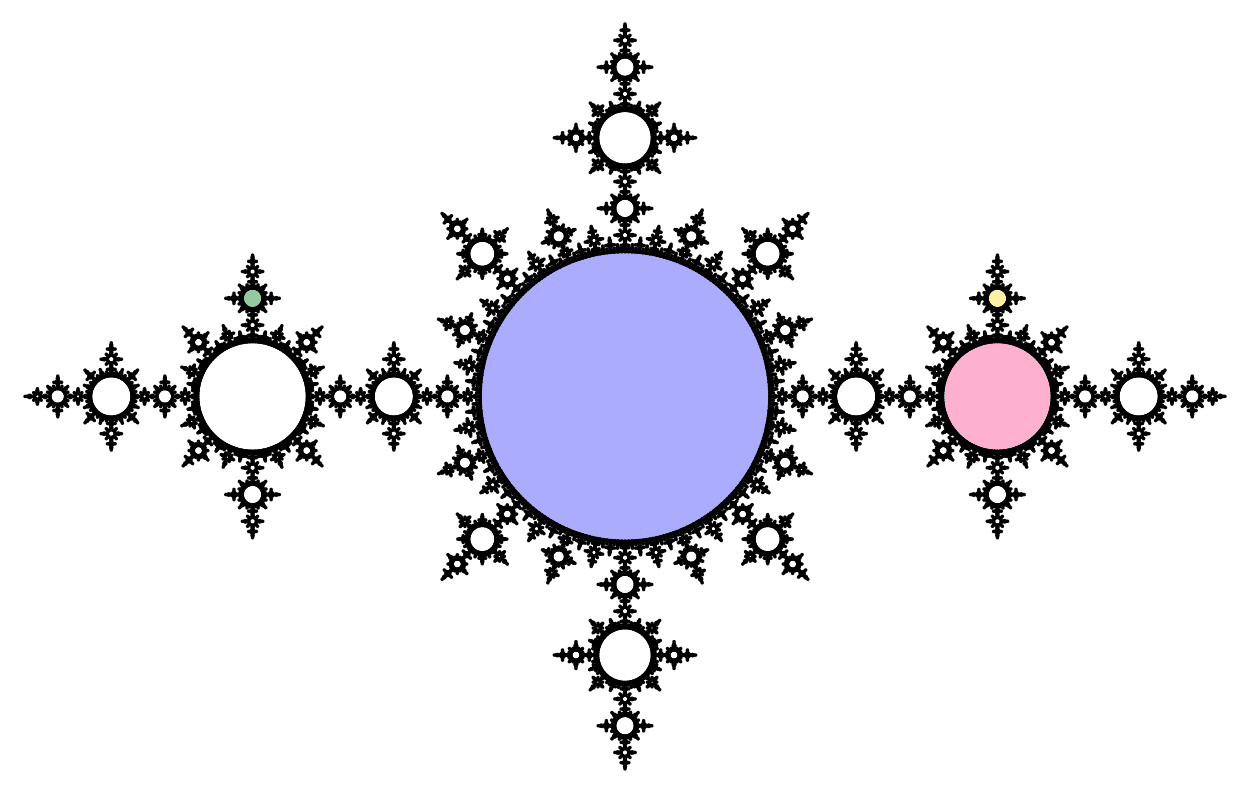}
    \put (48,31) {\textcolor{blue}{$C_0$}}
    \put (13.5,41.5) {\textcolor{ForestGreen}{$C_1$}}
    \put (82.5,22.5) {\textcolor{Magenta}{$C_2$}}
    \put (82,41.5) {\textcolor{YellowOrange}{$C_3$}}
\end{overpic}
\caption{Examples of components of the Airplane limit space.}
\label{fig_A_comp}
\end{figure}

Observe that the boundary $\partial C$ of any component $C$ is homeomorphic to $S^1 = [0,2\pi] / \{0,2\pi\}$, and there is a natural homeomorphism under which the dyadic angles of $S^1$ (i.e., elements of $2\pi \mathbb{Z}[\frac{1}{2}] / \{0,2\pi\} \subset S^1$) correspond to vertices between red edges. We will then refer to points of $\partial C$ as \textbf{angles} of $S^1$ under this homeomorphism, under counterclockwise orientation.

\subsection{Rays of the Airplane limit space}

We have already seen what blue cells look like in Figure \ref{fig_cells_A}. We say that a blue cell is a \textbf{primary blue cell} if it is maximal according to the set inclusion order. Note that these cells are adjacent to the central component, and that every other blue cell is included in exactly one of the primary blue cells.

Now, consider a blue edge $e$.
The corresponding cell must depart from a certain component $C$, and it identifies a segment departing perpendicularly at some dyadic angle of $\partial C$. When expanding $e$, the segment is divided in three parts: the central one is split into two red edges that make up the boundary of a component, and the two other parts result in new blue edges that lie on the segment. Expanding these blue edges, each half of the segment is broken again in a similar fashion. Expanding the red edges generate new blue edges departing from the associated component. Then the blue cell $\chi(e)$ is made up of the segment identified by $e$, after it has been split into the boundaries of the infinitely many components that lie on that segment, along with the infinitely many other blue cells departing from each of those components.

Consider the blue cell $\chi(e)$ and subtract from it its two extremes and all of the blue cells that it includes and that do not lie on the segment. We call \textbf{interval} the resulting set, and an example is given in Figure \ref{fig_A_ray}, denoted by $\textcolor{Green}{I}$. Note that intervals are quite similar to blue cells, but not the same: each interval corresponds to the segment associated to some blue edge $e$, after it has been split into the boundaries of the infinitely many components that lie on it, but it does not include anything else that departs from these components. In this sense, intervals follow a unique direction, while blue cells also expand from many points of the boundaries of the included components into new rays.

What we are really interested in are those intervals that are maximal with respect to the set inclusion order, which means that they represent an entire segment departing from a component. We call these \textbf{rays}, and if a ray departs from the central component we call it a \textbf{central ray}. Figure \ref{fig_A_ray} shows two examples of rays, \textcolor{blue}{$R_1$} and \textcolor{red}{$R_2$}, of which \textcolor{blue}{$R_1$} is a central ray. Note that, differently from intervals, each ray is uniquely determined by the component it departs from along with the angle it departs at. This idea will be explored further in Subsection \ref{subsection_component_paths}.

We also define the \textbf{right} and \textbf{left horizontal rays} as the central rays departing from the central component $C_0$ with angle $0$ and $\pi$, respectively, and we denote the right one by $R_0$. We then define $Hor$ (which stands for \textit{horizon}) as the union of the two horizontal rays and the boundary of the central component. $Hor$ and $C_0$ will have great importance in the study of $T_A$, as described in the next section.

Finally, given two components, we say that they are \textbf{related} if one lies on a ray departing from the other. For example, in Figure \ref{fig_A_comp} the component \textcolor{Magenta}{$C_2$} is related to both the components \textcolor{YellowOrange}{$C_3$} (small on the right) and \textcolor{blue}{$C_0$}, but these last two are not themselves related. Instead, the component \textcolor{ForestGreen}{$C_1$} (small on the left) is not related to any of the components colored in this picture.

\begin{figure}\centering
\begin{overpic}[width=.65\textwidth]{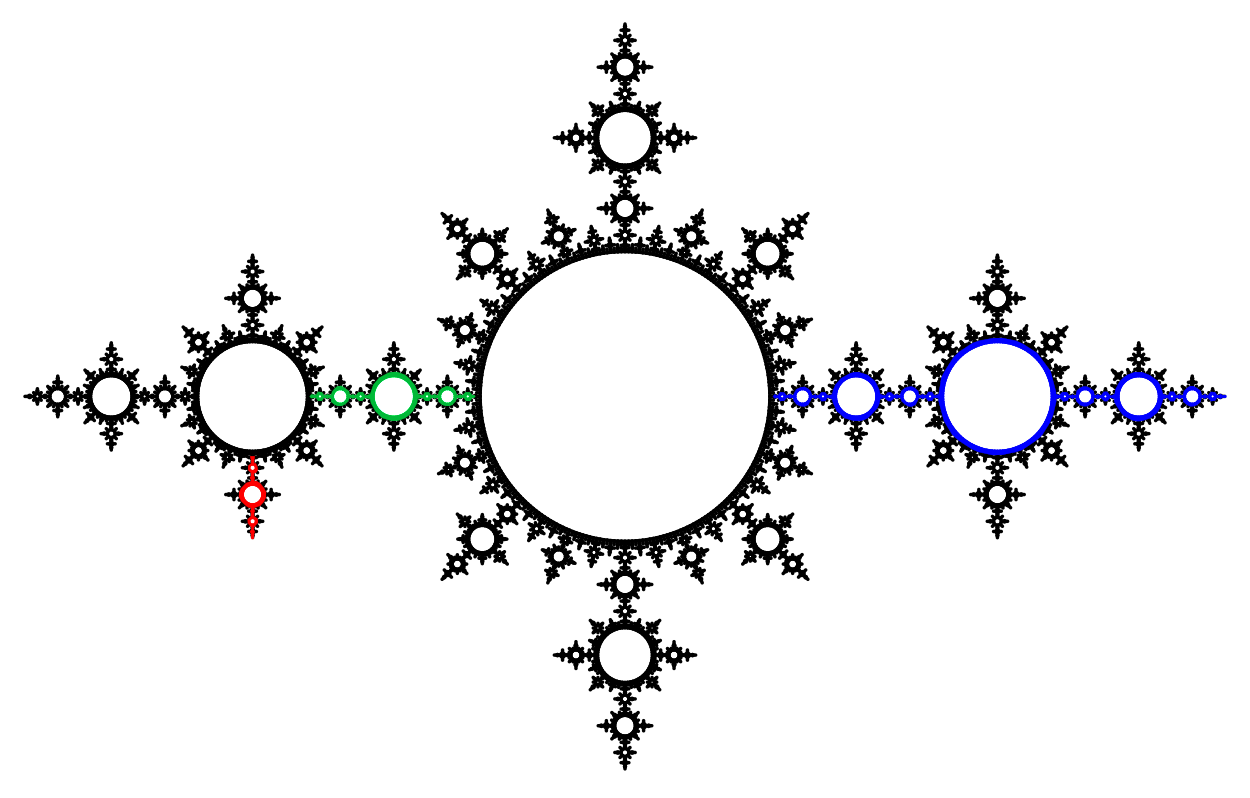}
    \put (84,22.5) {\textcolor{blue}{$R_1$}}
    \put (29,38.5) {\textcolor{Green}{$I$}}
    \put (13,20) {\textcolor{red}{$R_2$}}
\end{overpic}
\caption{Examples of an interval, highlighted in \textcolor{Green}{green} and two rays. The \textcolor{blue}{blue} ray is central, while the \textcolor{red}{red} one is not.}
\label{fig_A_ray}
\end{figure}

\subsection{\texorpdfstring{The elements of $T_A$}{The elements of TA}}\label{subsection_elements_TA}

Figure \ref{fig_generators} depict five specific elements of $T_A$: $\alpha, \beta, \gamma, \delta$ and $\varepsilon$. In this pictures, gray dotted lines show the action of these rearrangements. In Section \ref{section_generators_Airplane} we will show that these elements generate the entire group $T_A$.

\begin{figure}\centering
\begin{subfigure}[b]{.475\textwidth}\centering
\vspace*{10pt}
\begin{tikzpicture}[scale=.65]
    \draw[-latex,dotted,gray] (-1.05,0.2) to[out=45,in=150] (-0.4,0.4);
    \draw[-latex,dotted,gray] (0.4,0.4) to[out=30,in=120] (1.2,0.05);
    
    \draw[white] (0,-0.5) -- (0,-1);
    
    \draw[blue] (-2,0) -- node[midway,above]{\footnotesize A} (-1.5,0);
    \draw[red] (-1.25,0) circle (0.25);
    \draw[Green] (-1,0) -- (-0.5,0);
    \draw[Magenta] (0,0) circle (0.5);
    \draw[TealBlue] (0.5,0) -- (2,0);
    
    \draw[-to] (2.28,0) -- node[midway,above]{$\alpha$} (2.52,0);
    
    \draw[blue] (2.8,0) -- node[midway,above]{\footnotesize A} (4.3,0);
    \draw[red] (4.8,0) circle (0.5);
    \draw[Green] (5.3,0) -- (5.8,0);
    \draw[Magenta] (6.05,0) circle (0.25);
    \draw[TealBlue] (6.3,0) -- (6.8,0);
\end{tikzpicture}
\end{subfigure}
\begin{subfigure}[b]{.475\textwidth}\centering
\vspace*{10pt}
\begin{tikzpicture}[scale=.65]
    \draw[-latex,dotted,gray] (-0.05,-0.75) to[out=180,in=270] (-1.25,-0.05);
    \draw[-latex,dotted,gray] (-1.25,0.05) to[out=90,in=180] (-0.05,0.75);
    
    \draw[Magenta] (-2,0) -- (-0.5,0);
    \draw[Orange] (0.5,0) arc [radius=0.5, start angle=0, end angle=180];
    \draw[Green] (-0.5,0) arc [radius=0.5, start angle=180, end angle=270];
    \draw[blue] (0,-0.5) arc [radius=0.5, start angle=270, end angle=360];
    \draw (0.5,0) -- node[midway,above]{\footnotesize A} (2,0);
    \draw[red] (0,-0.5) -- (0,-1);
    
    \draw[-to] (2.28,0) -- node[midway,above]{$\beta$} (2.52,0);
    
    \draw[red] (2.8,0) -- (4.3,0);
    \draw[Orange] (5.3,0) arc [radius=0.5, start angle=0, end angle=90];
    \draw[Green] (4.8,0.5) arc [radius=0.5, start angle=90, end angle=180];
    \draw[blue] (4.3,0) arc [radius=0.5, start angle=180, end angle=360];
    \draw (5.3,0) -- node[midway,above]{\footnotesize A} (6.8,0);
    \draw[Magenta] (4.8,0.5) -- (4.8,1);
\end{tikzpicture}
\end{subfigure}\\
\begin{subfigure}[b]{.475\textwidth}\centering
\vspace*{10pt}
\begin{tikzpicture}[scale=.65]
    \draw[-latex,dotted,gray] (-0.53,0.53) arc [radius=0.75, start angle=135, end angle=95];
    \draw[-latex,dotted,gray] (0,0.75) arc [radius=0.75, start angle=90, end angle=45];
    
    \draw[-latex,dotted,white] (1,0) arc [radius=1, start angle=360, end angle=180];
    
    \draw (-2,0) -- node[midway,above]{\footnotesize A} (-0.5,0);
    \draw (-0.5,0) arc [radius=0.5, start angle=180, end angle=360];
    \draw (0.5,0) -- (2,0);
    \draw[Magenta] (0,0.5) -- (0,1);
    \draw[red] (-0.35,0.35) -- (-0.6,0.6);
    \draw[blue] (0.5,0) arc [radius=0.5, start angle=0, end angle=90];
    \draw[Green] (0,0.5) arc [radius=0.5, start angle=90, end angle=135];
    \draw[Plum] (-0.5,0) arc [radius=0.5, start angle=180, end angle=135];
    
    \draw[-to] (2.28,0) -- node[midway,above]{$\gamma$} (2.52,0);
    
    \draw (2.8,0) -- node[midway,above]{\footnotesize A} (4.3,0);
    \draw (4.3,0) arc [radius=0.5, start angle=180, end angle=360];
    \draw (5.3,0) -- (6.8,0);
    \draw[red] (4.8,0.5) -- (4.8,1);
    \draw[Magenta] (5.15,0.35) -- (5.4,0.6);
    \draw[blue] (5.3,0) arc [radius=0.5, start angle=0, end angle=45];
    \draw[Green] (4.8,0.5) arc [radius=0.5, start angle=90, end angle=45];
    \draw[Plum] (4.8,0.5) arc [radius=0.5, start angle=90, end angle=180];
\end{tikzpicture}
\end{subfigure}
\begin{subfigure}[b]{.475\textwidth}\centering
\vspace*{10pt}
\begin{tikzpicture}[scale=.65]
    \draw[-latex,dotted,gray] (-1,0) arc [radius=1, start angle=180, end angle=0];
    \draw[-latex,dotted,gray] (1,0) arc [radius=1, start angle=360, end angle=180];
    
    \draw[red] (-2,0) -- node[midway,above]{\footnotesize A} (-0.5,0);
    \draw[blue] (0.5,0) arc [radius=0.5, start angle=0, end angle=180];
    \draw[Green] (-0.5,0) arc [radius=0.5, start angle=180, end angle=360];
    \draw[Magenta] (0.5,0) -- (2,0);
    
    \draw[-to] (2.28,0) -- node[midway,above]{$\delta$} (2.52,0);
    
    \draw[Magenta] (2.8,0) -- (4.3,0);
    \draw[Green] (5.3,0) arc [radius=0.5, start angle=0, end angle=180];
    \draw[blue] (4.3,0) arc [radius=0.5, start angle=180, end angle=360];
    \draw[red] (5.3,0) -- node[midway,above]{\footnotesize A} (6.8,0);
\end{tikzpicture}
\end{subfigure}\\
\begin{subfigure}[b]{.475\textwidth}\centering
\vspace*{10pt}
\begin{tikzpicture}[scale=.65]
    \draw[-latex,dotted,gray] (0.75,0.1) to[out=75,in=105,looseness=1.625] (1.2,0.3);
    \draw[-latex,dotted,gray] (1.3,0.3) to[out=75,in=105,looseness=1.625] (1.75,0.05);
    
    \draw (-2,0) -- node[midway,above]{\footnotesize A} (-0.5,0);
    \draw (0,0) circle (0.5);
    \draw[Magenta] (0.5,0) -- (0.65,0);
    \draw[blue] (0.75,0) circle (0.1);
    \draw[Green] (0.85,0) -- (1,0);
    \draw[red] (1.25,0) circle (0.25);
    \draw[Plum] (1.5,0) -- (2,0);
    
    \draw[-to] (2.28,0) -- node[midway,above]{$\varepsilon$} (2.52,0);
    
    \draw (2.8,0) -- node[midway,above]{\footnotesize A} (4.3,0);
    \draw (4.8,0) circle (0.5);
    \draw[Magenta] (5.3,0) -- (5.8,0);
    \draw[blue] (6.05,0) circle (0.25);
    \draw[Green] (6.3,0) -- (6.45,0);
    \draw[red] (6.55,0) circle (0.1);
    \draw[Plum] (6.65,0) -- (6.8,0);
\end{tikzpicture}
\end{subfigure}
\caption{The five generators of $T_A$.}
\label{fig_generators}
\end{figure}
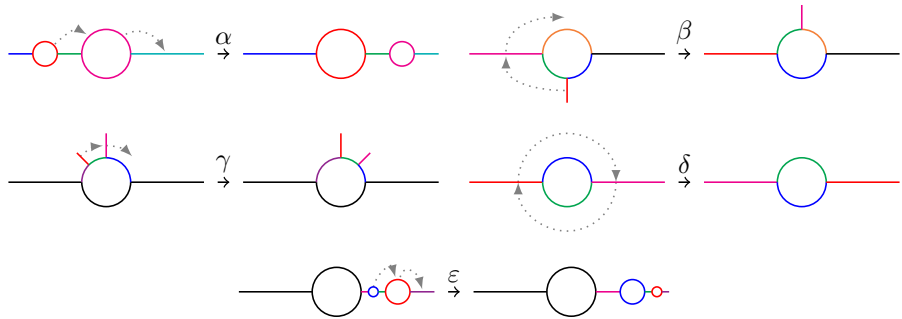

Those who are familiar with the Basilica rearrangement group $T_B$ studied in \cite{Belk_2015} might have noted that the first four of the five rearrangements we just introduced are very similar to the four rearrangements of the Basilica depicted in Figure \ref{fig_TB_generators}, which generate the entire $T_B$; this idea will be discussed in Section \ref{section_TB_in_TA}. The element $\varepsilon$ instead does not correspond to any element of $T_B$; we will see in Subsection \ref{subsection_F_in_TA} that it allows $T_A$ to act on rays as Thompson's group $F$ does on $(0,1)$.

\subsection{Component paths}\label{subsection_component_paths}

Consider the Airplane replacement system $\mathcal{A}$ (Figure \ref{fig_replacement_A}). We ignore the direction of the edges in the replacement system and give a new orientation as described below. Also, we identify each angle $2\pi k$ with the number $k$, which is the point of $S^1 = \frac{[0,1]}{\{0,1\}}$ that corresponds to the angle.

Note that, at each step of the full expansion sequence, rays are generated by halving red edges and components are generated by halving blue edges. This means that rays departing from a fixed component correspond to certain dyadic numbers, and the same holds for components lying on a fixed ray. In particular, we identify each ray with the unit interval $(0,1)$, where $0$ corresponds to the inner extreme, and we identify the border of each component with $S^1$, where the angle $0$ corresponds to the direction that needs to be taken to travel back towards the central component (or the right direction, if the component is the central one itself).

\begin{figure}[t]\centering
\includegraphics[width=.6\textwidth]{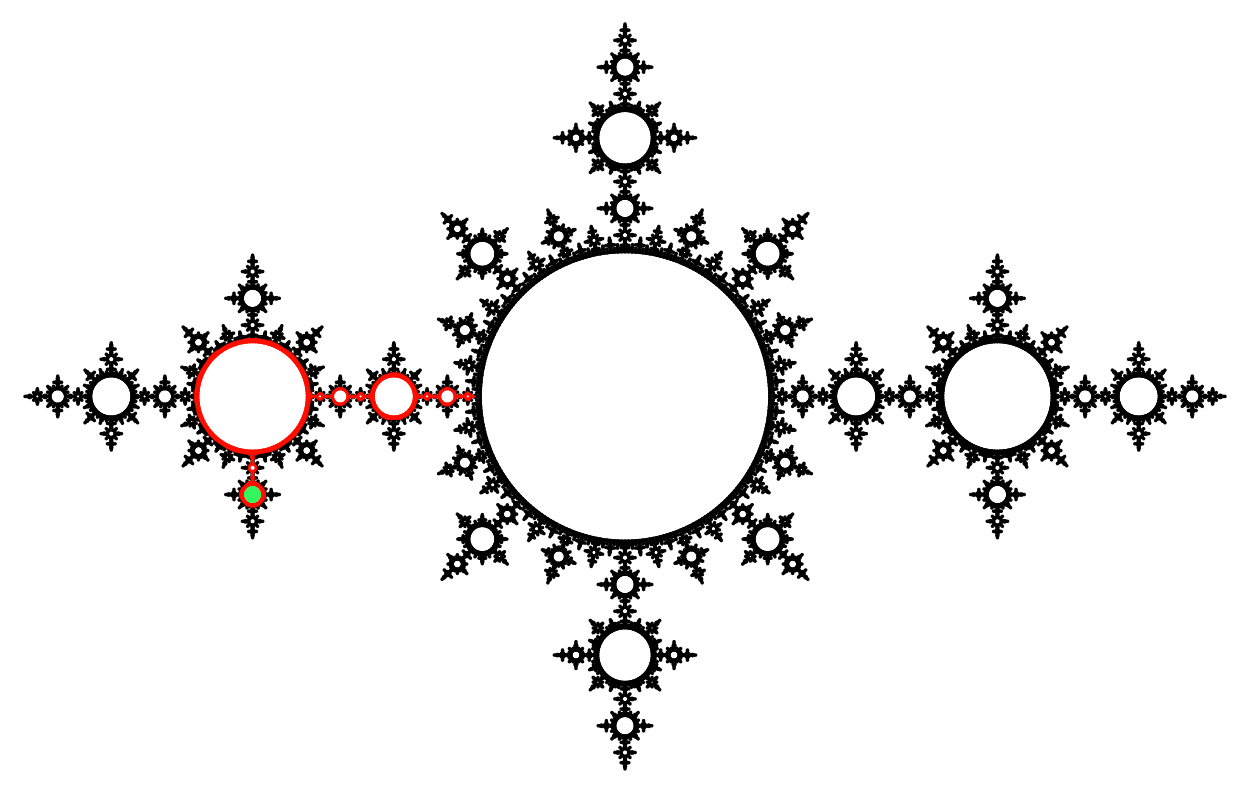}
\caption{The component path $\big( (\frac{1}{2},\frac{1}{2}), (\frac{3}{4},\frac{1}{2}) \big)$, highlighted in \textcolor{red}{red}, and the component that it identifies, highlighted in \textcolor{Green}{green}.}
\label{fig_component_path}
\end{figure}

Let $C$ be the component colored in green in Figure \ref{fig_component_path}. We can build a ``path'' of a finite amount of pairwise related components starting from the central one and ending at $C$, and it is unique if we assume that it is minimal. This path is depicted in red in Figure \ref{fig_component_path}, and it is specified by the following indications:
\begin{enumerate}
    \item departing from the central component at angle $\frac{1}{2}$, which identifies a ray;
    \item traveling on that ray for $\frac{1}{2}$ of its length, which identifies a component;
    \item departing from that component at angle $\frac{3}{4}$, which identifies a new ray;
    \item traveling on that ray for $\frac{1}{2}$ of its length, which identifies the component $C$.
\end{enumerate}
We then say that this path is identified by $\left( (\frac{1}{2}, \frac{1}{2}), (\frac{3}{4}, \frac{1}{2}) \right)$.

For each component $C$, there is a unique minimal path built like the one we just described, that goes from the central component $C_0$ to $C$. This path is identified by the list of components at which the path takes a turn, which is why we call such paths by the name \textbf{component paths}. Note that the empty path represents the central component.

When citing a component path, we will be referring to the component of $A$ that it identifies. We can represent the component path as the list $\left(C_1, C_2 \dots, C_n\right)$ of components at which the component path takes a turn. We can also use certain finite sequences of pairs of dyadic numbers: if $\big((\theta_1, l_1), (\theta_2, l_2), \dots , (\theta_n, l_n)\big)$ is such a sequence, then $\theta_1$ identifies the ray that departs from the central component at angle $2\pi \theta_1$, while $l_1$ identifies the component lying on the ray previously found that corresponds to the dyadic number $l_1$; the following pairs then work in the same way starting from the new component.

We also define the \textbf{depth} of a component in the Airplane limit space to be the number of non-central components in its component path, including itself unless it is the central component. Note that, if $\left(C_1, C_2 \dots, C_n\right)$ or $\big((\theta_1, l_1), (\theta_2, l_2), \dots , (\theta_n, l_n)\big)$ is a component path for the component $C=C_n$, then $n$ equals its depth.

\section{\texorpdfstring{Copies of Thompson's groups $F$ and $T$ in $T_A$}{Copies of Thompson's groups F and T in TA}}\label{section_thompson_in_TA}

In this section we exhibit two natural copies of Thompson's groups $T$ and $F$ contained in $T_A$ that will have great importance in studying the action of $T_A$.

We say that a rearrangement $f$ \textbf{extends canonically} on a cell $\chi(e)$ if it restricts to a canonical homeomorphism on $\chi(e)$, which intuitively means that $f$ maps $\chi(e)$ ``rigidly'' to some other cell of the same type.

\subsection{\texorpdfstring{A copy of Thompson's group $T$ in $T_A$}{A copy of Thompson's group T in TA}}\label{subsection_T_in_TA}

\begin{definition}
The \textbf{rigid stabilizer} of $C_0$, denoted by $rist(C_0)$, is the group of all elements of $T_A$ that map the component $C_0$ to itself and that extend canonically on the blue cells that depart from $C_0$.
\end{definition}
It is clear that this is a subgroup of $T_A$. The name “rigid” comes from the fact that an element of $rist(C_0)$ is determined entirely by its action on $\partial C_0$. Also note that we can equivalently define $rist(C_0)$ as the group of all elements of $T_A$ whose reduced graph pair diagrams have no component other that $C_0$.

\begin{theorem}\label{theorem_Airplane_rist_component}
$rist(C_0) = \langle \beta, \gamma, \delta \rangle \simeq T$, and it acts on $\partial C_0$ as $T$ does on $S^1$. In particular, its action is 2-transitive on the set of central rays.
\end{theorem}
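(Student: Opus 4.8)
The plan is to construct an explicit isomorphism $\Phi\colon rist(C_0)\to T$ given by restriction to the boundary $\partial C_0$, and then recognize $\beta,\gamma,\delta$ as a pulled-back set of standard generators of $T$. First I would make precise the identification $\partial C_0\simeq S^1$. Since the central component is bounded by (the limit of) red edges and the replacement rule for a red edge subdivides it into two red edges while sprouting a single blue edge from the new midpoint (Figure~\ref{fig_replacement_A}), the vertices that accumulate on $\partial C_0$ under expansion are exactly the dyadic points of $S^1$, and the central rays are precisely the blue cells anchored at these dyadic points. With the identification of Subsection~\ref{subsection_component_paths} (angle $0$ being the right horizontal ray $R_0$), an element $f\in rist(C_0)$ permutes the dyadic arcs of $\partial C_0$ and, mapping cells canonically, does so linearly between two dyadic subdivisions; hence $\Phi(f):=f|_{\partial C_0}$ is a dyadic rearrangement of $S^1$, i.e.\ an element of $T$. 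That $\Phi$ is a homomorphism is immediate, since restricting a composition to $\partial C_0$ is the composition of the restrictions.

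Next I would show $\Phi$ is a bijection. Injectivity is the ``rigid'' property built into the definition: an element of $rist(C_0)$ extends canonically on each blue cell departing from $C_0$, and each such cell is anchored at a dyadic point of $\partial C_0$; thus once the action on $\partial C_0$ is known, the canonical homeomorphism on every attached ray, and hence on all of the limit space, is forced, so $\Phi(f)=\mathrm{id}$ forces $f=\mathrm{id}$. For surjectivity I would start from an arbitrary $g\in T$, presented by a matched pair of dyadic subdivisions of $S^1$; I would realize these subdivisions as expansions of $C_0$ obtained purely by subdividing red edges (adding the corresponding central rays), and then extend $g$ canonically along each pair of matched rays. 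The resulting homeomorphism is a rearrangement lying in $rist(C_0)$ and restricting to $g$, so $\Phi(f)=g$.

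With $\Phi$ an isomorphism, generation reduces to a finite check. I would first confirm $\beta,\gamma,\delta\in rist(C_0)$ by inspecting their reduced graph pair diagrams in Figure~\ref{fig_generators}: each fixes $C_0$ and introduces no component other than $C_0$, acting only on $\partial C_0$ and the rays springing from it. I would then compute $\Phi(\beta),\Phi(\gamma),\Phi(\delta)$ and match them, through the identification above, with the standard generators $Y_0,Y_1,Y_2$ of $T$ from Figure~\ref{fig_T_gen} --- with $\delta$ the half-rotation $Y_2$ and $\beta,\gamma$ the two subdivision generators. Since $T=\langle Y_0,Y_1,Y_2\rangle$ and $\Phi$ is an isomorphism, it follows that $\langle\beta,\gamma,\delta\rangle=\Phi^{-1}(T)=rist(C_0)$.

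Finally, the $2$-transitivity is inherited from $T$. Central rays correspond bijectively to dyadic points of $\partial C_0\simeq S^1$, and $T$ acts $2$-transitively on the dyadic points of $S^1$: given two ordered pairs of distinct dyadic points, an orientation-preserving dyadic rearrangement carrying one pair to the other always exists, since a cyclic-order obstruction only appears for three or more points and is vacuous for two. Transporting this through $\Phi$ yields $2$-transitivity of $rist(C_0)$ on central rays. The main obstacle in all of this is pinning down $\Phi$ rigorously, in particular the surjectivity step, which requires translating a pair of dyadic subdivisions of $S^1$ into a genuine graph pair diagram of $C_0$-expansions and verifying that the canonical extension along the rays is consistent and produces an honest rearrangement; the injectivity is essentially the definition of rigidity, and identifying $\beta,\gamma,\delta$ with $Y_0,Y_1,Y_2$ is a direct diagram computation.
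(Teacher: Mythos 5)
Your proposal is correct and follows essentially the same route as the paper, which simply defers to the analogous argument of Theorem 6.3 and Corollary 6.4 of \cite{Belk_2015}: restriction to $\partial C_0$ gives an isomorphism onto $T$ (injective by rigidity, surjective by realizing pairs of dyadic subdivisions of $S^1$ as red-edge expansions with canonically matched rays), and $\beta,\gamma,\delta$ restrict to $Y_0,Y_1,Y_2$. You have merely written out the details the paper leaves to the citation, so there is nothing to add.
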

The proof of this Theorem is essentially the same as the ones of Theorem 6.3 and Corollary 6.4 from \cite{Belk_2015}, with the proper definitions. It is important to note that $\beta, \gamma$ and $\delta$ (Figure \ref{fig_generators}) act on $\partial C_0$ exactly as the three generators $Y_0$, $Y_1$ and $Y_2$ of Thompson's group $T$ (Figure \ref{fig_T_gen}) act on $S^1$.

\subsection{\texorpdfstring{A copy of Thompson's group $F$ in $T_A$}{A copy of Thompson's group F in TA}}\label{subsection_F_in_TA}

\begin{definition}
The \textbf{rigid stabilizer} of $Hor$, denoted by $rist(Hor)$, is the group of all elements of $T_A$ that map the horizon $Hor$ to itself and that extend canonically on the red cells that make up components lying on $Hor$.
\end{definition}

It is clear that this is a subgroup of $T_A$. The name ``rigid'' comes from the fact that an element of $rist(Hor)$ is determined entirely by its action on $Hor$. Also note that we can equivalently define $rist(Hor)$ as the group of all elements of $T_A$ whose reduced graph pair diagrams have no ray other than the two horizontal ones.

\begin{theorem}\label{theorem_Airplane_rist_hor}
$rist(Hor) = \langle \alpha, \varepsilon \rangle \simeq F$, and it acts on $Hor$ as $F$ foes on $[0,1]$. In particular, its action is transitive on the set of components lying on the horizon.
\end{theorem}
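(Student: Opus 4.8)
The plan is to set up, in parallel with Theorem~\ref{theorem_Airplane_rist_component}, an explicit isomorphism onto $F$ coming from the action on $Hor$, the only difference being that the circle $\partial C_0 \cong S^1$ of the circular case is replaced by the interval structure carried by the horizon. First I would fix the dictionary between the horizon and the dyadic interval $[0,1]$: reading $Hor$ as a line from the outer extreme of the left horizontal ray to the outer extreme of the right one, the two outer extremes correspond to $0$ and $1$, the central component $C_0$ corresponds to the midpoint $\tfrac12$, and the remaining components lying on $Hor$ correspond to the dyadic points of $(0,1)$, the left ray giving $(0,\tfrac12)$ and the right ray giving $(\tfrac12,1)$ as in Subsection~\ref{subsection_component_paths}. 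Crucially, expanding a blue edge along $Hor$ -- which creates a new component at the midpoint of a segment -- corresponds precisely to halving a standard dyadic interval, so expansions of $Hor$ are matched with dyadic subdivisions of $[0,1]$.

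With this dictionary, I would define a map $\Phi\colon rist(Hor)\to F$ as follows. By the equivalent description of $rist(Hor)$, an element $f$ has a reduced graph pair diagram whose only rays are the two horizontal ones; hence its domain and range graphs differ from the base graph only by subdivisions of blue edges along $Hor$, that is, by a pair of dyadic subdivisions of $[0,1]$ with the same number of pieces. The graph isomorphism underlying $f$ then matches the pieces of the two subdivisions, and since it is built from canonical homeomorphisms of cells -- which are unique and orientation-preserving -- this matching is order-preserving. Thus it determines a dyadic rearrangement of $[0,1]$, which I declare to be $\Phi(f)$; the assignment is a homomorphism because composing rearrangements composes the associated dyadic rearrangements.

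Injectivity of $\Phi$ is exactly the rigidity observed after the definition: since $f$ extends canonically on the red cells of every component lying on $Hor$, it is determined by its restriction to $Hor$, so $\ker\Phi$ is trivial. For surjectivity I would check directly from Figure~\ref{fig_generators} that $\alpha,\varepsilon\in rist(Hor)$ and that, under the dictionary above, they act on $Hor$ exactly as the standard generators $X_0, X_1$ of $F$ (Figure~\ref{fig_F_gen}) act on $[0,1]$; for instance $\alpha$ sends the components located at $\tfrac14$ and $\tfrac12$ (the latter being $C_0$) to those at $\tfrac12$ and $\tfrac34$, exactly as $X_0$ does. Since $\langle X_0,X_1\rangle = F$, the restriction of $\Phi$ to $\langle\alpha,\varepsilon\rangle$ is already surjective. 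Combined with injectivity, this forces $\langle\alpha,\varepsilon\rangle = rist(Hor)$ and shows $\Phi$ is an isomorphism $rist(Hor)\xrightarrow{\sim}F$, which is also the precise sense in which $rist(Hor)$ acts on $Hor$ as $F$ acts on $[0,1]$. The transitivity on components is then immediate, since $F$ acts transitively on the dyadic points of $(0,1)$, which are exactly the components lying on $Hor$.

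I expect the main obstacle to be the careful verification that $\Phi$ lands in $F$ rather than in the larger group of order-preserving homeomorphisms of $[0,1]$ fixing the dyadic points setwise: one must confirm that the breakpoints produced are dyadic and the local scaling factors are powers of $2$, which is where the rigidity of canonical homeomorphisms and the exact matching of expansions with dyadic subdivisions do the work. Everything else is bookkeeping parallel to the circular case of Theorem~\ref{theorem_Airplane_rist_component}, so once the dictionary between expansions of $Hor$ and dyadic subdivisions of $[0,1]$ is pinned down the argument is formal.
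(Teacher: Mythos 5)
Your proposal is correct and follows essentially the same route as the paper, which only sketches the argument by deferring to the analogous Theorem~\ref{theorem_Airplane_rist_component} and to Theorem 6.3 and Corollary 6.4 of Belk--Forrest: identify $Hor$ with $[0,1]$ so that components lying on $Hor$ become dyadic points, use rigidity (an element of $rist(Hor)$ is determined by its action on $Hor$) to get injectivity of the induced map to $F$, and observe that $\alpha$ and $\varepsilon$ act as $X_0$ and $X_1$ to get surjectivity and hence the equality $rist(Hor)=\langle\alpha,\varepsilon\rangle$. Your write-up supplies the details the paper omits, and the concluding deduction of transitivity from the transitivity of $F$ on dyadic points of $(0,1)$ matches the paper's intent.
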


The proof of this Theorem is very similar to the one of Theorem \ref{theorem_Airplane_rist_component}, itself similar to the ones of Theorem 6.3 and Corollary 6.4 from \cite{Belk_2015}. Note that the elements $\alpha$ and $\varepsilon$ (Figure \ref{fig_generators}) act on $Hor$ exactly as the two generators $X_0$ and $X_1$ of Thompson's group $F$ (Figure \ref{fig_F_gen}) act on $[0,1]$.

\medskip
In a similar manner we can study the subgroup $\langle \varepsilon, \varepsilon^{\alpha^{-1}} \rangle$ of $rist(Hor)$, which we denote by $rist(R_0)$. The two generators act on the right horizontal ray $R_0$ as the generators $X_0$ and $X_1$ of Thompson's group $F$ act on $[0,1]$, so we have:
\begin{proposition}\label{proposition_rist_ray}
$rist(R_0) \simeq F$, and it acts on $R_0$ as $F$ does on $[0,1]$. In particular, its action is transitive on the set of components lying on $R_0$.
\end{proposition}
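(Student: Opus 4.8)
The plan is to mirror the proof of Theorem~\ref{theorem_Airplane_rist_hor}, exploiting the self-similarity of the ray $R_0$. First I would fix the identification of $R_0$ with $[0,1]$ coming from Subsection~\ref{subsection_component_paths}, under which $0$ is the inner extreme, $1$ is the tip, and the components lying on $R_0$ correspond to the dyadic points of $(0,1)$. Since $\varepsilon$ and $\varepsilon^{\alpha^{-1}}$ both map $R_0$ to itself, so does every element of $rist(R_0)$, and restricting such an element to $R_0$ yields an orientation-preserving dyadic piecewise-linear homeomorphism of $R_0 \cong [0,1]$, that is, an element of $F$. This defines a homomorphism $\rho \colon rist(R_0) \to F$. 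By the computation recorded just before the statement, $\rho(\varepsilon) = X_0$ and $\rho(\varepsilon^{\alpha^{-1}}) = X_1$, and since $X_0, X_1$ generate $F$ the map $\rho$ is surjective.

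It then remains to show that $\rho$ is injective, i.e. that the action of $rist(R_0)$ on $R_0$ is faithful. Here I would use that $rist(R_0) \le rist(Hor)$ and that $rist(Hor)$ acts faithfully on $Hor$ by Theorem~\ref{theorem_Airplane_rist_hor}. The generator $\varepsilon$ fixes $Hor \setminus R_0$ pointwise, and since $\alpha$ maps the region $R_0$ into itself, the conjugate $\varepsilon^{\alpha^{-1}} = \alpha\varepsilon\alpha^{-1}$ is supported on $\alpha(R_0) \subseteq R_0$ and hence also fixes $Hor \setminus R_0$ pointwise. Consequently every element of $rist(R_0)$ fixes $Hor \setminus R_0$ pointwise, so its action on $Hor$ is completely determined by its action on $R_0$; an element acting trivially on $R_0$ is then trivial on all of $Hor$ and therefore equal to the identity. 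This gives $rist(R_0) \simeq F$ acting on $R_0$ as $F$ does on $[0,1]$.

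A cleaner alternative is to argue entirely inside the copy $rist(Hor) \simeq F$: under the isomorphism of Theorem~\ref{theorem_Airplane_rist_hor} one has $\alpha \leftrightarrow X_0$ and $\varepsilon \leftrightarrow X_1$, the ray $R_0$ corresponds to the standard dyadic interval $[\tfrac12,1]$ of $Hor \cong [0,1]$ on which $\varepsilon$ is supported, and $rist(R_0)$ corresponds to $\langle X_1,\, X_0 X_1 X_0^{-1}\rangle \le F$. This is precisely the subgroup of $F$ consisting of the elements supported on $[\tfrac12,1]$, which is isomorphic to $F$ by the standard self-similarity of Thompson's group: restriction to $[\tfrac12,1] \cong [0,1]$ sends $X_1 \mapsto X_0$ and $X_0 X_1 X_0^{-1} \mapsto X_1$, recovering the same conclusion.

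Finally, the transitivity statement follows from the isomorphism just established together with the well-known fact that $F$ acts transitively on the dyadic points of the open interval $(0,1)$; under the correspondence of Subsection~\ref{subsection_component_paths} these dyadic points are exactly the components lying on $R_0$. The main obstacle is the faithfulness step, and specifically the need to verify that both generators genuinely fix $Hor \setminus R_0$ pointwise, since the whole injectivity argument rests on this observation (note that $\alpha$ itself does not fix $Hor \setminus R_0$, so one really must invoke that conjugation by $\alpha$ keeps the support of $\varepsilon$ inside $R_0$). Everything else is either a direct translation of Theorem~\ref{theorem_Airplane_rist_hor} or a standard property of $F$.
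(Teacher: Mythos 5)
Your proposal is correct and takes essentially the same route as the paper: the paper's own (very terse) justification is precisely that $\varepsilon$ and $\varepsilon^{\alpha^{-1}}$ act on $R_0$ as $X_0$ and $X_1$ act on $[0,1]$, with injectivity coming from the rigidity of $rist(Hor)$ (its elements are determined by their action on $Hor$), which is exactly what your restriction homomorphism $\rho$ together with the support argument for $\varepsilon^{\alpha^{-1}} = \alpha \circ \varepsilon \circ \alpha^{-1}$ makes explicit. Your alternative argument, identifying $rist(R_0)$ inside $rist(Hor) \simeq F$ with the subgroup $\langle X_1, X_0 X_1 X_0^{-1} \rangle$ of elements supported on $[\tfrac{1}{2},1]$ and invoking the self-similarity of $F$, is also valid and is a clean repackaging of the same facts.
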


We will make use of this subgroup $rist(R_0)$ in the proofs of Lemma \ref{lemma_TA_transitive} and Theorem \ref{theorem_commutator_TA_finitely_generated}.

\section{\texorpdfstring{Generators of $T_A$}{Generators of TA}}\label{section_generators_Airplane}

In this section we prove that $\alpha, \beta, \gamma, \delta$ and $\varepsilon$ (depicted in Figure \ref{fig_generators}) generate $T_A$.

\begin{lemma}\label{lemma_TA_transitive}
$\langle \alpha, \beta, \gamma, \delta, \varepsilon \rangle$ acts transitively on the set of components of $A$.
\end{lemma}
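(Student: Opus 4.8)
The plan is to show that every component of $A$ lies in the orbit of the central component $C_0$ under $G := \langle \alpha, \beta, \gamma, \delta, \varepsilon \rangle$; since lying in a common $G$-orbit is an equivalence relation, this immediately yields transitivity on the full set of components. I would prove the orbit claim by induction on the depth of a component, using the component path description from Subsection \ref{subsection_component_paths} together with the two copies of Thompson's groups already located inside $G$.

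First, the base case: the unique component of depth $0$ is $C_0$ itself, which is trivially in its own orbit. For the inductive step, let $C$ be a component of depth $n \ge 1$ with component path $\big((\theta_1, l_1), \dots, (\theta_n, l_n)\big)$. Its ``parent'' $C'$, namely the component with path $\big((\theta_1, l_1), \dots, (\theta_{n-1}, l_{n-1})\big)$, has depth $n-1$, and $C$ lies on a ray departing from $C'$. By the inductive hypothesis there is $g \in G$ with $g(C') = C_0$. Since every element of $T_A$ maps components to components and preserves the relation of one component lying on a ray departing from another, $g(C)$ lies on a ray departing from $g(C') = C_0$; that is, $g(C)$ is a component of depth $1$ lying on some central ray $r$.

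It therefore remains to send an arbitrary depth-$1$ component to $C_0$, and this is where the two Thompson subgroups come in. By Theorem \ref{theorem_Airplane_rist_component}, the subgroup $rist(C_0) = \langle \beta, \gamma, \delta\rangle \le G$ acts on the central rays as $T$ acts on $S^1$, in particular transitively; choosing $h \in rist(C_0)$ with $h(r) = R_0$ and recalling that elements of $rist(C_0)$ extend canonically on the blue cells departing from $C_0$ (so they carry a component at a given position on $r$ to the component at the same position on $R_0$), we get that $h\big(g(C)\big)$ is a component lying on the right horizontal ray $R_0 \subseteq Hor$. Finally, by Theorem \ref{theorem_Airplane_rist_hor}, the subgroup $rist(Hor) = \langle \alpha, \varepsilon\rangle \le G$ acts transitively on the components lying on the horizon, a set containing both $h\big(g(C)\big)$ and $C_0$; hence there is $k \in rist(Hor)$ with $k\big(h(g(C))\big) = C_0$. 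Then $khg \in G$ sends $C$ to $C_0$, completing the induction.

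The routine parts are the induction bookkeeping and the observation that rearrangements preserve incidence between components and rays. The one point that needs genuine care is the final move from $R_0$ to $C_0$: I must check that $C_0$ really counts among the ``components lying on the horizon'' for the transitive $F$-action of Theorem \ref{theorem_Airplane_rist_hor} — equivalently, that under the identification of $Hor$ with $[0,1]$ the central component occupies an interior (non-endpoint) position, so that $F$-transitivity can reach it starting from a component on $R_0$. Granting this, the argument is complete; the essential content of the lemma is the reduction of the general case to depth $1$, after which the transitivity statements already established for the copies of $T$ and $F$ do all the work.
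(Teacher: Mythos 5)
Your proof is correct and follows essentially the same strategy as the paper's: induction on the depth of a component, using the transitivity of $rist(C_0)\simeq T$ on central rays to move onto the right horizontal ray and then a copy of $F$ to reach $C_0$. The only (immaterial) differences are that you apply the inductive hypothesis to the parent component before handling the resulting depth-$1$ case, whereas the paper first sends the initial component $C_1$ of the path to $C_0$ and then invokes the hypothesis, and that you use the transitivity of $rist(Hor)$ on horizon components (under which $C_0$ sits at the interior dyadic $1/2$, so your flagged concern is resolved by Theorem~\ref{theorem_Airplane_rist_hor}) where the paper uses $rist(R_0)$ followed by $\alpha^{-1}$.
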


\begin{proof}
Let $C_n$ be any component of depth $n$. We will show that $C_n$ can be mapped to the central component by an element of $\langle \alpha, \beta, \gamma, \delta, \varepsilon \rangle$ by induction on $n$.

If $n = 0$ then $C_n$ is itself the central component and we are done. Otherwise, $C_n$ is connected to the central component $C_0$ by a component path $(C_0,\, C_1,\dots,\, C_n)$. Consider the component $C_1$, which is related to the central component, thus it lies on some central ray $R$. Because of Theorem \ref{theorem_Airplane_rist_component}, there exists an element $f \in \langle \beta, \gamma, \delta \rangle$ for which $f(R)$ is the right horizontal ray. Then $f(C_1)$ must be a component lying on the right horizontal ray and so, because of Proposition \ref{proposition_rist_ray}, there is a $g \in \langle \varepsilon, \varepsilon^{\alpha^{-1}} \rangle$ such that $(g \circ f) (C_1)$ is the component in the middle of the right horizontal ray. Then clearly $(\alpha^{-1} \circ g \circ f) (C_1)$ is the central component $C_0$.

It is easy to see that $((\alpha^{-1} \circ g \circ f) (C_1), \dots , \, (\alpha^{-1} \circ g \circ f) (C_n))$ is a component path, hence $(\alpha^{-1} \circ g \circ f) (C_n)$ has depth $n-1$. By our induction hypothesis, $(\alpha^{-1} \circ g \circ f) (C_n)$ can be mapped to $C_0$ by $\langle \alpha, \beta, \gamma, \delta, \varepsilon \rangle$, thus $C_n$ can as well.
\end{proof}

We will now prove that these five elements generate the entire $T_A$. Recall that, if $f$ and $g$ are rearrangements, in order to compute their composition $f \circ g$ we need to expand both their graph pair diagrams so that the range graph for $g$ is the same as the domain graph for $f$.

\begin{theorem}\label{theorem_TA_generators}
The group $T_A$ is generated by the elements $\{\alpha, \beta, \gamma, \delta, \varepsilon \}$.
\end{theorem}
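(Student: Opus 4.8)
The plan is to show that every $f \in T_A$ lies in $G := \langle \alpha, \beta, \gamma, \delta, \varepsilon\rangle$ by a sequence of reductions that replace $f$ with a simpler element after multiplying by elements of $G$, culminating in an induction on the number of components appearing in the reduced graph pair diagram. First I would invoke Lemma~\ref{lemma_TA_transitive}: since $G$ acts transitively on components, there is $g \in G$ with $g\big(f(C_0)\big) = C_0$, so after replacing $f$ by $gf$ we may assume $f(C_0) = C_0$. Because $f$ is a rearrangement fixing $C_0$, its restriction to $\partial C_0$ respects the dyadic cellular structure on $\partial C_0 \cong S^1$ and is therefore an element of Thompson's group $T$; by Theorem~\ref{theorem_Airplane_rist_component} there is $t \in rist(C_0) = \langle\beta,\gamma,\delta\rangle \le G$ with $t|_{\partial C_0} = f|_{\partial C_0}$, and replacing $f$ by $t^{-1}f$ we may further assume that $f$ fixes $\partial C_0$ pointwise. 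This is the key normal form: $f$ now preserves every central ray and every primary blue cell departing from $C_0$.

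Next, since the reduced diagram of $f$ is finite, $f$ is the identity outside finitely many primary blue cells, so it factors as a finite product $f = \prod_P f_P$ of rearrangements with disjoint supports, one for each primary blue cell $P$; it therefore suffices to show each $f_P \in G$. Using the transitivity of $rist(C_0)$ on central rays from Theorem~\ref{theorem_Airplane_rist_component}, I would conjugate $f_P$ by an element of $rist(C_0) \le G$ so that its support becomes the primary blue cell containing the right horizontal ray $R_0$; conjugating by $rist(C_0)$ preserves the property of fixing $\partial C_0$ pointwise. The resulting element acts on $R_0$ as a dyadic homeomorphism fixing both endpoints, hence as an element of $F$, so by Proposition~\ref{proposition_rist_ray} there is $h \in rist(R_0) \le G$ agreeing with it on $R_0$; since $rist(R_0)$ fixes $\partial C_0$ and everything off $R_0$, composing with $h^{-1}$ produces an element that fixes $R_0$ pointwise while still fixing $\partial C_0$ pointwise.

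Such an element fixes the two points where each component $D$ lying on $R_0$ meets $R_0$, hence preserves each $D$, so it again factors as a finite product of rearrangements supported on the structures hanging off the components of $R_0$. Each such factor is, via transitivity applied inside $G$, conjugate to a rearrangement based at $C_0$ of the same kind but whose reduced diagram involves strictly fewer components, which closes the induction, the base case of a single component $C_0$ being exactly $rist(C_0) \le G$. The main obstacle is precisely this self-similar recursion: I must pin down a complexity measure (the number of components in the reduced diagram) that provably drops at each peeling step, and verify carefully that a local rearrangement of the substructure attached to a deeper component $D$ can be carried by a $G$-conjugacy to a genuine rearrangement of smaller complexity, matching up the ray that leads back toward $C_0$ with a horizontal direction of $\partial C_0$. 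The remaining points requiring care are the two dyadicity claims, namely that $f|_{\partial C_0}$ lies in $T$ and that the ray action lies in $F$, together with the verification that $rist(C_0)$ and $rist(R_0)$ fix the relevant boundaries pointwise, so that the successive reductions genuinely compose.
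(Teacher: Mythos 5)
Your opening reductions match the paper's proof: Lemma~\ref{lemma_TA_transitive} to arrange $f(C_0)=C_0$, Theorem~\ref{theorem_Airplane_rist_component} to arrange that $f$ fixes $\partial C_0$ pointwise, and the factorization of $f$ over the finitely many primary blue cells on which it is nontrivial (this is the paper's Case~1). The gap comes next. After conjugating a factor so that its support is the right primary blue cell, you assert that it ``acts on $R_0$ as a dyadic homeomorphism fixing both endpoints'' and cancel it against an element of $rist(R_0)$ via Proposition~\ref{proposition_rist_ray}. But such a factor need not map $R_0$ to itself at all. Take a component $D$ lying on $R_0$: a rearrangement can fix $\partial C_0$ and the inner portion of $R_0$ pointwise, preserve $D$, and act on $\partial D$ as a nontrivial element of $T$ that fixes the inward attachment point (angle $0$) but moves angle $1/2$; it then carries the blue cell attached to $D$ at angle $1/2$ --- the continuation of $R_0$ --- onto a blue cell attached at a different angle, so the outer part of $R_0$ is sent off $R_0$ entirely. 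For such a factor there is no element of $rist(R_0)$ (whose elements do preserve $R_0$) agreeing with it on $R_0$, so the cancellation step, and everything built on it (fixing $R_0$ pointwise, preserving each component on $R_0$), collapses. The paper's Case~2 is engineered around exactly this: it never claims $R_0$ is preserved, only that the innermost component $C_D$ on $R_0$ in the domain graph maps to the innermost component $C_R$ in the range graph (because the blue edge containing the departure point must map to its counterpart); it then normalizes both to position $1/2$ by powers of $\varepsilon$ and conjugates by $\alpha$, which turns $C_0$ into a reducible component and strictly drops the component count.

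The second problem is one you flag yourself but leave open, and it is not a side issue: your peeling recursion conjugates a factor supported over a deeper component $D$ by an element of $G$ taking $D$ to $C_0$, obtained from bare transitivity, and claims that the number of non-central components in the reduced diagram strictly decreases. Conjugation by a general element of $T_A$ can \emph{increase} that number, since to transport a graph pair diagram through a conjugation one must first expand it until the conjugator acts canonically on all of its cells. The paper's induction only ever conjugates by elements that visibly act canonically on the cells involved --- $\sigma \in rist(C_0)$ on the primary blue cells, powers of $\varepsilon$, and $\alpha$ after the $\varepsilon$-normalization --- and this is precisely what makes the component count a usable complexity measure. Without an argument of this kind, your induction does not close.
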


\begin{proof}
Let $f \in T_A$. By the previous lemma, up to replacing $f$ by $g \circ f$ for a suitable $g \in \langle \alpha, \beta, \gamma, \delta, \varepsilon \rangle$, we may assume that $f(C_0) = C_0$. We must show that $f \in \langle \alpha, \beta, \gamma, \delta, \varepsilon \rangle$. We proceed by induction on the number $n$ of non-central components in the reduced graph pair diagram for $f$.

The base graph for the Airplane does not contain any non-central component, so the base case is $n=0$. If $n=0$, then $f \in rist(C_0)$, which is generated by $\beta, \gamma$ and $\delta$ because of Theorem \ref{theorem_Airplane_rist_component}, therefore $f$ belongs to $\langle \alpha, \beta, \gamma, \delta, \varepsilon \rangle$.

Suppose that $n \geq 1$. Since $f(C_0) = C_0$, the action of $f$ permutes the points of departure of central rays from $C_0$, which correspond precisely to dyadic points of $S^1$. Because of Theorem \ref{theorem_Airplane_rist_component}, there exists an element $g \in \langle \beta, \gamma, \delta \rangle$ that permutes the points of departure of the central rays in the same way as $f$. Then the composition $h := g^{-1} \circ f$ fixes each of these points. It suffices to prove that $h \in \langle \alpha, \beta, \gamma, \delta, \varepsilon \rangle$.

Since $g^{-1}$ does not have any non-central component in the domain graph for its reduced graph pair diagram, $h$ has exactly $n$ non-central components in its reduced graph pair diagram. Now, call $p_1, \dots,\, p_m$ the points of departure of the central rays from $C_0$ in the reduced graph pair diagram for $h$. The right horizontal ray $R_0$ appears in every expansion of $\mathcal{A}$, so we can assume that $p_1$ is the point of adjacency between $C_0$ and $R_0$. Note that the component path for each non-central component travels through one and only one of these $p_i$. We now distinguish two cases:

\smallskip
\textit{\textbf{Case 1}:} Suppose that the reduced graph pair diagram for $h$ has non-central components with component paths that travel through more than one $p_i$ (as in Figure \ref{fig_A_diagram1}). Then we can express $h$ as a composition $h_1 \circ  \cdots \circ h_m$, where each $h_i$
has a graph pair diagram obtained from the reduced graph pair diagram for $h$ by removing all non-central components except for those with component paths that travel through $p_i$ (along with every ray that departs from the components removed this way). For example, Figure \ref{fig_A_diagram2} depicts $h_1$ and $h_2$ for the $h$ in Figure \ref{fig_A_diagram1}. Then each $h_i$ must have fewer than $n$ non-central components. By induction, it follows that each $h_i \in \langle \alpha, \beta, \gamma, \delta, \varepsilon \rangle$, and therefore $h \in \langle \alpha, \beta, \gamma, \delta, \varepsilon \rangle$ as well.

\begin{figure}[b]\centering
\begin{tikzpicture}[scale=1]
\draw (-2,0) -- (-0.5,0);
\draw (0,0) circle (0.5);
\draw (0.5,0) -- (0.65,0);
\draw (0.75,0) circle (0.1);
\draw (0.85,0) -- (1,0);
\draw (1.25,0) circle (0.25);
\draw (1.5,0) -- (2,0);

\draw (0,0.5) -- (0,1);
\draw (-0.225,0.75) -- (-0.1,0.75);
\draw[fill=white] (0,0.75) circle (0.1);

\node[blue] at (0.5,0) [circle,fill,inner sep=1.25]{};
\node[red] at (0,0.5) [circle,fill,inner sep=1.25]{};

\draw[-to] (2.28,0) -- node[midway,above]{$h$} (2.52,0);

\draw (2.8,0) -- (4.3,0);
\draw (4.8,0) circle (0.5);
\draw (5.3,0) -- (5.8,0);
\draw (6.05,0) circle (0.25);
\draw (6.3,0) -- (6.45,0);
\draw (6.55,0) circle (0.1);
\draw (6.65,0) -- (6.8,0);
\draw (4.8,0.5) -- (4.8,1);

\draw (4.8,0.5) -- (4.8,1);
\draw (5.025,0.75) -- (4.9,0.75);
\draw[fill=white] (4.8,0.75) circle (0.1);

\node[blue] at (5.3,0) [circle,fill,inner sep=1.25]{};
\node[red] at (4.8,0.5) [circle,fill,inner sep=1.25]{};
\end{tikzpicture}
\caption{An example for \textit{Case 1} of the proof of Theorem \ref{theorem_TA_generators}. The two colored points represent $p_1$ and $p_2$.
}
\label{fig_A_diagram1}
\end{figure}
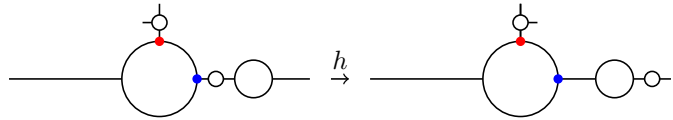

\begin{figure}[t]\centering
\vspace{10pt}
\begin{tikzpicture}[scale=1]
\draw (-2,0) -- (-0.5,0);
\draw (0,0) circle (0.5);
\draw (0.5,0) -- (0.65,0);
\draw (0.75,0) circle (0.1);
\draw (0.85,0) -- (1,0);
\draw (1.25,0) circle (0.25);
\draw (1.5,0) -- (2,0);
\node[blue] at (0.5,0) [circle,fill,inner sep=1.25]{};

\draw[-to] (2.28,0) -- node[midway,above]{$h_1$} (2.52,0);

\draw (2.8,0) -- (4.3,0);
\draw (4.8,0) circle (0.5);
\draw (5.3,0) -- (5.8,0);
\draw (6.05,0) circle (0.25);
\draw (6.3,0) -- (6.45,0);
\draw (6.55,0) circle (0.1);
\draw (6.65,0) -- (6.8,0);
\node[blue] at (5.3,0) [circle,fill,inner sep=1.25]{};
\end{tikzpicture}
\\
\vspace{16pt}
\begin{tikzpicture}[scale=1]
\draw (-2,0) -- (-0.5,0);
\draw (0,0) circle (0.5);
\draw (0.5,0) -- (2,0);

\draw (0,0.5) -- (0,1);
\draw (-0.225,0.75) -- (-0.1,0.75);
\draw[fill=white] (0,0.75) circle (0.1);
\node[red] at (0,0.5) [circle,fill,inner sep=1.25]{};

\draw[-to] (2.28,0) -- node[midway,above]{$h_2$} (2.52,0);

\draw (2.8,0) -- (4.3,0);
\draw (4.8,0) circle (0.5);
\draw (5.3,0) -- (6.8,0);
\draw (4.8,0.5) -- (4.8,1);

\draw (4.8,0.5) -- (4.8,1);
\draw (5.025,0.75) -- (4.9,0.75);
\draw[fill=white] (4.8,0.75) circle (0.1);
\node[red] at (4.8,0.5) [circle,fill,inner sep=1.25]{};
\end{tikzpicture}
\caption{The element $h$ in Figure \ref{fig_A_diagram1} can be written as
$h_1 \circ h_2$, since it has two center-adjacent components.}
\label{fig_A_diagram2}
\end{figure}
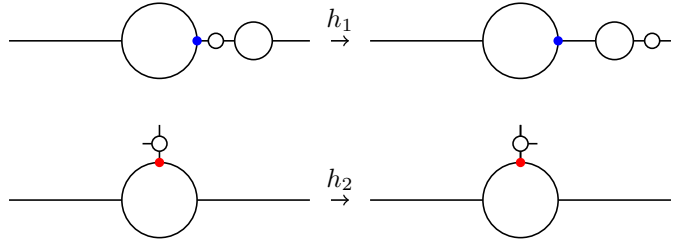

\smallskip
\textit{\textbf{Case 2}:} Suppose that all non-central components in the reduced graph pair diagram for $h$ have component paths that travel through the same $p_i$. Because of Theorem \ref{theorem_Airplane_rist_component}, there exists a $\sigma \in \langle \beta, \gamma, \delta \rangle$ such that $\sigma(p_1) = p_i$. Let $k := h^{\sigma}$, and note that we can conjugate $h$ by $\sigma$ without expanding the blue cells involved in the action of $h$, so $k$ has at most $n$ non-central components. Moreover, all of the non-central components of the reduced graph pair diagram for $k$ have component paths that travel through $p_1$. It suffices to prove that $k \in \langle \alpha, \beta, \gamma, \delta, \varepsilon \rangle$.

Consider the reduced graph pair diagram for $k$, which is sketched in Figure \ref{fig_A_diagram3}. Both the domain and the range graphs only have non-central components in the right primary blue cell. Let $C_D$ be the component on the right horizontal ray that is closest to $C_0$ in the domain graph and let $C_R$ be the component in the range graph that is the closest to $C_0$, which must be $k(C_D)$. Note that $C_D = \big( (0,\frac{1}{2^d}) \big)$ and $C_R = \big( (0,\frac{1}{2^r}) \big)$ for some $d\geq1$ and $r\geq1$. Therefore $\varepsilon^{d-1} (C_D) = \big( (0,1/2) \big)$ and $\varepsilon^{r-1} (C_R) = \big( (0,1/2) \big)$, so both the domain and the range graphs of the reduced graph pair diagram for $l := \varepsilon^{r-1} \circ k \circ \varepsilon^{-d+1}$ have $\big( (0,1/2) \big)$ as the closest component to $C_0$ on the right horizontal ray. Therefore both domain and range graphs in the reduced graph pair diagram for $l$ are such that no component lies on the inner half of $R_0$, as shown in Figure \ref{fig_A_diagram3}, and the number of non-central components in the reduced graph pair diagram for $l$ is the same as the one for $k$. It suffices to prove that $l \in \langle \alpha, \beta, \gamma, \delta, \varepsilon \rangle$.

We can now conjugate $l$ by $\alpha$ without performing any expansion of the cells involved in the action of $l$. The resulting graph pair diagram for $l^{\alpha} = \alpha^{-1} \circ l \circ \alpha$ is shown in \ref{fig_A_diagram3}: it is similar to the one of $l$, but the action is shifted left, and the central component in both the domain and range graphs for $l$ ends up in $\big( (1/2, 1/2) \big)$ in the graphs for $l^{\alpha}$, with no ray departing from it, so it can then be reduced. Then $l^{\alpha}$ has $n-1$ or less non-central components, so by induction we have that $l^{\alpha}$ belongs to $\langle \alpha, \beta, \gamma, \delta, \varepsilon \rangle$. Thus $l$ does too.

\begin{figure}\centering
\begin{subfigure}{\textwidth}\centering
\begin{tikzpicture}[scale=1]
\draw (-2,0) -- (-0.5,0);
\draw (0,0) circle (0.5);
\draw[blue] (0.5,0) -- (2,0);
\draw[red,fill=white] (0.75,0) circle (0.1); \node[red] at (0.75,0.3){$C_D$};
\draw[dashed,red] (0.9,0.05) -- (1.95,0.05);

\draw[-to] (2.28,0) -- node[midway,above]{$k$} (2.52,0);

\draw (2.8,0) -- (4.3,0);
\draw (4.8,0) circle (0.5);
\draw[blue] (5.3,0) -- (6.8,0);
\draw[red,fill=white] (5.75,0) circle (0.1); \node[red] at (5.75,0.3){$C_R$};
\draw[dashed,red] (5.9,0.05) -- (6.75,0.05);
\end{tikzpicture}
\end{subfigure}
\\
\vspace{16pt}
\begin{subfigure}{\textwidth}\centering
\begin{tikzpicture}[scale=1]
\draw (-2,0) -- (-0.5,0);
\draw (0,0) circle (0.5);
\draw (0.5,0) -- (1,0);
\draw[blue] (1.5,0) -- (2,0);
\draw[red,fill=white] (1.25,0) circle (0.25); \node[red] at (1.25,0.5){$C_D$};
\draw[dashed,red] (1.55,0.05) -- (1.95,0.05);

\draw[-to] (2.28,0) -- node[midway,above]{$l$} (2.52,0);

\draw (2.8,0) -- (4.3,0);
\draw (4.8,0) circle (0.5);
\draw (5.3,0) -- (5.8,0);
\draw[blue] (6.3,0) -- (6.8,0);
\draw[red,fill=white] (6.05,0) circle (0.25); \node[red] at (6.05,0.5){$C_R$};
\draw[dashed,red] (6.35,0.05) -- (6.75,0.05);
\end{tikzpicture}
\end{subfigure}
\\
\vspace{16pt}
\begin{subfigure}{\textwidth}\centering
\begin{tikzpicture}[scale=1]
\draw (-2,0) -- (-0.5,0);
\draw[fill=white] (-1.25,0) circle (0.25);
\draw[red,fill=white] (0,0) node{$C_D$} circle (0.5);
\draw[blue] (0.5,0) -- (2,0);
\draw[dashed,red] (0.55,0.05) -- (1.95,0.05);

\draw[-to] (2.28,0) -- node[midway,above]{$l^{\alpha}$} (2.52,0);

\draw (2.8,0) -- (4.3,0);
\draw[fill=white] (3.55,0) circle (0.25);
\draw[red,fill=white] (4.8,0) node{$C_R$} circle (0.5);;
\draw[blue] (5.3,0) -- (6.8,0);
\draw[dashed,red] (5.35,0.05) -- (6.75,0.05);

\node[white] at (1.25,0.5){$C_D$};
\end{tikzpicture}
\end{subfigure}
\caption{Sketches of $k, l$ and $l^{\alpha}$ for \textit{Case 2} of the proof of Theorem \ref{theorem_TA_generators}. Everything drawn in black is fixed pointwise, while everything colored may not be; rays may depart from red components, and components may lie on dashed red lines.}
\label{fig_A_diagram3}
\end{figure}
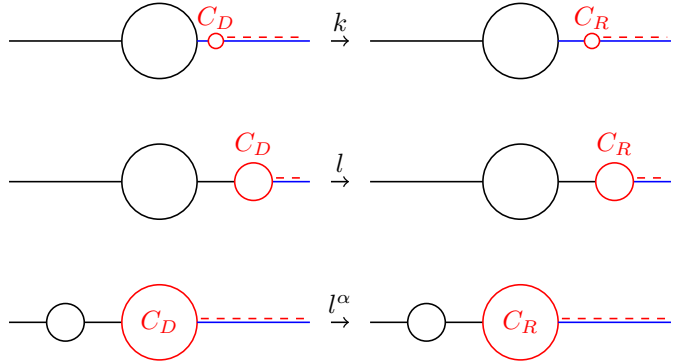
\end{proof}

\begin{remark}\label{remark_TA_generated_T_F}
Recall from Theorems \ref{theorem_Airplane_rist_component} and \ref{theorem_Airplane_rist_hor} that $rist(C_0) = \langle \beta, \gamma, \delta \rangle$ and $rist(Hor) = \langle \alpha, \varepsilon \rangle.$ Therefore, as a consequence of Theorem \ref{theorem_TA_generators}, we have that
\[ T_A = \langle rist(C_0), rist(Hor) \rangle, \]
where $rist(C_0)$ and $rist(Hor)$ are arguably the two ``most natural'' copies of Thompson's groups $T$ and $F$ in $T_A$, respectively.
\end{remark}

\begin{question}
We have just proved that $T_A$ is finitely generated. Is $T_A$ also finitely presented?
We recall that $T_B$ is not, as proved by Witzel and Zaremsky in \cite{witzar}.
\end{question}

\section{\texorpdfstring{The commutator subgroup of $T_A$}{The commutator subgroup of TA}}\label{section_commutator}

In this section we study the commutator subgroup of $T_A$. We first find a characterization of the rearrangements of $[T_A, T_A]$ in terms of their action on the extremities of the Airplane, and we find an infinite generating set along the way. Then we prove that $[T_A, T_A]$ is simple and finally we find a finite generating set for $[T_A, T_A]$.

\begin{remark}\label{remark_commutator_TA_contains}
It is easy to see that $[T_A, T_A]$ is quite large. Indeed:
\begin{enumerate}
    \item $[T_A, T_A] \geq rist(C_0)$, since $rist(C_0) \simeq T$ (Theorem \ref{theorem_Airplane_rist_component}) and $T = [T,T]$.
    \item $[T_A, T_A] \geq [rist(Hor),rist(Hor)]$, which is the group of those rearrangements of $rist(Hor)$ that act trivially at the left and right extremes of $Hor$, since $rist(Hor) \simeq F$ (Theorem \ref{theorem_Airplane_rist_hor}).
\end{enumerate}
Since $rist(C_0) = \langle \beta, \gamma, \delta \rangle$, we have that $\beta, \gamma, \delta \in [T_A, T_A]$. A direct
computation shows that $\alpha = [\varepsilon, \delta] \circ [\varepsilon^{-1}, \alpha^{-2}]$, so the commutator subgroup of $T_A$ also contains $\alpha$. Hence, we already know that $[T_A, T_A]$ contains four out of five generators of $T_A$. We will see along the way that it does not contain $\varepsilon$.
\end{remark}

\subsection{A characterization of the commutator subgroup}\label{subsection_extremal}

Let $\mathcal{E}$ be the set of all external extremes of rays in the Airplane limit space. Note that all rearrangements $f \in T_A$ act on the set $\mathcal{E}$ by permutation. We now define a concept of derivative of a rearrangement of $A$ around an extreme which gives an idea of how much the rearrangement dilates or shrinks the extremities of $A$ compared to the length of the respective ray. We then study the product of all of these derivatives, which gives a characterization for the commutator subgroup of $T_A$.

\phantomsection\label{length}
Let $e$ be a blue edge of an expansion of the Airplane replacement system. Note that $e$ lies on a unique ray $R$, and it corresponds to a portion of $R$: it can be half of it, a quarter, an eighth, etc. More precisely, it can be $1/2^k$ for any $k \in \mathbb{N}$. We call \textbf{length} of $e$ this number $1/2^k$. Figure \ref{fig_edge_length} depicts a few examples. Additionally, if $\chi(e)$ is the blue cell generated by the blue edge $e$, we define the \textbf{length} of $\chi(e)$ to be the same as the length of $e$.

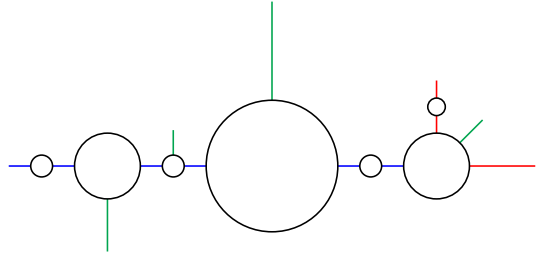
\begin{figure}[b]\centering
\begin{tikzpicture}[scale=1.8]
\draw[blue] (-2,0) -- node[midway,above]{$\frac{1}{4}$} (-1.833,0) -- (-1.667,0) -- node[midway,above]{$\frac{1}{4}$} (-1.5,0) -- (-1,0) -- node[midway,below]{$\frac{1}{4}$} (-.833,0) -- (-.667,0) -- node[midway,below]{$\frac{1}{4}$} (-.5,0) -- (.5,0) -- node[midway,below]{$\frac{1}{4}$} (.667,0) -- (.833,0) -- node[midway,below]{$\frac{1}{4}$} (1,0);
\draw[red] (1.5,0) -- node[midway,below]{$\frac{1}{2}$} (2,0);
\draw[red] (1.25,0.65) -- node[midway,left]{\footnotesize$1/2$} (1.25,.5167) -- (1.25,.3833) -- node[midway,left]{\footnotesize$1/2$} (1.25,.25);
\draw[Green] (-0.75,.083) -- node[midway,left]{\footnotesize$1$} (-0.75,0.273);
\draw[Green] (0,.5) -- node[midway,left]{\footnotesize$1$} (0,1.25);
\draw[Green] (-1.25,-.25) -- node[midway,left]{\footnotesize$1$} (-1.25,-0.65);
\draw[Green] (1.25,0) -- (1.6,.35) node[right]{\footnotesize$1$};

\draw[fill=white] (0,0) circle (0.5);
\draw[fill=white] (-1.25,0) circle (0.25);
\draw[fill=white] (1.25,0) circle (0.25);
\draw[fill=white] (0.75,0) circle (0.083);
\draw[fill=white] (-0.75,0) circle (0.083);
\draw[fill=white] (-1.75,0) circle (0.083);
\draw[fill=white] (1.25,0.45) circle (0.0666);
\end{tikzpicture}
\caption{Examples of lengths of blue edges: \textcolor{Green}{green}, \textcolor{red}{red} and \textcolor{blue}{blue} ones have lengths \textcolor{Green}{$1$}, \textcolor{red}{$\frac{1}{2}$} and \textcolor{blue}{$\frac{1}{4}$}, respectively.}
\label{fig_edge_length}
\end{figure}

Let $p \in \mathcal{E}$ and $f \in T_A$. Consider the reduced graph pair diagram for $f$. If $p$ does not appear in the domain graph, we define $D_p (f)$ as $1$. Otherwise, $p$ corresponds to the extremity of some blue edge $e_p$ in the graph. Then $f(e_p)$ is the blue edge that appears in the range graph whose extreme is $f(p)$. Let $1/2^d$ be the length of $e_p$ and $1/2^r$ the length of $f(e_p)$. We define the \textbf{extremal derivative} of $f$ in $p$ as the ratio of the length of $f(e_p)$ to the length of $e_p$, which is
\[ D_p (f) := 2^{r-d}. \]
This represents how much the action of $f$ dilates or shrinks the extremity around $p$ when compared to the ray of which $p$ and $f(p)$ are the external extremes.

Finally, we define the \textbf{global extremal derivative} of an element $f$ of $T_A$ as the product of all its extremal derivatives, which is
\[ D(f) := \prod\limits_{p \in \mathcal{E}} D_p (f). \]
Note that $D_p (f)$ equals $1$ for each extreme $p$ that does not appear in the domain graph of the reduced graph pair diagram for $f$, so this product only has a finite amount of non-trivial factors.

Now, note that $D_p (f \circ g) = D_{g(p)} (f) \cdot D_p (g)$, for all $f, g \in T_A$ and for all $p \in \mathcal{E}$. Therefore, since $g$ permutes the set $\mathcal{E}$, we have that
\[ D(f \circ g) = D(f) \cdot D(g). \]
Hence, the map $D: T_A \to \langle 2 \rangle_{ \mathbb{Q}^*}$ is group morphism, where $\langle 2 \rangle_{\mathbb{Q}^*}$ is the multiplicative group of all the integer powers of $2$, which is an infinite cyclic group.

\begin{theorem}
$[T_A,T_A] = Ker(D)$.
\end{theorem}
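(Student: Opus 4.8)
The plan is to exploit that $D$ takes values in the abelian group $\langle 2 \rangle_{\mathbb{Q}^*}$, which is infinite cyclic, so that one inclusion is immediate and the whole problem reduces to understanding the abelianization of $T_A$. First I would record the trivial inclusion: since $D$ is a homomorphism onto an abelian group, every commutator lies in its kernel, whence $[T_A, T_A] \subseteq Ker(D)$. All the content is in the reverse inclusion, which I would obtain by showing that $D$ induces an \emph{injective} map on the abelianization.

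Next I would pin down $T_A / [T_A, T_A]$. By Remark~\ref{remark_commutator_TA_contains} the four generators $\alpha, \beta, \gamma, \delta$ already lie in $[T_A, T_A]$, and by Theorem~\ref{theorem_TA_generators} these five elements generate $T_A$; hence $T_A / [T_A, T_A]$ is cyclic, generated by the image $\bar\varepsilon$ of $\varepsilon$. Because $\langle 2 \rangle_{\mathbb{Q}^*}$ is abelian, $D$ factors through the abelianization as $D = \bar D \circ \pi$, where $\pi : T_A \to T_A / [T_A, T_A]$ is the quotient map and $\bar D : T_A / [T_A, T_A] \to \langle 2 \rangle_{\mathbb{Q}^*}$. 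Since $[T_A, T_A] \subseteq Ker(D)$, we get $Ker(D) / [T_A, T_A] = Ker(\bar D)$, so it suffices to prove that $\bar D$ is injective.

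The decisive step is a direct computation showing $D(\varepsilon) \neq 1$. Here I would write down the reduced graph pair diagram of $\varepsilon$ and examine every external extreme that appears in the domain graph. Because $\varepsilon$ maps each moved component canonically (i.e.\ rigidly), the blue edge at the outer extreme of every ray departing from such a component retains its length, so all those factors $D_p(\varepsilon)$ equal $1$; the only extreme at which the blue-edge length actually changes is the outer extreme of the right horizontal ray $R_0$, where $\varepsilon$ acts exactly as $X_1$ acts near the endpoint $1 \in [0,1]$ (Theorem~\ref{theorem_Airplane_rist_hor}). There the length is scaled, so $D(\varepsilon)$ is a nontrivial power of $2$, i.e.\ a generator of $\langle 2 \rangle_{\mathbb{Q}^*}$. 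As a sanity check the same bookkeeping gives $D(\alpha) = 1$, consistent with $\alpha \in [T_A, T_A]$.

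With this in hand the conclusion is formal. Since $\bar D(\bar\varepsilon) = D(\varepsilon)$ has infinite order in $\langle 2 \rangle_{\mathbb{Q}^*}$, the cyclic group $T_A / [T_A, T_A] = \langle \bar\varepsilon \rangle$ cannot be finite (a homomorphism from a finite group into the torsion-free group $\langle 2 \rangle_{\mathbb{Q}^*}$ is trivial), so it is infinite cyclic; and a homomorphism from an infinite cyclic group to $\mathbb{Z}$ sending a generator to a nonzero element is injective. Hence $Ker(\bar D)$ is trivial, which is exactly $Ker(D) = [T_A, T_A]$. I expect the main obstacle to be the middle step, namely the careful accounting of blue-edge lengths over all extremes of the reduced diagram of $\varepsilon$ and verifying that no unexpected cancellation forces $D(\varepsilon) = 1$; everything else is bookkeeping with the factorization through the abelianization.
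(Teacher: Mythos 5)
Your proof is correct, and for the easy inclusion it coincides with the paper's (both observe that $D$ lands in an abelian group, so every commutator dies). For the reverse inclusion you take a genuinely different, more structural route. The paper argues combinatorially: given $f \in Ker(D)$, it writes $f$ as a word in the five generators, uses $D(\alpha)=D(\beta)=D(\gamma)=D(\delta)=1$ together with $D(f)=1$ to force the $\varepsilon$-exponent sum to vanish, and then rewrites the word as a product of conjugates $\alpha^{\varepsilon^j}, \beta^{\varepsilon^j}, \gamma^{\varepsilon^j}, \delta^{\varepsilon^j}$, all of which lie in $[T_A,T_A]$ by Remark~\ref{remark_commutator_TA_contains} and normality. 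You instead pass to the abelianization: since $\alpha,\beta,\gamma,\delta \in [T_A,T_A]$ and the five elements generate $T_A$ (Theorem~\ref{theorem_TA_generators}), the abelianization is cyclic on $\bar\varepsilon$, and the single computation $D(\varepsilon)\neq 1$ forces the induced map $\bar D$ to be injective. Both arguments rest on exactly the same ingredients (the generating set, the Remark, the nontriviality of $D(\varepsilon)$, and normality of the commutator subgroup), and your accounting of the extremal derivatives of $\varepsilon$ is sound: only the outer extreme of $R_0$ contributes, and the paper records the same fact as $D(\varepsilon^{-k})=2^k$. What your version buys is economy — the isomorphism $T_A^{\mathrm{ab}}\simeq\mathbb{Z}$ and the semidirect product decomposition drop out immediately rather than as separate corollaries. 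What it loses is the explicit rewriting, which the paper reuses verbatim in Corollary~\ref{corollary_commutator_TA_generators} to extract the infinite generating set $\{\beta,\gamma,\alpha^{\varepsilon^k},\delta^{\varepsilon^k}\}$ of $[T_A,T_A]$ that drives the simplicity and finite-generation arguments later; if you adopt your proof you would need to supply that rewriting separately.
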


\begin{proof}
Note that $D (\varepsilon^{-k}) = 2^k$ for each integer $k$, so $D$ is surjective. Hence, $T_A / Ker(D) \simeq \langle 2 \rangle_{\mathbb{Q}^*}$, which is abelian, and so $[T_A, T_A] \leq Ker(D)$. We only need to prove that $Ker(D) \leq [T_A, T_A]$.

Let $f \in Ker(D)$. Since $T_A = \langle \alpha, \beta, \gamma, \delta, \varepsilon \rangle$, the rearrangement $f$ can be written as $f_1 \circ f_2 \dots f_k$, where each $f_i$ is a generator or an inverse of one. Since $D(f)=1$ and $D(\alpha) = D(\beta) = D(\gamma) = D(\delta) = 1$, the number of $\varepsilon$'s among the $f_i$ must be equal to the number of $\varepsilon^{-1}$'s. Then $f$ is the product of elements chosen in the set $\{ \alpha^{\varepsilon^j}, \beta^{\varepsilon^j}, \gamma^{\varepsilon^j}, \delta^{\varepsilon^j} \: | \: j \in \mathbb{Z} \}$. Since $\alpha, \beta, \gamma$ and $\delta$ all belong to the commutator subgroup (Remark \ref{remark_commutator_TA_contains}), and $[T_A, T_A]$ is normal in $T_A$, these elements all belong to $[T_A, T_A]$. Therefore, $f \in [T_A, T_A]$, and so $Ker(D) = [T_A, T_A]$.
\end{proof}

As a consequence of this, we can immediately find an infinite generating set:

\begin{corollary}
$[T_A, T_A] = \langle \beta, \gamma, \alpha^{\varepsilon^k}, \delta^{\varepsilon^k} \: | \: k \in \mathbb{Z} \rangle$, which is the normal closure of the subgroup $H := \langle \alpha, \beta, \gamma, \delta \rangle$ in $T_A$.
\label{corollary_commutator_TA_generators}
\end{corollary}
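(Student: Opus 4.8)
The plan is to deduce the Corollary directly from the theorem just proved. Its proof already shows more than the bare equality $[T_A,T_A]=Ker(D)$: it establishes that every $f\in Ker(D)$ is a product of elements of the set $\{\alpha^{\varepsilon^j},\beta^{\varepsilon^j},\gamma^{\varepsilon^j},\delta^{\varepsilon^j}\mid j\in\mathbb{Z}\}$, so that $[T_A,T_A]=\langle\alpha^{\varepsilon^j},\beta^{\varepsilon^j},\gamma^{\varepsilon^j},\delta^{\varepsilon^j}\mid j\in\mathbb{Z}\rangle$. The task therefore splits into two parts: trimming this generating set to the one in the statement, and recognizing the resulting group as the normal closure of $H:=\langle\alpha,\beta,\gamma,\delta\rangle$.

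The first and crucial step is to prove that $\beta$ and $\gamma$ commute with $\varepsilon$, so that $\beta^{\varepsilon^k}=\beta$ and $\gamma^{\varepsilon^k}=\gamma$ for all $k\in\mathbb{Z}$; this is exactly what lets us discard every nontrivial conjugate of $\beta$ and $\gamma$. The geometric input is a support disjointness: writing $P:=\chi(e_0)$ for the right primary blue cell, where $e_0$ is the right blue edge of the base graph, one sees from Figure \ref{fig_generators} that $\varepsilon$ has support contained in $P$ (it fixes $C_0$, the left horizontal ray, and every other primary blue cell pointwise), whereas $\beta$ and $\gamma$ fix $P$ pointwise. The latter holds because $\beta$ and $\gamma$ act on $\partial C_0$ as $Y_0$ and $Y_1$ (Theorem \ref{theorem_Airplane_rist_component}), both of which fix the departure point $0\in S^1$ of $R_0$; since elements of $rist(C_0)$ extend canonically on the blue cells departing from $C_0$, each of $\beta,\gamma$ maps $P$ to itself by the canonical self-homeomorphism, which by uniqueness is the identity. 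A routine support argument (checking points inside and outside $P$ separately, using that $P$ is invariant under both maps) then yields $[\beta,\varepsilon]=[\gamma,\varepsilon]=1$. By contrast $\delta$ acts as $Y_2$, sending $R_0$ to the left horizontal ray, and $\alpha$ moves components all along $Hor$, so neither commutes with $\varepsilon$; this is precisely why the full families $\{\alpha^{\varepsilon^k}\}$ and $\{\delta^{\varepsilon^k}\}$ must be kept. Collapsing the conjugates of $\beta$ and $\gamma$ gives $[T_A,T_A]=\langle\beta,\gamma,\alpha^{\varepsilon^k},\delta^{\varepsilon^k}\mid k\in\mathbb{Z}\rangle$.

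For the identification with the normal closure $H^{T_A}$, I would argue by two inclusions. Each generator $\beta,\gamma,\alpha^{\varepsilon^k},\delta^{\varepsilon^k}$ is a conjugate in $T_A$ of an element of $H$, so $[T_A,T_A]\leq H^{T_A}$. Conversely, $\alpha,\beta,\gamma,\delta\in[T_A,T_A]$ by Remark \ref{remark_commutator_TA_contains}, hence $H\leq[T_A,T_A]$, and since $[T_A,T_A]$ is normal in $T_A$ we get $H^{T_A}\leq[T_A,T_A]$. The two subgroups therefore coincide.

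I expect the main obstacle to be the commutation claim $[\beta,\varepsilon]=[\gamma,\varepsilon]=1$; the normal-closure part is pure bookkeeping. The delicate point is justifying that $\beta$ and $\gamma$ fix the \emph{entire} cell $P$ pointwise, not merely the ray $R_0$ — this is where the rigidity of $rist(C_0)$ (an element being determined by its action on $\partial C_0$ and extending canonically on departing blue cells) together with the uniqueness of canonical homeomorphisms is indispensable. Once this support picture is set up correctly, the commutation and the remainder of the argument follow immediately.
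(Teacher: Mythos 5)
Your proposal is correct and follows essentially the same route as the paper: extract the generating set $\{\alpha^{\varepsilon^j},\beta^{\varepsilon^j},\gamma^{\varepsilon^j},\delta^{\varepsilon^j}\}$ from the proof of the preceding theorem, collapse the conjugates of $\beta$ and $\gamma$ because their supports are disjoint from that of $\varepsilon$, and identify the result with the normal closure of $H$. The only difference is that you spell out in detail the support-disjointness and the two inclusions for the normal closure, which the paper states without elaboration; your justification of these points is sound.
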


\begin{proof}
As seen in the proof of the previous theorem, $Ker(D) = \langle \alpha^{\varepsilon^j}, \beta^{\varepsilon^j}, \gamma^{\varepsilon^j}, \delta^{\varepsilon^j} \: | \: j \in \mathbb{Z} \rangle$, which is clearly the normal closure of $H$.

Since the supports of both $\beta$ and $\gamma$ have trivial intersection with the support of $\varepsilon$, we have that $\beta^{\varepsilon} = \beta$ and $\gamma^{\varepsilon} = \gamma$, so $[T_A, T_A] = Ker(D) = \langle \beta, \gamma, \alpha^{\varepsilon^k}, \delta^{\varepsilon^k} \: | \: k \in \mathbb{Z} \rangle$.
\end{proof}

Since $D: T_A \to \langle 2 \rangle_{\mathbb{Q}^*}$ is surjective and $\langle 2 \rangle_{\mathbb{Q}^*}$ is an infinite cyclic group, applying the first isomorphism theorem to $D$ immediately gives us the following result:

\begin{corollary}\label{corollary_abelianization}
The abelianization of $T_A$ is an infinite cyclic group. In particular, the index of $[T_A, T_A]$ in $T_A$ is infinite.
\end{corollary}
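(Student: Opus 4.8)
The plan is to invoke the first isomorphism theorem for the homomorphism $D$ established above, since essentially all the substantive work has already been carried out in proving that $[T_A, T_A] = Ker(D)$. By definition, the abelianization of $T_A$ is the quotient $T_A / [T_A, T_A]$. Substituting the identification $[T_A, T_A] = Ker(D)$ from the preceding theorem, this quotient becomes $T_A / Ker(D)$.

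Next I would apply the first isomorphism theorem to the morphism $D : T_A \to \langle 2 \rangle_{\mathbb{Q}^*}$, which yields $T_A / Ker(D) \simeq Im(D)$. Because $D$ was shown to be surjective (from $D(\varepsilon^{-k}) = 2^k$ for every integer $k$), its image is the full group $\langle 2 \rangle_{\mathbb{Q}^*}$ of integer powers of $2$. Hence the abelianization of $T_A$ is isomorphic to $\langle 2 \rangle_{\mathbb{Q}^*}$, which is infinite cyclic, the map $k \mapsto 2^k$ furnishing an isomorphism $\mathbb{Z} \to \langle 2 \rangle_{\mathbb{Q}^*}$.

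For the second assertion, I would note that the index $[T_A : [T_A, T_A]]$ equals the cardinality of the quotient group $T_A / [T_A, T_A]$, which we have just identified with the infinite group $\mathbb{Z}$. Therefore the index is infinite.

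There is no genuine obstacle at this stage: the entire content lies in the earlier computation that $[T_A, T_A] = Ker(D)$ together with the surjectivity of $D$, both already in hand. Once those are available, the corollary is a purely formal consequence of the first isomorphism theorem, and the only routine point to record is that $\langle 2 \rangle_{\mathbb{Q}^*}$ is indeed infinite cyclic, which is immediate.
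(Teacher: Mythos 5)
Your proposal is correct and follows exactly the paper's argument: the paper likewise obtains the corollary by applying the first isomorphism theorem to the surjective morphism $D$, using $[T_A,T_A]=Ker(D)$ and the fact that $\langle 2 \rangle_{\mathbb{Q}^*}$ is infinite cyclic. Nothing is missing.
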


Finally, note that $[T_A, T_A] \cap \langle \varepsilon \rangle = \emptyset$ because $D(\varepsilon^k) = 1 \iff k=0$ while $[T_A, T_A] = Ker(D)$, and that $[T_A, T_A] \langle \varepsilon \rangle$ contains the five generators of $T_A$. Then:

\begin{corollary}
$T_A = [T_A, T_A] \rtimes \langle \varepsilon \rangle$.
\end{corollary}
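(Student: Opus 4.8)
The plan is to recognize this decomposition as nothing more than the splitting of the short exact sequence
\[ 1 \longrightarrow [T_A, T_A] \longrightarrow T_A \xrightarrow{\ D\ } \langle 2 \rangle_{\mathbb{Q}^*} \longrightarrow 1 \]
furnished by the previous theorem, with $\varepsilon$ providing the splitting section. To exhibit $T_A$ as an internal semidirect product $N \rtimes H$ with $N = [T_A, T_A]$ and $H = \langle \varepsilon \rangle$, I would verify the three standard conditions: that $N$ is normal in $T_A$, that $N \cap H$ is trivial, and that $N H = T_A$.

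The first condition is immediate, since $N = [T_A, T_A]$ is a commutator subgroup and hence normal (equivalently, it is $Ker(D)$, the kernel of a homomorphism). For the second, I would observe that $D$ restricted to $\langle \varepsilon \rangle$ is injective: from $D(\varepsilon^{-1}) = 2$ one gets $D(\varepsilon^{k}) = 2^{-k}$, which equals $1$ exactly when $k = 0$, so any $\varepsilon^{k}$ lying in $[T_A, T_A] = Ker(D)$ must be trivial, whence $N \cap H = \{1\}$. For the third condition I would offer two routes. The quickest uses generators: by Remark~\ref{remark_commutator_TA_contains} the elements $\alpha, \beta, \gamma, \delta$ already lie in $[T_A, T_A] = N$, while $\varepsilon \in H$, so $N H$ contains all five generators of $T_A$ (Theorem~\ref{theorem_TA_generators}) and therefore equals $T_A$. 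More transparently, since $D$ is surjective and $\langle 2 \rangle_{\mathbb{Q}^*}$ is generated by $D(\varepsilon^{-1})$, any $f \in T_A$ satisfies $D(f) = 2^{-k} = D(\varepsilon^{k})$ for some $k \in \mathbb{Z}$; then $f\varepsilon^{-k} \in Ker(D) = N$, so $f = (f\varepsilon^{-k})\varepsilon^{k} \in N H$.

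I do not expect a genuine obstacle here, as all the substance was already carried by the earlier theorem identifying $[T_A, T_A]$ with $Ker(D)$ and by the remark placing $\alpha, \beta, \gamma, \delta$ inside the commutator subgroup. Conceptually, because $\langle 2 \rangle_{\mathbb{Q}^*} \simeq \mathbb{Z}$ is free, the surjection $D$ splits automatically, so an abstract semidirect decomposition is guaranteed the moment one knows $Ker(D) = [T_A, T_A]$; the only feature specific to $T_A$ is that the concrete generator $\varepsilon$ realizes the section, and the one point meriting care is simply confirming $D|_{\langle\varepsilon\rangle}$ is injective rather than merely nontrivial, which the computation $D(\varepsilon^{k}) = 2^{-k}$ settles at once.
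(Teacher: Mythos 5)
Your proposal is correct and follows essentially the same route as the paper: the paper also deduces the splitting from $[T_A,T_A]\cap\langle\varepsilon\rangle$ being trivial (via $D(\varepsilon^k)=1\iff k=0$) together with the fact that $[T_A,T_A]\,\langle\varepsilon\rangle$ contains all five generators of $T_A$. Your additional remarks (automatic normality, and the alternative surjectivity argument for $NH=T_A$) are fine but add nothing beyond what the paper's one-line justification already contains.
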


\subsection{The simplicity of the commutator subgroup}\label{subsection_commutator_TA_simple}

We start by noting certain transitivity properties of $[T_A, T_A]$ that we will use later.

\begin{lemma}\label{lemma_commutator_TA_transitive}
$[T_A,T_A]$ acts transitively on the set of components of $A$.
\end{lemma}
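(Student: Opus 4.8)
The plan is to leverage the transitivity of $T_A$ that was already established in Lemma~\ref{lemma_TA_transitive} and upgrade it to the commutator subgroup. The key observation is that $[T_A, T_A]$ already contains four of the five generators of $T_A$, namely $\alpha, \beta, \gamma, \delta$ (by Remark~\ref{remark_commutator_TA_contains}); only $\varepsilon$ lies outside. So I would first examine the proof of Lemma~\ref{lemma_TA_transitive} and check which generators were actually used to move an arbitrary component to the central one: there, the element $g$ that straightens a central ray came from $\langle \beta, \gamma, \delta \rangle$, the element that repositions a component along the right horizontal ray came from $\langle \varepsilon, \varepsilon^{\alpha^{-1}} \rangle$, and finally $\alpha^{-1}$ was applied. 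Only the middle step uses $\varepsilon$, which is not in the commutator subgroup, so the strategy is to replace that single $\varepsilon$-step with an element of $[T_A, T_A]$ that achieves the same goal.

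The cleaner route I would take is to show directly that $rist(R_0) \cap [T_A, T_A]$, or some comparable subgroup inside the commutator, already acts transitively on the components lying on the right horizontal ray $R_0$. Recall from Proposition~\ref{proposition_rist_ray} that $rist(R_0) \simeq F$ acts on $R_0$ as $F$ on $[0,1]$, and this action is transitive on the components lying on $R_0$. The point is that $F$ acting on the \emph{interior} dyadic points of $[0,1]$ is already transitive via its commutator subgroup $[F, F]$, which consists precisely of those elements that are trivial near both endpoints $0$ and $1$: since the components lying on $R_0$ correspond to interior dyadic points, and $[F, F]$ acts transitively on interior dyadic rationals of $[0,1]$, an element of $[rist(R_0), rist(R_0)] \leq [T_A, T_A]$ suffices to move any component on $R_0$ to the central component of $R_0$. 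This replaces the $\varepsilon$-dependent step by a genuine commutator.

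With that substitution in hand, I would then re-run the inductive argument from Lemma~\ref{lemma_TA_transitive} verbatim, but keeping track of the fact that every element used now lives in $[T_A, T_A]$: the ray-straightening element belongs to $\langle \beta, \gamma, \delta \rangle = rist(C_0) \leq [T_A, T_A]$ (since $T = [T, T]$, per Remark~\ref{remark_commutator_TA_contains}(1)); the repositioning element belongs to $[rist(R_0), rist(R_0)] \leq [T_A, T_A]$; and the final $\alpha^{-1}$ belongs to $[T_A, T_A]$ because $\alpha \in [T_A, T_A]$ by the explicit commutator expression in Remark~\ref{remark_commutator_TA_contains}. Since all three building blocks lie in the commutator subgroup, the composite that sends $C_1$ to the central component, and hence (by induction on depth) any component to the central component, lies in $[T_A, T_A]$.

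The main obstacle I anticipate is verifying carefully that the transitivity of $[F, F]$ on interior dyadic points translates correctly into the Airplane setting: I need to confirm that the components on $R_0$ are genuinely indexed by the interior dyadic points of $[0,1]$ under the identification of Proposition~\ref{proposition_rist_ray}, and that a standard $[F,F]$ element whose support is bounded away from the endpoints does not accidentally disturb the central component or fail to be a legitimate rearrangement near the inner extreme of $R_0$. Provided this bookkeeping checks out, the rest is a routine adaptation of the earlier transitivity proof, and transitivity of $[T_A, T_A]$ on all components follows by the same induction on depth.
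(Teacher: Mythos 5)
Your proof is correct and follows essentially the same strategy as the paper: rerun the induction of Lemma~\ref{lemma_TA_transitive}, replacing the $\varepsilon$-dependent repositioning step by an element of the commutator subgroup of a natural copy of $F$, using that $[F,F]$ acts transitively on the interior dyadic points. The only (minor) difference is that the paper uses $[rist(Hor),rist(Hor)]$ rather than $[rist(R_0),rist(R_0)]$, which sends the component lying on $R_0$ directly to $C_0$ (the point $1/2$ of $Hor$) in a single step and so avoids both the final $\alpha^{-1}$ and the appeal to $\alpha\in[T_A,T_A]$; both variants are valid, since $\alpha\in[T_A,T_A]$ is already available from Remark~\ref{remark_commutator_TA_contains}.
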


The proof of this follows the same outline of the proof of Lemma \ref{lemma_TA_transitive}. By induction on the depth $n$ of a component $C_n$ whose component path is $(C_0, C_1, \dots C_n)$, we find $f \in \langle \beta, \gamma, \delta \rangle \leq [T_A, T_A]$ such that $f(C_1)$ lies on the right horizontal ray $R_0$. Then, since $rist(Hor) \simeq F$ and $[F,F]$ is transitive on the set of dyadic points of $(0,1)$, we find $g \in [rist(Hor), rist(Hor)] \leq [T_A, T_A]$ such that $g \circ f (C_1) = C_0$, and our induction hypothesis does the rest.

As a consequence of this Lemma, we have that:

\begin{corollary}\label{corollary_commutator_TA_transitive_points}
The commutator subgroup $[T_A,T_A]$ acts transitively on the set of adjacency points between a red and a blue cell.
\end{corollary}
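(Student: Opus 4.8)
The plan is to reduce the problem to a single component, namely the central one $C_0$, and then to exploit the fact that $rist(C_0) \simeq T$ acts transitively on the dyadic points of its boundary circle. The first thing I would pin down is the nature of the objects involved: every adjacency point $p$ between a red cell and a blue cell lies on the boundary $\partial C$ of a uniquely determined component $C$ (the red cell in question), and it is the point at which a ray (the blue cell) departs from $C$. Under the identification of $\partial C$ with $S^1$ described in Subsection~\ref{subsection_component_paths}, these departure points correspond precisely to the dyadic points of $S^1$ (this is exactly the correspondence already used in the proof of Theorem~\ref{theorem_TA_generators} for the central rays leaving $C_0$). Since any rearrangement maps cells to cells preserving their colors, any $g \in [T_A, T_A]$ with $g(C) = C'$ automatically carries $p$ to an adjacency point on $\partial C'$.

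With this in hand, the argument is a two-step composition. Given two adjacency points $p_1$ and $p_2$, lying on components $C_1$ and $C_2$ respectively, I would first apply Lemma~\ref{lemma_commutator_TA_transitive} to choose $g_1, g_2 \in [T_A, T_A]$ with $g_1(C_1) = C_0$ and $g_2(C_2) = C_0$. By the remark above, $q_1 := g_1(p_1)$ and $q_2 := g_2(p_2)$ are then both adjacency points on $\partial C_0$, i.e. dyadic points of $S^1 = \partial C_0$. By Remark~\ref{remark_commutator_TA_contains} we have $rist(C_0) = \langle \beta, \gamma, \delta \rangle \simeq T \leq [T_A, T_A]$, and by Theorem~\ref{theorem_Airplane_rist_component} it acts on $\partial C_0$ exactly as $T$ acts on $S^1$. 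Since $T$ acts transitively on the dyadic points of $S^1$, there is $h \in rist(C_0) \leq [T_A, T_A]$ with $h(q_1) = q_2$. The element $g_2^{-1} \circ h \circ g_1$ then lies in $[T_A, T_A]$ and sends $p_1$ to $p_2$, which establishes transitivity.

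I do not expect a genuine obstacle here, since essentially all of the content has already been packaged into Lemma~\ref{lemma_commutator_TA_transitive} and Theorem~\ref{theorem_Airplane_rist_component}. The only point that deserves care is the identification in the first paragraph, namely that red--blue adjacency points are exactly the ray-departure points from components and hence correspond to dyadic angles; once this is granted, the statement is a direct consequence of component-transitivity together with the transitivity of Thompson's group $T$ on the dyadic points of the circle.
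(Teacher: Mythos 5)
Your proposal is correct and follows essentially the same route as the paper: reduce to $\partial C_0$ via the component-transitivity of $[T_A,T_A]$ (Lemma~\ref{lemma_commutator_TA_transitive}), then use that $rist(C_0)\simeq T$ acts transitively on the dyadic points of $\partial C_0$ (Theorem~\ref{theorem_Airplane_rist_component}). The identification of red--blue adjacency points with dyadic departure angles, which you rightly flag as the one point needing care, is exactly the observation the paper relies on as well.
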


The proof consists of showing that an adjacency point between a red and a blue cell can be mapped to such a point lying on $\partial C_0$, which is easy using the previous lemma and the transitivity of $rist(C_0) \leq [T_A, T_A]$ (Theorem \ref{theorem_Airplane_rist_component}).

We will find additional transitivity properties of $[T_A, T_A]$ in Section \ref{section_E}. Now, with the aid of this Corollary, we are ready to prove the simplicity of $[T_A,T_A]$.

\begin{theorem}\label{theorem_TA_commutator_simple}
The commutator subgroup $[T_A, T_A]$ is simple.
\end{theorem}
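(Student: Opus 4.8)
The standard strategy for proving simplicity of Thompson-like commutator subgroups is to combine a commutator-supports argument with a strong transitivity property, and this is the route I would take here. Let $N \trianglelefteq [T_A, T_A]$ be a nontrivial normal subgroup; the goal is to show $N = [T_A, T_A]$. I would fix a nontrivial element $f \in N$ and a point $x$ moved by $f$, i.e. $f(x) \neq x$. By continuity there is a small cell $\chi(e)$ containing $x$ in its interior with $f(\chi(e)) \cap \chi(e) = \emptyset$. The plan is to use this disjointness to manufacture commutators: for any $g \in [T_A, T_A]$ whose support is contained in the interior of $\chi(e)$, the conjugate $f g f^{-1}$ has support inside $f(\chi(e))$, which is disjoint from the support of $g$, so $[f, g] = f g f^{-1} g^{-1} = (f g f^{-1}) g^{-1}$ lies in $N$ and equals a specific element supported on $\chi(e) \cup f(\chi(e))$. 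The upshot of this step is that $N$ contains a nontrivial element supported on an arbitrarily small cell.

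The next step is to leverage the transitivity results already established, in particular Corollary~\ref{corollary_commutator_TA_transitive_points} (transitivity on adjacency points between red and blue cells) together with Lemma~\ref{lemma_commutator_TA_transitive} and Theorem~\ref{theorem_Airplane_rist_component}, to upgrade ``$N$ contains a nontrivial element supported on some small cell'' to ``$N$ contains a rich supply of elements supported on cells throughout $A$.'' Concretely, since $[T_A, T_A]$ acts transitively on cells of each type, and $N$ is normal, conjugating the element produced above by suitable elements of $[T_A, T_A]$ spreads nontrivial $N$-elements across all cells of the same type. The aim is to show that $N$ contains the full rigid stabilizer of some cell, or at least enough of it to generate everything. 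Because $rist(C_0) \simeq T$ is simple and contained in $[T_A, T_A]$, it suffices to show $N \cap rist(C_0) \neq \{1\}$: simplicity of $T$ then forces $rist(C_0) \leq N$, and from there one bootstraps.

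The cleanest way to finish is to show that once $rist(C_0) \leq N$, the transitivity of $[T_A, T_A]$ on components (Lemma~\ref{lemma_commutator_TA_transitive}) forces $N$ to contain $rist(C)$ for every component $C$, since $rist(C)$ is a conjugate of $rist(C_0)$ by an element of $[T_A, T_A]$ and $N$ is normal. One then argues that the rigid stabilizers of all components, together with the rigid stabilizers of rays (copies of $[F,F]$ arising from $rist(Hor)$ and its conjugates, which also lie in $[T_A,T_A]$ by Remark~\ref{remark_commutator_TA_contains}), generate all of $[T_A, T_A]$; this uses the generating description from Corollary~\ref{corollary_commutator_TA_generators} that $[T_A, T_A]$ is the normal closure of $H = \langle \alpha, \beta, \gamma, \delta \rangle$, whose pieces are supported on $C_0$ and on the central rays.

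The main obstacle I expect is the bootstrapping step: passing from ``$N$ contains one small-support element'' to ``$N \cap rist(C_0) \neq \{1\}$'' is delicate because the element of $N$ produced by the commutator trick may have support straddling two disjoint cells rather than sitting inside a single rigid stabilizer. Handling this requires care in choosing the cell $\chi(e)$ small enough, and in choosing $g$ so that the resulting commutator $[f,g]$ is genuinely nontrivial and lands in a single rigid stabilizer after a further conjugation — exploiting the $2$-transitivity of $rist(C_0)$ on central rays and the transitivity on adjacency points to realign supports. This is the heart of the Epstein–Higman–style double-commutator argument, and verifying that the relevant supports can always be made disjoint (so that $[f,g] \neq 1$) is where the geometry of the Airplane, rather than formal nonsense, does the real work.
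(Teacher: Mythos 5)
Your overall strategy (Epstein-style small-support commutators, transitivity, simplicity of $T$ applied to $rist(C_0)$, then bootstrapping to the generators of Corollary~\ref{corollary_commutator_TA_generators}) matches the paper's, but there is a genuine gap at exactly the point you flag as ``the main obstacle,'' and you do not resolve it. Your single-commutator step produces $[f,g] = (fgf^{-1})g^{-1}$, which agrees with $g^{-1}$ on $\chi(e)$ and with $fgf^{-1}$ on $f(\chi(e))$; this element genuinely straddles two far-apart cells, and no conjugation will place it inside a single rigid stabilizer such as $rist(C_0)$. The paper's fix is Epstein's \emph{double} commutator trick, which you never invoke: for $g,h \in [T_A,T_A]$ both supported in a set $I \subseteq \Delta$ with $I \cap f(I) = \emptyset$, one has $\bigl[[g,f],h\bigr] = [g,h]$, because $[g,f]$ agrees with $g$ on $I$ (so conjugation by it acts on $h$ as conjugation by $g$) while the part of $[g,f]$ supported on $f(I)$ commutes with $h$. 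The left side lies in $N$ by normality, and the right side is supported in $I$ alone. This is what lets the paper conclude that $N$ contains \emph{every} commutator $[g,h]$ of elements supported in a conjugate of $I$, and hence (choosing $I$ to be a neighborhood of a point of $\partial C$ and conjugating via Corollary~\ref{corollary_commutator_TA_transitive_points}) a nontrivial element of $rist(C_0)$, since the relevant subgroup of $rist(C_0) \simeq T$ is a nonabelian copy of $F$. Without the identity $\bigl[[g,f],h\bigr]=[g,h]$ your argument stalls precisely where you say it does.

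A secondary, smaller gap is the final generation step. You propose that the rigid stabilizers $rist(C)$ of all components together with ray stabilizers generate $[T_A,T_A]$, but $rist(Hor) \simeq F$ and $rist(R_0) \simeq F$ are \emph{not} contained in $[T_A,T_A]$ (only their commutator subgroups are, cf.\ Remark~\ref{remark_commutator_TA_contains}), so one cannot simply sweep them in. The paper instead runs the double-commutator argument a second time with $I = \partial C \cup \chi(R_1) \cup \chi(R_2)$ to get $[rist(Hor),rist(Hor)] \leq N$ (using $[F,F] = F''$ and the fact that elements of $[F,F]$ fix a common external component), and then verifies membership of each generator $\beta,\gamma,\alpha^{\varepsilon^k},\delta^{\varepsilon^k}$ in $N$ via explicit identities such as $\delta^{\varepsilon^k} = \beta \circ \beta^{\delta^{\varepsilon^k}} \circ \beta$ and $\alpha = [\varepsilon,\delta]\circ[\varepsilon^{-1},\alpha^{-2}]$. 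These concrete computations, or something equivalent, are needed to close your ``one then argues\dots generate all of $[T_A,T_A]$'' step.
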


This follows immediately from the following more general result:

\begin{proposition}\label{proposition_TA_commutator_simple}
The commutator subgroup $[T_A, T_A]$ is the only nontrivial proper subgroup of $T_A$ that is normalized by $[T_A, T_A]$.
\end{proposition}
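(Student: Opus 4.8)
The core of the proposition is the containment statement that every nontrivial subgroup $N \le T_A$ normalized by $[T_A,T_A]$ must contain $[T_A,T_A]$; applied to a nontrivial $N \le [T_A,T_A]$ that is normal in $[T_A,T_A]$ (hence normalized by it) this forces $N = [T_A,T_A]$, which is the assertion of the proposition and yields Theorem \ref{theorem_TA_commutator_simple}. So the plan is to fix a nontrivial $g \in N$ and run the classical displacement--commutator argument to show $N \supseteq [T_A,T_A]$.

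First I would produce a displaced cell. Since $g$ is a nontrivial homeomorphism and the blue cells of $\mathcal{A}$ form a neighborhood basis shrinking to points, $g$ moves some point off itself, so by continuity it moves a small enough blue cell $D$ entirely off itself; thus $D$ and $g(D)$ have disjoint interiors and $g$ extends canonically on $D$. Then for every $h \in [T_A,T_A]$ supported in the interior of $D$, the commutator $[g,h^{-1}] = (g h^{-1} g^{-1})\,h$ lies in $N$: indeed $g \in N$, and since $h \in [T_A,T_A]$ normalizes $N$ we have $h^{-1} g^{-1} h \in N$, so the product is in $N$. The factor $g h^{-1} g^{-1}$ is supported in $g(D)$ and the factor $h$ in $D$, which are interior-disjoint; hence commuting $[g,h^{-1}]$ with a second element $k \in [T_A,T_A]$ supported in $D$ collapses the $g(D)$-part and returns $[h,k]$. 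Therefore $N$ contains the commutator subgroup of the group $rist_{[T_A,T_A]}(D)$ of elements of $[T_A,T_A]$ supported in $D$.

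Next I would exploit self-similarity: a blue cell carries a faithful copy of the whole Airplane picture, so for a component $C \subseteq D$ the local stabilizer $rist(C) \simeq T$ (Theorem \ref{theorem_Airplane_rist_component}) is supported in $D$ and lies in $[T_A,T_A]$ (being a conjugate of $rist(C_0)$ and $[T_A,T_A]$ being normal). Since $T$ is simple it is perfect, so $rist(C) = [rist(C),rist(C)] \le [rist_{[T_A,T_A]}(D),rist_{[T_A,T_A]}(D)] \le N$; the same reasoning applied to rays inside $D$ places copies of the horizon commutator $[rist(Hor),rist(Hor)] \simeq [F,F]$ (Theorem \ref{theorem_Airplane_rist_hor}) inside $N$. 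I would then spread this around: $[T_A,T_A]$ is transitive on components (Lemma \ref{lemma_commutator_TA_transitive}) and on red--blue adjacency points (Corollary \ref{corollary_commutator_TA_transitive_points}), and conjugation by $[T_A,T_A]$ carries $rist(C_0)$ to $rist(C')$ for an arbitrary component $C'$. As $N$ is normalized by $[T_A,T_A]$, it follows that $N \supseteq rist(C')$ for every component $C'$, and likewise that $N$ contains the local $[F,F]$-pieces supported in arbitrary proper sub-segments of the horizon.

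It remains to assemble the generators using $[T_A,T_A] = \langle \beta,\gamma,\alpha^{\varepsilon^k},\delta^{\varepsilon^k} : k \in \mathbb{Z}\rangle$ from Corollary \ref{corollary_commutator_TA_generators}. The rotations $\beta,\gamma$ and the conjugates $\delta^{\varepsilon^k}$ live inside local copies of $rist(C') \simeq T$ and are therefore already in $N$. The delicate generators are the $\alpha^{\varepsilon^k}$: their support runs along the entire horizon and is not contained in any single proper cell, and since $\alpha$ genuinely distorts lengths near the external extremes (its extremal derivatives there are nontrivial, only cancelling globally because $\alpha \in \ker D$), it cannot be obtained from horizon-supported $[F,F]$-pieces alone. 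Recovering $\alpha^{\varepsilon^k}$ requires combining the horizon $[F,F]$-pieces with cell-supported elements on other rays whose end-distortions compensate, using the transitivity of the $F$-action on the components of a ray together with the $2$-transitivity of $rist(C_0) \simeq T$ on central rays. Verifying that these cell-supported commutator subgroups, once spread by transitivity, really do generate all of $[T_A,T_A]$ — in particular that the spread-out generators $\alpha^{\varepsilon^k}$ can be built from them — is the step I expect to be the main obstacle, and it is precisely where the self-similar structure of the Airplane and the transitivity results of this section must be combined most carefully. Once this is established, $N \supseteq [T_A,T_A]$ and the proof is complete.
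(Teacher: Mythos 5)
Your opening is sound and matches the paper's strategy: displacing a small cell $\Delta$ by the nontrivial element of $N$ and running Epstein's double commutator trick to conclude that $[h,k]\in N$ whenever $h,k\in[T_A,T_A]$ are supported in $\Delta$ is exactly the paper's property ($\star$). But the next step fails. You claim that for a component $C\subseteq D$ the rigid stabilizer $rist(C)\simeq T$ is \emph{supported in $D$}, and then use perfectness of $T$ to push it into $N$. No such subgroup exists: a blue cell $D$ meets the rest of the Airplane at the extreme point(s) of its generating edge, and any rearrangement supported in $D$ must fix those attaching points. An element supported in $D$ that stabilizes $C$ and extends canonically on the blue cells departing from $C$ must then map the departing cell containing the attaching point to itself, i.e.\ it fixes the corresponding point of $\partial C$. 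The group you actually get with support in $D$ is the stabilizer of a point in your would-be copy of $T$, hence a copy of $F$ --- which is \emph{not} perfect --- so the implication ``perfect $\Rightarrow$ contained in $N$ via double commutators'' collapses at the crucial moment. This is precisely why the paper argues differently: it uses commutators of such $F$-type elements only to produce a single nontrivial element of $N\cap rist(C_0)$, and then invokes the simplicity of $rist(C_0)\simeq T$ (Theorem \ref{theorem_Airplane_rist_component}) together with the fact that $rist(C_0)\leq[T_A,T_A]$ normalizes $N$ to conclude $rist(C_0)\leq N$. Your argument can be repaired along these lines, but as written this step is wrong, not merely incomplete.

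The second gap is in the assembly of the generators of Corollary \ref{corollary_commutator_TA_generators}, and you partly acknowledge it. The claim that the $\delta^{\varepsilon^k}$ ``live inside local copies of $rist(C')\simeq T$ and are therefore already in $N$'' is unjustified: $\delta^{\varepsilon^k}$ lies in $rist(C_0)^{\varepsilon^k}$, a conjugate of $rist(C_0)$ by $\varepsilon^k\notin[T_A,T_A]$, and since $N$ is only normalized by $[T_A,T_A]$, spreading subgroups of $N$ by conjugation can never reach these copies (indeed $\varepsilon$ does not normalize $rist(C_0)$, and no element of $\ker D$ conjugates $rist(C_0)$ onto $rist(C_0)^{\varepsilon^k}$). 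For $\alpha^{\varepsilon^k}$ you correctly diagnose the obstruction --- $\alpha$ has nontrivial extremal derivatives at the two tips of $Hor$, so it cannot be a product of elements supported away from them --- but you leave the step unresolved. The paper closes both gaps with short explicit identities after first establishing $[rist(Hor),rist(Hor)]\leq N$ (its second choice of $I$, using that $[F,F]$ is perfect and that any two of its elements fix a common ``external enough'' component of $Hor$): namely $\delta^{\varepsilon^k}=\beta\circ\beta^{\delta^{\varepsilon^k}}\circ\beta\in N$, which uses that $\delta\circ\beta$ has order three, $\delta=\delta^{-1}$, $\beta^{\varepsilon^k}=\beta$, $\beta\in N$ and $\delta^{\varepsilon^k}\in[T_A,T_A]$; and then $\alpha=[\varepsilon,\delta]\circ[\varepsilon^{-1},\alpha^{-2}]$ and $\alpha^{\varepsilon^k}=\alpha\circ[\alpha^{-1},\varepsilon^{-k}]\in\alpha\,[rist(Hor),rist(Hor)]\subseteq N$. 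Without something playing the role of these identities, your proof does not terminate, so the proposal as it stands has a genuine gap in both halves of the argument beyond the displacement trick.
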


\begin{proof}
This proof shares the overall structure with the proof of Theorem 8.4 from \cite{Belk_2015}, which itself follows the basic outline for the proof of the simplicity of $T$, which in turn is based on the work \cite{Epstein} of Epstein on the simplicity of groups of diffeomorphisms. In particular, we use Epstein’s double commutator trick, together with an argument that $T_A$ is generated by elements of small support.

Let $N$ be a nontrivial subgroup of $T_A$ that is normalized by $[T_A, T_A]$. We wish to prove that $N = [T_A, T_A]$, and we will do so by proving that $N$ contains each of the generators of the commutator subgroup exhibited in Corollary \ref{corollary_commutator_TA_generators}.

Let $f$ be a non-trivial element of $N$. There must exist a small enough cell $\Delta$ of the Airplane limit space such that $\Delta$ and $f(\Delta)$ are disjoint. Then, for all subsets $I$ of $\Delta$, we have that $I$ and $f(I)$ are disjoint. We will now prove a property that holds for every such $I$; we denote this property by ($\star$), and we will later use it twice.

Consider any two elements $g$ and $h$ of $[T_A, T_A]$ with support contained in $I$. The conjugate $f \circ g^{-1} \circ f^{-1}$ has support in $f(I)$, so $[g, f] = g \circ f \circ g^{-1} \circ f^{-1}$ has support in $I \cup f(I)$ and agrees with $g$ on $I$. Since $h$ has support in $I$, it follows that
\[ \big[[g,f],h\big] = [g,h]. \]
Since $N$ is normalized by $[T_A,T_A]$ by hypothesis, the double commutator on the left must be an element of $N$, so we have proved that 
\begin{gather}
[g, h] \in N \text{, for all } g, h \in [T_A, T_A] \text{ with support in } I. \tag{$\star$}
\end{gather}
We will use property ($\star$) for two distinct choices of the subset $I$ of $\Delta$. For each of these choices, we will conjugate ($\star$) by some properly chosen $k^{-1} \in [T_A,T_A]$; then, since $N$ is normalized by $[T_A,T_A]$, we will have that 
\[ [g, h] \in N \text{, for all } g, h \in [T_A, T_A] \text{ with support in } k(I). \]

\smallskip
\textit{First choice of $I \subseteq \Delta$.}\\
Consider a component $C$ that is contained in $\Delta$. Let $I$ be the union of the blue cell that departs from $C$ at angle $1/2$ and the two maximal red cells that make up the boundary of $C$. Call $p$ the point of $\partial C$ located at angle $0$. Figure \ref{fig_comm_A} shows an example of such $I$ and $p$.

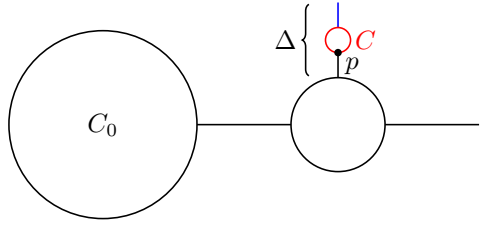
\begin{figure}\centering
\begin{tikzpicture}[scale=2.5]
\draw (0,0) node{$C_0$} circle (0.5);
\draw (0.5,0) -- (1,0);
\draw (1.5,0) -- (2,0);
\draw[blue] (1.25,0.65) -- (1.25,0.45);
\draw (1.25,0.45) -- (1.25,0.25);
\draw[red,fill=white] (1.25,0.45) node[right=0.1cm]{$C$} circle (0.0666);
\draw (1.25,0) circle (0.25);
\node at (1.25,0.3833)[circle,fill,inner sep=1]{};
\node at (1.325,0.315){$p$};
\draw[decoration={brace,mirror,raise=11pt},decorate] (1.25,0.64) -- node[left=12pt]{$\Delta$} (1.25,0.26);
\end{tikzpicture}
\caption{An example of the first choice of $I$, which is the union of the \textcolor{blue}{blue} cell and the two \textcolor{red}{red} ones. The figure is ``zoomed in'' on the right horizontal ray for better clarity.}
\label{fig_comm_A}
\end{figure}

Now, because of Corollary \ref{corollary_commutator_TA_transitive_points}, there exists an element $k \in [T_A, T_A]$ that maps $p$ to the point of adjacency between the central component and the left horizontal ray. Then $k(I)$ must be the union of the right horizontal blue cell and the two maximal red cells that make up the boundary of $C_0$. We denote this set by $K$.

Note that $I$ is included in $\Delta$, so the property ($\star$) holds. Then, conjugating ($\star$) by $k^{-1}$, we find that
\[ [g, h] \in N \text{, for all } g, h \in [T_A, T_A] \text{ with support in } K. \]

Next recall that $rist(C_0) \leq [T_A, T_A]$ (Remark \ref{remark_commutator_TA_contains}), so $[g, h] \in N$ for all $g, h \in rist(C_0)$ with support in $K$. Also recall from Theorem \ref{theorem_Airplane_rist_component} that $rist(C_0)$ is isomorphic to Thompson's group $T$; under this isomorphism, the set of those elements of $rist(C_0)$ whose support is included in $K$ corresponds to the stabilizer of $1/2$ in $T$, which is a copy of Thompson's group $F$. Since $F$ is not abelian, there exist at least two elements $g, h \in rist(C_0)$ with support in $K$ for which $[g, h]$ is nontrivial. Then $[g, h]$ belongs to both $N$ and $rist(C_0)$, so the intersection $N \cap rist(C_0)$ is a nontrivial normal subgroup of $rist(C_0)$. But $rist(C_0) \simeq T$ and $T$ is simple, so $N \cap rist(C_0) = rist(C_0)$, and therefore $rist(C_0) \leq N$.

\smallskip
\textit{Second choice of $I \subseteq \Delta$.}\\
Let $C$ be a component that is contained in $\Delta$. Let $R_1$ and $R_2$ be distinct rays that depart from $C$, and let $p_1$ be the point of adjacency between $C$ and $R_1$, and $p_2$ between $C$ and $R_2$. Call $I$ the union of $\partial C$, $\chi(R_1)$ and $\chi(R_2)$, where $\chi(R_i)$ are the blue cells associated with the rays $R_i$. Figure \ref{fig_comm_A1} shows an example of $I$, $p_1$ and $p_2$.

\begin{figure}\centering
\begin{tikzpicture}[scale=2.5]
\draw (0,0) node{$C_0$} circle (0.5);
\draw (0.5,0) -- (1,0);
\draw (1.5,0) -- (2,0);
\draw (1.25,0.65) -- (1.25,0.45);
\draw[blue] (1.39142, 0.308579) -- (1.25,0.45);
\draw[blue] (1.3184, 0.637939) -- (1.25,0.45);
\draw (1.25,0.45) -- (1.25,0.25);
\draw[red,fill=white] (1.25,0.45) node[left=0.08cm]{$C$} circle (0.0666);
\draw (1.25,0) circle (0.25);
\node at (1.2728, 0.512649)[circle,fill,inner sep=1]{}; \node at (1.4, 0.56){$p_1$};
\node at (1.29714, 0.402857)[circle,fill,inner sep=1]{}; \node at (1.435, 0.4){$p_2$};
\draw[decoration={brace,mirror,raise=11pt},decorate] (1.2,0.64) -- node[left=12pt]{$\Delta$} (1.2,0.26);
\end{tikzpicture}
\caption{An example of the second choice of $I$, which is the union of \textcolor{red}{$\partial C$} and two \textcolor{blue}{blue} cells departing from $C$. The figure is ``zoomed in'' on the right horizontal ray for better clarity.}
\label{fig_comm_A1}
\end{figure}
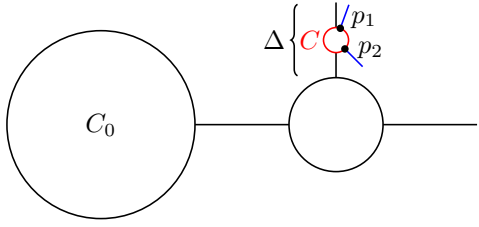

Because of Corollary \ref{corollary_commutator_TA_transitive_points}, there exists an element $k_1 \in [T_A,T_A]$ that maps $p_1$ to the point of adjacency between the central component and the right horizontal ray. Then $k_1(\partial C) = \partial C_0$, $k_1(\chi(R_1)) = \chi(R_0)$, $k_1(\chi(R_2))$ is some blue cell adjacent to $C_0$, and so $k_1(I)$ is the union of these three sets.

Now, since $T$ acts 2-transitively on the set of dyadic points of $S^1$ and because of Theorem \ref{theorem_Airplane_rist_component}, there exists an element $k_2 \in rist(C_0)$ that fixes $k_1(p_1)$ and maps $k_1(p_2)$ to the point of adjacency between the central component and the left horizontal ray. Then $k_2 \circ k_1 (I)$ is the union of $\partial C_0$ and the two horizontal central blue cells, and we denote this set by $K$. Also, since $rist(C_0) \leq [T_A,T_A]$ (Remark \ref{remark_commutator_TA_contains}), we have that $k_2 \in [T_A,T_A]$, therefore $k_2 \circ k_1 \in [T_A,T_A]$.

Note that $I$ is included in $\Delta$, so the property ($\star$) holds. Then, conjugating ($\star$) by $k_2 \circ k_1$, we find that
\[ [g, h] \in N \text{ for all } g, h \in [T_A, T_A] \text{ with support in } K. \]

Next, recall from Theorem \ref{theorem_Airplane_rist_hor} that the group $rist(Hor)$ is isomorphic to Thompson's group $F$. Under this isomorphism, the set of those elements of $rist(Hor)$ whose support is included in $K$ corresponds to the stabilizer of $1/2$. Since $1/2$ corresponds to the central component, this means exactly that
\begin{gather}
[g,h] \in N \text{ for all } g,h \in S(C_0), \tag{$\dagger$}
\end{gather}
where $S(C_0) := stab_{[rist(Hor), rist(Hor)]}(C_0)$ is the group consisting of those elements of $[rist(Hor), rist(Hor)]$ that fix the component $C_0$.

Now, since $[F,F]$ is transitive on the set of dyadic points of $(0,1)$, for each component $C'$ lying on $Hor$ there exists an element $l$ of $[rist(Hor), rist(Hor)] \leq [T_A,T_A]$ such that $l(C')=C_0$. By conjugating the group $S(C_0)$ by $l$, clearly we obtain exactly the group $S(C') := stab_{[rist(Hor), rist(Hor)]}(C')$ consisting of those elements of $[rist(Hor), rist(Hor)]$ that fix $C'$. Since we can do this for each component $C'$ lying on $Hor$, conjugating ($\dagger$) by each $l$ found this way we find that
\begin{gather*}
    [g,h] \in N \text{ for all } g,h \in S(C'), \text{ for all $C'$ lying on $Hor$}.
    \tag{$\#$}
\end{gather*}

We now prove that $[rist(Hor), rist(Hor)] \leq N$. First note that, since $rist(Hor) \simeq F$ and $[F, F] = F''$ (where $F''$ denotes the group $[[F,F],[F,F]]$), it suffices to prove that $rist(Hor)'' \leq N$. By definition, $rist(Hor)''$ is generated by the elements $[g,h]$ for $g,h \in [rist(Hor), rist(Hor)]$, so we only need to prove that these elements belong to $N$. If $g,h \in [rist(Hor), rist(Hor)]$, then they both act trivially around the extremes of $Hor$, so the intersection of their supports cannot be the entire $Hor$ and there must be some ``external enough'' component $C'$ lying on $Hor$ that is fixed by both $g$ and $h$, which means that $g,h \in S(C')$. Therefore, because of ($\#$), their commutator $[g,h]$ belongs to $N$. So $[rist(Hor), rist(Hor)] \leq N$.

\medskip
So far we have showed that:
\begin{enumerate}
    \item $rist(C_0) = \langle \beta, \gamma, \delta \rangle \leq N$;
    \item $[rist(Hor), rist(Hor)] = [\langle \alpha, \varepsilon \rangle, \langle \alpha, \varepsilon \rangle] \leq N$.
\end{enumerate}
With this in mind, we now show that $N$ contains each element of the set $\{ \beta, \gamma, \alpha^{\varepsilon^k}, \delta^{\varepsilon^k} \}$, which generates $[T_A, T_A]$ as seen in Corollary \ref{corollary_commutator_TA_generators}.

Since $rist(C_0) \leq N$, we have that $\beta, \gamma$ and $\delta$ belong to $N$. Consider $\delta^{\varepsilon^k}$, for any $k \in \mathbb{Z}$: with a direct computation, we note that $\delta \circ \beta$ has order three and $\delta = \delta^{-1}$, so $\delta = \delta^{-1} = \beta \circ \delta \circ \beta \circ \delta \circ \beta$, and then $\delta^{\varepsilon^k} = (\beta \circ \delta \circ \beta \circ \delta \circ \beta)^{\varepsilon^k}$. Since $\beta$ and $\varepsilon^k$ have disjoint supports, we have that $\beta^{\varepsilon^k} = \beta$. Therefore $\delta^{\varepsilon^k} = \beta \circ \delta^{\varepsilon^k} \circ \beta \circ \delta^{\varepsilon^k} \circ \beta = \beta \circ \beta^{\delta^{\varepsilon^k}} \circ \beta$, which belongs to $N$ because $\beta \in N$ and $\delta^{\varepsilon^k} \in [T_A, T_A] \trianglerighteq N$. Then $\delta^{\varepsilon^k} \in N$ for all $k \in \mathbb{Z}$, and we only need to prove that $\alpha^{\varepsilon^k} \in N$.

Recall that, as noted in Remark \ref{remark_commutator_TA_contains}), $\alpha = [\varepsilon, \delta] \circ [\varepsilon^{-1}, \alpha^{-2}]$, which can be seen with a direct computation. Note that $[\varepsilon, \delta] = \delta^{\varepsilon^{-1}} \circ \delta \in N$ and $[\varepsilon^{-1}, \alpha^{-2}] \in [rist(Hor), rist(Hor)] \leq N$, and so $\alpha \in N$. Finally, note that $\alpha^{\varepsilon^k} = \alpha \circ [\alpha^{-1}, \varepsilon^{-k}] \in \alpha \, [rist(Hor), rist(Hor)] \leq N$, for all $k \in \mathbb{Z}$.

Therefore $\beta, \gamma, \delta^{\varepsilon^k}$ and $\alpha^{\varepsilon^k}$ belong to $N$ for all $k \in \mathbb{Z}$, so $[T_A, T_A] \leq N$, which is what we needed to prove.
\end{proof}

\begin{question}
Having just proved that $[T_A, T_A]$ is finitely generated, it is natural to ask if it finitely presented or if it is not, as is the case for $T_B$ and $[T_B, T_B]$ (see \cite{witzar}). We have not investigated this question when writing this paper.
\end{question}

\subsection{The commutator subgroup is finitely generated}\label{subsection_commutator_TA_finitely_generated}

In this subsection we exhibit a finite set of generators for the commutator subgroup $[T_A, T_A]$.

\begin{remark}\label{remark_commutator_delta_epsilon}
It is easy to prove that $[\delta, \varepsilon]^k = [\delta, \varepsilon^k]$ using the fact that $\varepsilon^{-1}$ and $\varepsilon^{\delta}$ have disjoint supports.
\end{remark}

\begin{theorem}\label{theorem_commutator_TA_finitely_generated}
$[T_A, T_A] = \langle \alpha, \beta, \gamma, \delta, [\delta, \varepsilon], [\varepsilon^{-1}, \varepsilon^{-1} \circ \alpha] \rangle$.
\end{theorem}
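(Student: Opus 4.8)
Write $G:=\langle \alpha,\beta,\gamma,\delta,[\delta,\varepsilon],[\varepsilon^{-1},\varepsilon^{-1}\circ\alpha]\rangle$ for the right-hand side and set $c:=[\varepsilon^{-1},\varepsilon^{-1}\circ\alpha]$. The inclusion $G\le[T_A,T_A]$ is immediate: $\alpha,\beta,\gamma,\delta$ lie in $[T_A,T_A]$ by Remark~\ref{remark_commutator_TA_contains}, while the two remaining generators are commutators and hence lie in $[T_A,T_A]$ automatically. The real content is the reverse inclusion, and for this I would use Corollary~\ref{corollary_commutator_TA_generators}: it suffices to prove that $\beta,\gamma,\delta^{\varepsilon^k}$ and $\alpha^{\varepsilon^k}$ all lie in $G$ for every $k\in\mathbb{Z}$. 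Since $\beta,\gamma$ are among the generators of $G$, nothing is needed for them, and the work splits into a $\delta$-part and an $\alpha$-part.

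For the $\delta$-part I would exploit that $\delta$ is an involution (as recorded in the proof of Proposition~\ref{proposition_TA_commutator_simple}, $\delta=\delta^{-1}$) together with Remark~\ref{remark_commutator_delta_epsilon}. Expanding the commutator gives $[\delta,\varepsilon^k]=\delta\circ\varepsilon^k\circ\delta^{-1}\circ\varepsilon^{-k}=\delta\circ\delta^{\varepsilon^{-k}}$, so that $\delta^{\varepsilon^{-k}}=\delta\circ[\delta,\varepsilon^k]=\delta\circ[\delta,\varepsilon]^{k}$, using $[\delta,\varepsilon]^k=[\delta,\varepsilon^k]$; replacing $-k$ by $j$ yields $\delta^{\varepsilon^{j}}=\delta\circ[\delta,\varepsilon]^{-j}$. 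As $\delta$ and $[\delta,\varepsilon]$ are generators of $G$, this places the entire family $\{\delta^{\varepsilon^{j}}\}_{j\in\mathbb{Z}}$ in $G$, so the single generator $[\delta,\varepsilon]$ already recovers all $\delta$-conjugates.

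For the $\alpha$-part I would first reduce everything to one containment. Using $\alpha^{\varepsilon^k}=[\varepsilon^{-k},\alpha]\circ\alpha$, where $[\varepsilon^{-k},\alpha]$ is a commutator of elements of $rist(Hor)=\langle\alpha,\varepsilon\rangle$ (Theorem~\ref{theorem_Airplane_rist_hor}) and hence lies in $[rist(Hor),rist(Hor)]$, and recalling that $\alpha\in G$, we see that every $\alpha^{\varepsilon^k}$ lands in $G$ as soon as one proves
\[ [rist(Hor),rist(Hor)]\le G. \]
This is the heart of the theorem, and the role of the last generator $c$ becomes visible here: a direct computation rewrites it as $c=\alpha^{\varepsilon^2}\circ(\alpha^{\varepsilon})^{-1}$, exhibiting $c$ as a nontrivial element of $[rist(Hor),rist(Hor)]\simeq[F,F]$. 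The plan is to recognize $\{c^{\alpha^{n}}:n\in\mathbb{Z}\}$ as (a conjugate of) a standard generating set of the commutator subgroup of Thompson's group $F$, built from a single "bump" and the shift provided by the first generator $x_0$, whose role is played by $\alpha$; since $\alpha,c\in G$, all conjugates $c^{\alpha^{n}}$ and the subgroup they generate lie in $G$.

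I expect the genuine obstacle to be precisely the claim that the $\alpha$-conjugates of $c$ generate \emph{all} of $[rist(Hor),rist(Hor)]$ and not some proper subgroup. The subtlety is that $[F,F]$ is the normal closure in $F$ of a single commutator, but $G$ only affords conjugation by $\alpha$ — the generator $\varepsilon$ is \emph{not} available, since $\varepsilon\notin[T_A,T_A]$. Concretely, inside the copy $rist(Hor)\simeq F$ one must check that the subgroup $M:=\langle c^{\alpha^{n}}:n\in\mathbb{Z}\rangle$ is invariant under conjugation by $\varepsilon$ and contains $[\alpha,\varepsilon]$, so that $F/M$ is abelian and therefore $M=[F,F]$. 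This is a purely computational assertion about relations in $F$ (equivalently about explicit graph pair diagrams along $Hor$), and tracking how $\varepsilon$-conjugation of $c$ re-expresses in terms of its $\alpha$-conjugates is the step I would budget the most effort for. Once $[rist(Hor),rist(Hor)]\le G$ is established, the first three paragraphs assemble to give $[T_A,T_A]\le G$; one could alternatively argue that $G$ is normalized by $\varepsilon$ (hence by all of $T_A$) and then invoke the simplicity of $[T_A,T_A]$ through Proposition~\ref{proposition_TA_commutator_simple} and Theorem~\ref{theorem_TA_commutator_simple}, but this route still hinges on controlling the $\varepsilon$-conjugates of $c$ and so does not sidestep the main obstacle.
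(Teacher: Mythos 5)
Your reduction to the infinite generating set of Corollary~\ref{corollary_commutator_TA_generators}, the easy inclusion $G\le[T_A,T_A]$, and the $\delta$-part via Remark~\ref{remark_commutator_delta_epsilon} all match the paper and are correct. The $\alpha$-part, however, is where the actual content of the theorem lives, and there you have an acknowledged gap: you reduce to showing $[rist(Hor),rist(Hor)]\le G$ and propose to obtain this from the subgroup $M=\langle c^{\alpha^n}:n\in\mathbb{Z}\rangle$, but you never establish that $M$ exhausts $[rist(Hor),rist(Hor)]$, and this is not a routine verification. Even granting that the supports of the $c^{\alpha^n}$ sweep all of $Hor$, the subgroup generated by the $\alpha$-conjugates of a single element of $[F,F]$ is a priori only a subgroup whose normal closure (in $\langle\alpha,c\rangle$, not in $F$) is the relevant normal closure; without $\varepsilon$ available in $G$ you cannot invoke the standard fact that the normal closure in $F$ of a nontrivial element of $[F,F]$ is $[F,F]$, and the invariance of $M$ under $\varepsilon$-conjugation that you flag as "purely computational" is precisely the unproved heart of the matter. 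Note also that your plan makes no use of the generator $[\delta,\varepsilon]$ in this part, which is a sign that something is off balance.

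The paper's route is different and avoids this. It forms $F_0:=\langle[\delta,\varepsilon],\,[\varepsilon^{-1},\varepsilon^{-1}\circ\alpha]\rangle\le G$ and observes that these two elements act on the right horizontal ray $R_0$ exactly as the generators of $F$ act on $[0,1]$ (the same action as $rist(R_0)$), so the pairing of $[\delta,\varepsilon]$ with $c$ is essential. It then uses the extremal derivative at the outer endpoint of $R_0$: an element $f\in F_0$ with $D_r(f)=1$ is shown to act trivially on the left horizontal ray (because the left-ray action of $f$ depends only on the total exponent of $[\delta,\varepsilon]$, which $D_l$ forces to be zero), hence lies in $rist(R_0)$; conversely any $g\in rist(R_0)\cap[T_A,T_A]$ is matched by such an $f$ and equals it, since both are determined by their action on $Hor$. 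This gives $rist(R_0)\cap[T_A,T_A]\le F_0\le G$, which is all that is needed: $[\varepsilon^{-k},\alpha]$ is trivial on the left ray, so it lies in $rist(R_0)\cap[T_A,T_A]$, and $\alpha^{\varepsilon^k}=[\varepsilon^{-k},\alpha]\circ\alpha\in G$. In particular the target you set yourself ($[rist(Hor),rist(Hor)]\le G$) is stronger than what the proof requires. To repair your argument you would either need to carry out the $\varepsilon$-invariance computation for $M$ in full, or switch to the derivative argument above.
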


\begin{proof}
Consider $G := \langle \alpha, \beta, \gamma, \delta, [\delta, \varepsilon], [\varepsilon^{-1}, \varepsilon^{-1} \circ \alpha] \rangle$, which is clearly a subgroup of $[T_A, T_A]$. We will show that $G$ is the entire commutator subgroup of $T_A$ by proving that it contains the infinite generating set $\{ \beta, \gamma, \alpha^{\varepsilon^k}, \delta^{\varepsilon^k} \}$ of Corollary \ref{corollary_commutator_TA_generators}.

Clearly both $\beta$ and $\gamma$ belong to $G$. Also note that, because of Remark \ref{remark_commutator_delta_epsilon}, we have that $\delta^{\varepsilon^k} = \delta \circ [\delta, \varepsilon^{-k}] = \delta \circ [\delta, \varepsilon]^{-k}$, which belongs to $G$. Hence, we only need to prove that $\alpha^{\varepsilon^k} \in G$ for all $k \in \mathbb{Z}$.

Let $F_0 := \langle [\delta, \varepsilon], [\varepsilon^{-1}, \varepsilon^{-1} \circ \alpha] \rangle \leq G$. The generators are depicted in Figure \ref{fig_gen_F0}. It is not hard to see that $F_0$ acts on $R_0$ as Thompson's group $F$ does on $[0,1]$, which is also the same way $rist(R_0)$ acts on $R_0$ (Proposition \ref{proposition_rist_ray}). We remark that the overall action of $F_0$ on the Airplane limit space is not trivial outside of $R_0$.

\begin{figure}\centering
\begin{subfigure}{\textwidth}\centering
\begin{tikzpicture}[scale=1.6]
    \draw (0,0) circle (0.5);
    \node at (0,0) {$C_0$};
    \draw (0.5,0) -- (0.65,0);
    \draw[blue] (0.75,0) circle (0.1);
    \draw[blue] (0.75,0.1) node[above]{\footnotesize A};
    \draw (0.85,0) -- (0.9,0);
    \draw[red] (0.925,0) circle (0.025);
    \draw (0.95,0) -- (1,0);
    \draw (1.25,0) circle (0.25);
    \draw (1.5,0) -- (2,0);
    
    \draw[-to] (2.175,0) -- (2.325,0);
    
    \draw (3,0) circle (0.5);
    \node at (3,0) {$C_0$};
    \draw (3.5,0) -- (3.55,0);
    \draw[blue] (3.575,0) circle (0.025);
    \draw[blue] (3.575,0.025) node[above]{\footnotesize A};
    \draw (3.6,0) -- (3.65,0);
    \draw[red] (3.75,0) circle (0.1);
    \draw (3.85,0) -- (4,0);
    \draw (4.25,0) circle (0.25);
    \draw (4.5,0) -- (5,0);
\end{tikzpicture}
\caption{The element $[\varepsilon^{-1}, \varepsilon^{-1} \circ \alpha]$. The left horizontal ray is fixed, hence it is omitted.}
\end{subfigure}\\
\vspace*{5pt}
\begin{subfigure}{\textwidth}\centering
\begin{tikzpicture}[scale=1]
    \draw (-2,0) -- (2,0);
    \draw[fill=white] (0,0) circle (0.5);
    \draw[Green,fill=white] (-1.25,0) circle (0.25);
    \draw[Plum,fill=white] (-0.75,0) circle (0.1);
    \draw[blue,fill=white] (1.25,0) circle (0.25);
    \draw[red,fill=white] (1.75,0) circle (0.1);
    \draw[blue] (1.25,0.25) node[above]{\footnotesize A};

    \draw[-to] (2.28,0) -- (2.52,0);
    
    \begin{scope}[xshift=4.8cm]
    \draw (-2,0) -- (2,0);
    \draw[fill=white] (0,0) circle (0.5);
    \draw[Green,fill=white] (-1.75,0) circle (0.1);
    \draw[Plum,fill=white] (-1.25,0) circle (0.25);
    \draw[blue,fill=white] (0.75,0) circle (0.1);
    \draw[red,fill=white] (1.25,0) circle (0.25);
    \draw[blue] (0.75,0.1) node[above]{\footnotesize A};
    \end{scope}
\end{tikzpicture}
\caption{The element $[\delta, \varepsilon]$.}
\end{subfigure}
\caption{The two generators of $F_0$}
\label{fig_gen_F0}
\end{figure}
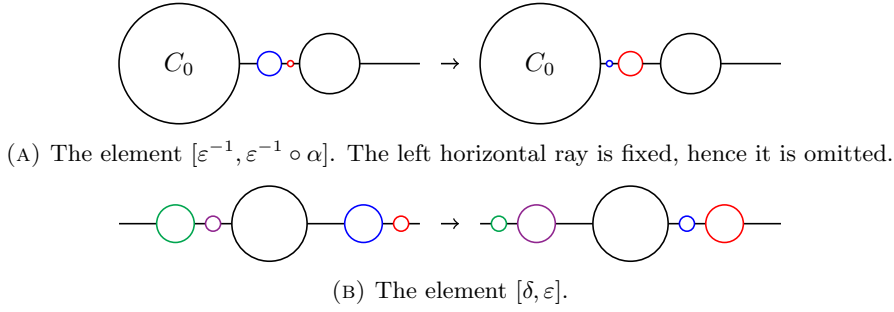

Let $r, l \in \mathcal{E}$ be the external extremes of the right and the left horizontal rays, respectively, and let $f$ be an element of $F_0$ such that $D_r(f) = 1$. Then $D_l (f) =1$ as well, because $f \in [T_A, T_A]$ and every other extremal derivative is clearly trivial. Now, since $f \in F_0$, there exists a finite product of elements chosen among $\{ [\delta, \varepsilon], [\varepsilon^{-1}, \varepsilon^{-1} \circ \alpha] \}$ that equals $f$. Since both these elements fix $r$ and $l$, and since all extremal derivatives of $[\varepsilon^{-1}, \varepsilon^{-1} \circ \alpha]$ are trivial, the value of $D_l (f)$ only depends on the total sum of the exponents of $[\delta, \varepsilon]$ in that product, so that total sum must be zero. Now, since $[\varepsilon^{-1}, \varepsilon^{-1} \circ \alpha]$ acts trivially on the left horizontal ray, the action of $f$ on that ray only depends on the total sum of exponents of $[\delta, \varepsilon]$. Then $f$ must act trivially on the left horizontal ray, which means that $f$ only acts on $R_0$. This proves that all elements $f \in F_0$ such that $D_r (f) = 1$ belong to $rist(R_0)$.

Now consider an element $g$ of $rist(R_0) \cap [T_A, T_A]$: since $F_0$ acts on $R_0$ as $rist(R_0)$ does, there exists an $f \in F_0$ such that $f$ acts on $R_0$ as $g$ does. Then note that $D_r (f) = D_r (g)$ (because $r$ is the external extreme of $R_0$), and $D_r (g) = 1$ because $g \in [T_A, T_A]$, so $D_r (f) = 1$. Then, as noted right above, $f$ belongs to $rist(R_0)$, and therefore it acts exactly as $g$ does on the entire $Hor$. Since both $f$ and $g$ are elements of $rist(Hor)$, they are entirely determined by their action on $Hor$, so $f = g$. Therefore we have that $rist(R_0) \cap [T_A, T_A] \leq F_0$.

Finally, let us prove that $\alpha^{\varepsilon^k} \in G$. Since $\alpha \in G$ and $\alpha^{\varepsilon^k} = [\varepsilon^{-k}, \alpha] \circ \alpha$, it suffices to prove that $[\varepsilon^{-k}, \alpha] \in G$. Note that $[\varepsilon^{-k}, \alpha]$ acts trivially on the left horizontal ray, and so it belongs to $rist(R_0)$. Then $[\varepsilon^{-k}, \alpha] \in [T_A, T_A] \cap rist(R_0) \leq F_0 \leq G$, and we are done.
\end{proof}

\section{\texorpdfstring{Thompson's group $T$ contains a copy of $T_A$}{Thompson's group T contains a copy of TA}}\label{section_circular}

In this section we show that $T$ contains an isomorphic copy of $T_A$.

Let $\mathcal{C(A)}$ be the replacement system whose set of colors is $\{blue, red \}$ and having base graph and replacement graphs as depicted in Figure \ref{fig_replacement_C(A)}. It is easy to see that this replacement system is such that its limit space exists (see Proposition 1.22 of \cite{belk2016rearrangement}), hence its rearrangement group exists, and we denote it by $C_A$. We will now prove that $T$ contains a copy of $C_A$ and then that $C_A$ contains a copy of $T_A$.

A replacement system is said to be \textbf{circular} if its base graph is a closed path and each of its replacement graphs consists solely of a path. Note that each expansion of a circular replacement system is always a closed path. Then it is not hard to prove that Thompson's group $T$ contains an isomorphic copy of any rearrangement group of a circular replacement system. It is clear that $\mathcal{C(A)}$ is a circular replacement system, so $T$ contains an isomorphic copy of $C_A$.

\begin{figure}\centering
\begin{subfigure}[b]{.3\textwidth}
\centering
\begin{tikzpicture}[scale=1.25]
    \draw[->-=.5, blue] (0:1) arc [radius=1, start angle=0, end angle=60];
    \draw[->-=.5, red] (60:1) arc [radius=1, start angle=60, end angle=120];
    \draw[->-=.5, blue] (120:1) arc [radius=1, start angle=120, end angle=180];
    \draw[->-=.5, blue] (180:1) arc [radius=1, start angle=180, end angle=240];
    \draw[->-=.5, red] (240:1) arc [radius=1, start angle=240, end angle=300];
    \draw[->-=.5, blue] (300:1) arc [radius=1, start angle=300, end angle=360];
    \node at (0:1) [circle, fill, inner sep = 1.25]{};
    \node at (60:1) [circle, fill, inner sep = 1.25]{};
    \node at (120:1) [circle, fill, inner sep = 1.25]{};
    \node at (180:1) [circle, fill, inner sep = 1.25]{};
    \node at (240:1) [circle, fill, inner sep = 1.25]{};
    \node at (300:1) [circle, fill, inner sep = 1.25]{};
\end{tikzpicture}
\caption{The base graph $\Gamma_C$.}
\end{subfigure}
\begin{subfigure}[b]{.6\textwidth}\centering
\begin{subfigure}{\textwidth}\centering
\begin{tikzpicture}[scale=.85]
    \draw[->-=.55, red] (-1.6,0) node[black, circle, fill, inner sep = 1.25]{} node[black, above]{$v_i$} -- (0,0) node[black, circle, fill, inner sep = 1.25]{} node[black, above]{$v_t$};
    
    \draw[-stealth] (0.5,0) -- (0.9,0);
    
    \draw[->-=.55,red] (1.4,0) node[black, circle, fill, inner sep = 1.25]{} node[black, above]{$v_i$} -- (2.4,0) node[black, circle, fill, inner sep = 1.25]{};
    \draw[->-=.55,blue] (2.4,0) node[black, circle, fill, inner sep = 1.25]{} -- (3.4,0) node[black, circle, fill, inner sep = 1.25]{};
    \draw[->-=.55,blue] (3.4,0) node[black, circle, fill, inner sep = 1.25]{} -- (4.4,0) node[black, circle, fill, inner sep = 1.25]{};
    \draw[->-=.55,red] (4.4,0) node[black, circle, fill, inner sep = 1.25]{} -- (5.4,0) node[black, circle, fill, inner sep = 1.25]{} node[black, above]{$v_t$};
\end{tikzpicture}
\end{subfigure}\\
\vspace{25pt}
\begin{subfigure}{\textwidth}\centering
\begin{tikzpicture}[scale=.85]
    \draw[->-=.55, blue] (-1.6,0) node[black, circle, fill, inner sep = 1.25]{} node[black, above]{$v_i$} -- (0,0) node[black, circle, fill, inner sep = 1.25]{} node[black, above]{$v_t$};
    
    \draw[-stealth] (0.5,0) -- (0.9,0);
    
    \draw[->-=.55,blue] (1.4,0) node[black, circle, fill, inner sep = 1.25]{} node[black, above]{$v_i$} -- (2.73333,0) node[black, circle, fill, inner sep = 1.25]{};
    \draw[->-=.55,red] (2.73333,0) node[black, circle, fill, inner sep = 1.25]{} -- (4.06667,0) node[black, circle, fill, inner sep = 1.25]{};
    \draw[->-=.55,blue] (4.06667,0) node[black, circle, fill, inner sep = 1.25]{} -- (5.4,0) node[black, circle, fill, inner sep = 1.25]{} node[black, above]{$v_t$};
\end{tikzpicture}
\end{subfigure}\\
\vspace{15pt}
\caption{The replacement rules.}
\end{subfigure}
\caption{The replacement system $\mathcal{C(A)}$.}
\label{fig_replacement_C(A)}
\end{figure}

Then we only need to prove that $C_A$ contains an isomorphic copy of $T_A$. We will first define an injective map $\Phi: \mathbb{E}_{\mathcal{A}} \to \mathbb{E}_{\mathcal{C(A)}}$, where $\mathbb{E}_{\mathcal{A}}$ is the set of all expansions of the Airplane replacement system $\mathcal{A}$ and $\mathbb{E}_{\mathcal{C(A)}}$ is the set of all expansions of $\mathcal{C(A)}$. Then we will use $\Phi$ to build an injective group morphism $\phi: T_A \to C_A$.

Note that the base graph $\Gamma_C$ of $\mathcal{C(A)}$ can be obtained from the base graph of $\mathcal{A}$ by splitting each vertex into two except for the two extremes, as shown in Figure \ref{fig_circularization}. We define $\Phi(\Gamma) := \Gamma_C$. Each red edge of $\Gamma$ is mapped to a red edge of $\Gamma_C$, while each blue edge is split into a pair of blue edges. Conversely, each red edge of $\Gamma_C$ descends from a unique red edge of $\Gamma$, while certain pairs of blue edges of $\Gamma_C$ descend from a unique blue edge of $\Gamma$. Informally, this gives a correspondence between the edges of $\Gamma$ and those of $\Gamma_C$ that is one-to-one between red edges and one-to-two between blue edges.

\begin{figure}\centering
\begin{tikzpicture}[scale=.7]
    \draw[->-=.34,->-=.68,gray!60] (0.5,0) to[out=90,in=240] (60:2);
    \draw[->-=.34,->-=.68,gray!60] (0.5,0) to[out=270,in=120] (300:2);
    \draw[->-=.34,->-=.68,gray!60] (-0.5,0) to[out=90,in=300] (120:2);
    \draw[->-=.34,->-=.68,gray!60] (-0.5,0) to[out=270,in=60] (240:2);
    
    \draw[->-=.5,blue,dashed] (-0.5,0) -- (-2,0);
    \draw[->-=.5,blue,dashed] (0.5,0) -- (2,0);
    \draw[->-=.5,red,dashed] (0.5,0) node[circle,fill,inner sep=1.25]{} to[out=90,in=90,looseness=1.7] (-0.5,0);
    \draw[->-=.5,red,dashed] (-0.5,0) node[black!65,circle,fill,inner sep=1.5]{} to[out=270,in=270,looseness=1.7] (0.5,0) node[black!65,circle,fill,inner sep=1.5]{};
    
    \draw[->-=.5, blue] (0:2) arc [radius=2, start angle=0, end angle=60];
    \draw[->-=.5, red] (60:2) arc [radius=2, start angle=60, end angle=120];
    \draw[->-=.5, blue] (120:2) arc [radius=2, start angle=120, end angle=180];
    \draw[->-=.5, blue] (180:2) arc [radius=2, start angle=180, end angle=240];
    \draw[->-=.5, red] (240:2) arc [radius=2, start angle=240, end angle=300];
    \draw[->-=.5, blue] (300:2) arc [radius=2, start angle=300, end angle=360];
    \node at (0:2) [circle, fill, inner sep = 1.5]{}; \node at (0:2) [right=.2cm]{$r$};
    \node at (60:2) [circle, fill, inner sep = 1.5]{};
    \node at (120:2) [circle, fill, inner sep = 1.5]{};
    \node at (180:2) [circle, fill, inner sep = 1.5]{}; \node at (180:2) [left=.2cm]{$l$};
    \node at (240:2) [circle, fill, inner sep = 1.5]{};
    \node at (300:2) [circle, fill, inner sep = 1.5]{};
\end{tikzpicture}
\caption{The base graph $\Gamma_C$ of $\mathcal{C(A)}$ obtained from the base graph $\Gamma$ of $\mathcal{A}$ (drawn with dashed lines).}
\label{fig_circularization}
\end{figure}
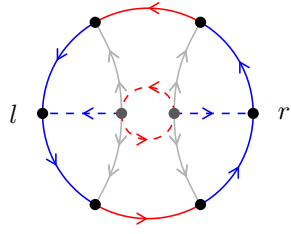

We now extend this correspondence to any expansion of the Airplane replacement system $\mathcal{A}$.
If $E$ is an expansion of $\mathcal{A}$ and $e$ is one of its edges, we denote by $E \lhd e$ the simple expansion obtained by replacing $e$ in $E$. Since each expansion $E$ of $\mathcal{A}$ is a finite sequence of simple expansions, it suffices to define $\Phi(E \lhd e)$ starting from $\Phi(E)$. We do this in the following way, distinguishing by the color of the edge $e$:
\begin{itemize}
    \item if $e$ is red, then $\Phi(E \lhd e)$ comes from $\Phi(E)$ by replacing its red edge corresponding to $e$ with the red replacement rule of $\mathcal{C(A)}$ (Figure \ref{fig_red_C(A)});
    \item if $e$ is blue, then $\Phi(E \lhd e)$ comes from $\Phi(E)$ by replacing the blue edges corresponding to $e$ with the blue replacement rule of $\mathcal{C(A)}$ (Figure \ref{fig_blue_C(A)}).
\end{itemize}
Note that we still have a correspondence between edges of $E$ and edges of $\Phi(E)$ that is one-to-one between red edges and ``one-to-two'' between blue edges. It is not hard to see that the order of simple expansions does not matter (which is, if $e_1, e_2 \in E^1$, then $\Phi(E \lhd e_1 \lhd e_2) = \Phi(E \lhd e_2 \lhd e_1)$), and that $\Phi$ is injective.

\begin{figure}\centering
\begin{tikzpicture}[scale=.7]
    \draw[->-=.34,->-=.68,gray!60] (0.5,0) to[out=90,in=240] (60:2);
    \draw[->-=.34,->-=.68,gray!60] (-0.5,0) to[out=90,in=300] (120:2);
    
    \draw[->-=.5,blue,dashed] (-0.5,0) -- (-2,0);
    \draw[->-=.5,blue,dashed] (0.5,0) -- (2,0);
    \draw[->-=.5,red,dashed] (0.5,0) node[circle,fill,inner sep=1.25]{} to[out=90,in=90,looseness=1.7] node[midway,above]{$e$} (-0.5,0);
    \draw[->-=.5,red,dashed] (-0.5,0) node[black!65,circle,fill,inner sep=1.5]{} to[out=270,in=270,looseness=1.7] (0.5,0) node[black!65,circle,fill,inner sep=1.5]{};
    
    \draw[->-=.5, blue] (0:2) arc [radius=2, start angle=0, end angle=60];
    \draw[->-=.5, red] (60:2) arc [radius=2, start angle=60, end angle=120];
    \draw[->-=.5, blue] (120:2) arc [radius=2, start angle=120, end angle=180];
    \draw[->-=.5, blue] (180:2) arc [radius=2, start angle=180, end angle=240];
    \draw[->-=.5, red] (240:2) arc [radius=2, start angle=240, end angle=300];
    \draw[->-=.5, blue] (300:2) arc [radius=2, start angle=300, end angle=360];
    \node at (0:2) [circle, fill, inner sep = 1.5]{};
    \node at (60:2) [circle, fill, inner sep = 1.5]{};
    \node at (120:2) [circle, fill, inner sep = 1.5]{};
    \node at (180:2) [circle, fill, inner sep = 1.5]{};
    \node at (240:2) [circle, fill, inner sep = 1.5]{};
    \node at (300:2) [circle, fill, inner sep = 1.5]{};
    
    \draw[-to,thick] (2.8,0) -- node[midway, above]{expanding \textcolor{red}{$e$}} (5.2,0);
    
    \begin{scope}[xshift=8cm]
    \draw[->-=.34,->-=.68,gray!60] (0.5,0) to[out=90,in=240] (60:2);
    \draw[->-=.34,->-=.68,gray!60] (-0.5,0) to[out=90,in=300] (120:2);
    \draw[->-=.5,gray!60] (0,1.2) to[out=90,in=270] (90:2);
    
    \draw[->-=.5,blue,dashed] (-0.5,0) -- (-2,0);
    \draw[->-=.5,blue,dashed] (0.5,0) -- (2,0);
    \draw[->-=.25,->-=.75,red,dashed] (0.5,0) node[circle,fill,inner sep=1.25]{} to[out=90,in=90,looseness=1.7] (-0.5,0);
    \draw[->-=.5,red,dashed] (-0.5,0) node[black!65,circle,fill,inner sep=1.5]{} to[out=270,in=270,looseness=1.7] (0.5,0) node[black!65,circle,fill,inner sep=1.5]{};
    \draw[->-=.5,blue,dashed] (0,0.5) node[black!65,circle,fill,inner sep=1.5]{} -- (0,1.2) node[black!65,circle,fill,inner sep=1.5]{};
    
    \draw[->-=.5, blue] (0:2) arc [radius=2, start angle=0, end angle=60];
    \draw[red] (60:2) arc [radius=2, start angle=60, end angle=75];
    \draw[blue] (75:2) arc [radius=2, start angle=75, end angle=90];
    \draw[blue] (90:2) arc [radius=2, start angle=90, end angle=105];
    \draw[red] (105:2) arc [radius=2, start angle=105, end angle=120];
    \draw[->-=.5, blue] (120:2) arc [radius=2, start angle=120, end angle=180];
    \draw[->-=.5, blue] (180:2) arc [radius=2, start angle=180, end angle=240];
    \draw[->-=.5, red] (240:2) arc [radius=2, start angle=240, end angle=300];
    \draw[->-=.5, blue] (300:2) arc [radius=2, start angle=300, end angle=360];
    \node at (0:2) [circle, fill, inner sep = 1.5]{};
    \node at (60:2) [circle, fill, inner sep = 1.5]{};
    \node at (75:2) [circle, fill, inner sep = 1.5]{};
    \node at (90:2) [circle, fill, inner sep = 1.5]{};
    \node at (105:2) [circle, fill, inner sep = 1.5]{};
    \node at (120:2) [circle, fill, inner sep = 1.5]{};
    \node at (180:2) [circle, fill, inner sep = 1.5]{};
    \node at (240:2) [circle, fill, inner sep = 1.5]{};
    \node at (300:2) [circle, fill, inner sep = 1.5]{};
    \end{scope}
\end{tikzpicture}
\caption{An example of $\Phi(E \lhd e)$ from $\Phi(E)$, where $e$ is a red edge.}
\label{fig_red_C(A)}
\end{figure}
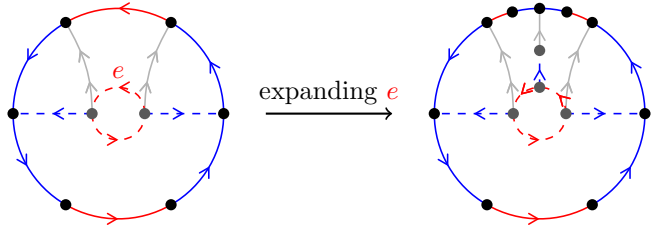

\begin{figure}\centering
\begin{tikzpicture}[scale=.7]
    \draw[->-=.34,->-=.68,gray!60] (0.5,0) to[out=90,in=240] (60:2);
    \draw[->-=.34,->-=.68,gray!60] (0.5,0) to[out=270,in=120] (300:2);
    
    \draw[->-=.5,blue,dashed] (-0.5,0) -- (-2,0);
    \draw[->-=.5,blue,dashed] (0.5,0) -- node[midway,above]{$e$} (2,0);
    \draw[->-=.5,red,dashed] (0.5,0) node[circle,fill,inner sep=1.25]{} to[out=90,in=90,looseness=1.7] (-0.5,0);
    \draw[->-=.5,red,dashed] (-0.5,0) node[black!65,circle,fill,inner sep=1.5]{} to[out=270,in=270,looseness=1.7] (0.5,0) node[black!65,circle,fill,inner sep=1.5]{};
    
    \draw[->-=.5, blue] (0:2) arc [radius=2, start angle=0, end angle=60];
    \draw[->-=.5, red] (60:2) arc [radius=2, start angle=60, end angle=120];
    \draw[->-=.5, blue] (120:2) arc [radius=2, start angle=120, end angle=180];
    \draw[->-=.5, blue] (180:2) arc [radius=2, start angle=180, end angle=240];
    \draw[->-=.5, red] (240:2) arc [radius=2, start angle=240, end angle=300];
    \draw[->-=.5, blue] (300:2) arc [radius=2, start angle=300, end angle=360];
    \node at (0:2) [circle, fill, inner sep = 1.5]{};
    \node at (60:2) [circle, fill, inner sep = 1.5]{};
    \node at (120:2) [circle, fill, inner sep = 1.5]{};
    \node at (180:2) [circle, fill, inner sep = 1.5]{};
    \node at (240:2) [circle, fill, inner sep = 1.5]{};
    \node at (300:2) [circle, fill, inner sep = 1.5]{};
    
    \draw[-to,thick] (2.8,0) -- node[midway, above]{expanding \textcolor{blue}{$e$}} (5.2,0);
    
    \begin{scope}[xshift=8cm]
    \draw[->-=.34,->-=.68,gray!60] (0.5,0) to[out=90,in=240] (60:2);
    \draw[->-=.34,->-=.68,gray!60] (0.5,0) to[out=270,in=120] (300:2);
    \draw[->-=.55,gray!60] (1,0) to[out=90,in=220,looseness=.8] (40:2);
    \draw[->-=.55,gray!60] (1,0) to[out=270,in=150,looseness=.8] (-40:2);
    \draw[gray!60] (1.5,0) to[out=90,in=200,looseness=.9] (20:2);
    \draw[gray!60] (1.5,0) to[out=270,in=160,looseness=.9] (-20:2);
    
    \draw[->-=.5,blue,dashed] (-0.5,0) -- (-2,0);
    \draw[red,dashed] (1.25,0) circle (.25);
    \draw[blue,dashed] (0.5,0) -- (1,0) node[black!65,circle,fill,inner sep=1.5]{};
    \draw[blue,dashed] (1.5,0) node[black!65,circle,fill,inner sep=1.5]{} -- (2,0);
    \draw[->-=.5,red,dashed] (0.5,0) node[circle,fill,inner sep=1.25]{} to[out=90,in=90,looseness=1.7] (-0.5,0);
    \draw[->-=.5,red,dashed] (-0.5,0) node[black!65,circle,fill,inner sep=1.5]{} to[out=270,in=270,looseness=1.7] (0.5,0) node[black!65,circle,fill,inner sep=1.5]{};
    
    \draw[blue] (0:2) arc [radius=2, start angle=0, end angle=20];
    \draw[red] (20:2) arc [radius=2, start angle=20, end angle=40];
    \draw[blue] (40:2) arc [radius=2, start angle=40, end angle=60];
    \draw[->-=.5, red] (60:2) arc [radius=2, start angle=60, end angle=120];
    \draw[->-=.5, blue] (120:2) arc [radius=2, start angle=120, end angle=180];
    \draw[->-=.5, blue] (180:2) arc [radius=2, start angle=180, end angle=240];
    \draw[->-=.5, red] (240:2) arc [radius=2, start angle=240, end angle=300];
    \draw[blue] (300:2) arc [radius=2, start angle=300, end angle=320];
    \draw[red] (320:2) arc [radius=2, start angle=320, end angle=340];
    \draw[blue] (340:2) arc [radius=2, start angle=340, end angle=360];
    \node at (0:2) [circle, fill, inner sep = 1.5]{};
    \node at (20:2) [circle, fill, inner sep = 1.5]{};
    \node at (40:2) [circle, fill, inner sep = 1.5]{};
    \node at (60:2) [circle, fill, inner sep = 1.5]{};
    \node at (120:2) [circle, fill, inner sep = 1.5]{};
    \node at (180:2) [circle, fill, inner sep = 1.5]{};
    \node at (240:2) [circle, fill, inner sep = 1.5]{};
    \node at (300:2) [circle, fill, inner sep = 1.5]{};
    \node at (320:2) [circle, fill, inner sep = 1.5]{};
    \node at (340:2) [circle, fill, inner sep = 1.5]{};
    \end{scope}
\end{tikzpicture}
\caption{An example of $\Phi(E \lhd e)$ from $\Phi(E)$, where $e$ is a blue edge.}
\label{fig_blue_C(A)}
\end{figure}
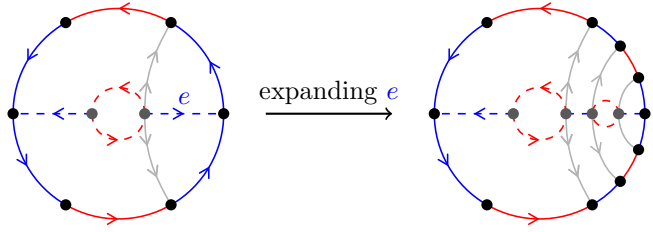

Now consider the map $\phi: T_A \to C_A$ defined in the following way. If $f$ is an element of $T_A$, let $D \to R$ be a graph pair diagram for $f$. Then $\phi(f)$ is the element of $C_A$ that is represented by the graph pair diagram $\Phi(D) \to \Phi(R)$, where the graph isomorphism is defined by the correspondence between red edges of $D$ (respectively $R$) and red edges of $\Phi(D)$ (respectively $\Phi(R)$) and between blue edges of $D$ (respectively $R$) and pairs of blue edges of $\Phi(D)$ (respectively $\Phi(R)$), which is, if $D \to R$ maps the edge $e_D$ to the edge $e_R$, then $\Phi(D) \to \Phi(R)$ maps the edge(s) corresponding to $e_D$ to the edge(s) corresponding to $e_R$. This definition does not depend on the graph pair diagram chosen to represent $f$, since expansions in $\mathcal{A}$ correspond to expansions in $\mathcal{C}_A$ (one-to one or ``one-to-two'', depending on the color of the edge).

Now, let $f$ and $g$ be rearrangements of the Airplane limit space, and consider their respective graph pair diagrams $D_f \to R_f$ and $D_g \to R_g$ such that $D_f = R_g$. Then their composition $f \circ g$ is represented by the graph pair diagram $D_g \to R_f$, so $\phi(f \circ g)$ is represented by $\Phi(D_g) \to \Phi(R_f)$. We also have that $\Phi(D_f) = \Phi(R_g)$, which means that the domain graph of $\Phi(D_f) \to \Phi(R_f)$ is the same as the range graph of $\Phi(D_g) \to \Phi(R_g)$. Hence, their composition $\phi(f) \circ \phi(g)$ is represented by $\Phi(D_g) \to \Phi(R_f)$. It is easy to see that these graph pair diagrams share the same graph isomorphism, thus $\phi$ is a group morphism.

Also, the kernel of $\phi$ is trivial. Indeed, if $f \in T_A$ is such that $\phi(f)$ is trivial, then consider a graph pair diagram for $f$ and let $e$ be a red edge of the domain graph: the corresponding edge in the domain graph of $\phi(f)$ must be fixed by $\phi(f)$, hence $e$ is fixed by $f$. The same holds for blue edges. Then $f$ must be trivial too.

Now, since $\phi$ is an injective morphism, we have found that $C_A$ contains an isomorphic copy of $T_A$. Since we have previously seen that $T$ contains an isomorphic copy of $C_A$, we can finally conclude that:
\begin{theorem}\label{theorem_T_contains_TA}
Thompson's group $T$ contains an isomorphic copy of $T_A$.
\end{theorem}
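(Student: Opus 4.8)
The plan is to obtain the inclusion $T_A \hookrightarrow T$ as a composite of two embeddings, routing through the intermediate circular rearrangement group $C_A$. That is, I would establish the two containments $T_A \hookrightarrow C_A$ and $C_A \hookrightarrow T$ separately, and then simply compose them; the theorem is the capstone assembling the work done just above its statement.

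For the containment $T_A \hookrightarrow C_A$ I would use the map $\phi\colon T_A \to C_A$ induced by the circularization $\Phi$ of expansions. The points to secure are that $\Phi$ carries each simple expansion of $\mathcal{A}$ to a simple expansion of $\mathcal{C(A)}$ (a single red replacement when $e$ is red, a pair of blue replacements when $e$ is blue), that the induced edge correspondence is one-to-one on red edges and ``one-to-two'' on blue edges, and, most importantly, that $\Phi$ is insensitive to the order in which simple expansions are performed. This order-independence is what makes $\Phi$ well-defined on the whole poset $\mathbb{E}_{\mathcal{A}}$ of expansions, and hence makes $\phi$ independent of the chosen graph pair diagram for a given $f \in T_A$. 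Multiplicativity of $\phi$ then drops out of the way composition is computed: if $D_g \to R_g$ and $D_f \to R_f$ are stacked so that $R_g = D_f$, then applying $\Phi$ produces the compatible stack $\Phi(R_g) = \Phi(D_f)$, so $\phi(f \circ g)$ and $\phi(f) \circ \phi(g)$ are represented by the same diagram. Injectivity follows because any nontrivial $f$ moves some red or blue edge, and the corresponding edge (or pair of edges) is then moved by $\phi(f)$.

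For the containment $C_A \hookrightarrow T$ I would invoke the general principle that the rearrangement group of any circular replacement system embeds in $T$. Since every expansion of a circular system is a closed path, the limit space is $S^1$ and rearrangements act piecewise-canonically on a nested system of arc-subdivisions; fixing, for each color, a single dyadic subdivision pattern with the correct number of pieces (four for red, three for blue) realizes every such subdivision as a dyadic subdivision of $S^1$ and turns each canonical homeomorphism into an affine, hence dyadic, map. The resulting orientation-preserving dyadic rearrangements are exactly the elements of $T$. As $\mathcal{C(A)}$ is manifestly circular, this yields an embedding $C_A \hookrightarrow T$.

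Composing the injective homomorphism $\phi\colon T_A \to C_A$ with the embedding $C_A \hookrightarrow T$ gives an injective homomorphism $T_A \to T$, which is precisely the assertion. I expect the genuine obstacle to lie not in the final composition but in the bookkeeping internal to $\phi$: one must verify that the ``one-to-two'' blue-edge correspondence interacts correctly with expansions of both colors and does not depend on the order of simple expansions, since this is exactly what guarantees that $\Phi$, and therefore $\phi$, is well-defined. Once that compatibility is in hand, the homomorphism and injectivity properties of $\phi$, and hence the theorem, follow routinely.
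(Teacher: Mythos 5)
Your proposal follows the same route as the paper: it factors the embedding as $T_A \hookrightarrow C_A \hookrightarrow T$ via the circularization $\Phi$ of expansions (one-to-one on red edges, one-to-two on blue edges), checks order-independence and injectivity of the induced map $\phi$, and invokes the general fact that rearrangement groups of circular replacement systems embed in $T$. This matches the paper's argument in both structure and detail.
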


The replacement system $\mathcal{C(A)}$ (Figure \ref{fig_replacement_C(A)}) and the way in which it is related to the Airplane limit space (as depicted in Figure \ref{fig_circularization}, \ref{fig_red_C(A)} and \ref{fig_blue_C(A)}) are inspired by the original study of the Basilica rearrangement group $T_B$ in \cite{Belk_2015}: there, the group is not defined by the action on the Basilica Julia set, but instead by its action on the lamination of the fractal. The lamination is essentially the ``explosion'' of the Basilica on $S^1$ and it expresses the canonical way in which the Basilica Julia set is a quotient of the circle. Here we essentially did the same for the Airplane, and it would be interesting to see how and when this can be replicated to other fractals. It must be noted, however, that there are fractals for which similar arguments would not provide an embedding into $T$ such as the Vicsek fractal (Example 1.12 of \cite{belk2016rearrangement}), for its rearrangement group contains finite subgroups that are not cyclic.

\section{\texorpdfstring{Rearrangements with trivial extremal derivatives}{Rearrangements with trivial extremal derivatives}}\label{section_E}

In this section we study the following subgroup of $[T_A, T_A]$:
\[ E := \{ f \in T_A \:|\: D_p (f) = 1, \, \forall p \in \mathcal{E} \}. \]
We show that $E$ is not finitely generated and then we study its transitivity properties. In particular, we will see that its action is 2-transitive on the set of components, and so is the action of both $T_A$ and $[T_A, T_A]$.

\subsection{\texorpdfstring{$E$ is not finitely generated}{E is not finitely generated}}

We say that a blue edge is \textbf{external} if one of its two vertices corresponds to an element of $\mathcal{E}$ in the limit space. With this in mind, we say that a blue cell is \textbf{external} if it is generated by an external blue edge.

Recall the definition of length of blue edges and cells given at page \pageref{length}. Recall from Subsection \ref{subsection_component_paths} that each component is uniquely identified on the ray on which it lies by a dyadic number in $(0,1)$. For each $n \in \mathbb{N} \setminus \{0\}$, let $C(n)$ be the set of those components that lie at position $1 - \frac{1}{2^n}$ on some ray. For each $n \in \mathbb{N}\setminus\{0\}$, we define $E_n$ to be the subset of $T_A$ consisting of all rearrangements $f$ such that:
\begin{itemize}
    \item $f$ acts by permutation on the set $C(n)$;
    \item $f$ acts canonically on the red cells corresponding to the boundaries of components in $C(n)$;
    \item $f$ acts canonically on the external blue cells of length $1 - \frac{1}{2^n}$.
\end{itemize} Intuitively, these rearrangements are the ones that act ``rigidly'' on anything that lies ``beyond'' a component of $C(n)$, while they act without further constraints on the inner part of the Airplane delimited by the components of $C(n)$. Figure \ref{fig_En}, for $n=1$ and $n=2$ respectively, exhibits: in \textcolor{red}{red} the component lying on the right horizontal ray that belongs to $C(n)$ and in \textcolor{blue}{blue} the corresponding set on which $E_n$ acts canonically. Keep in mind that $E_n$ acts canonically on similar sets on every ray.

\begin{figure}\centering
\begin{subfigure}{.4\textwidth}\centering
\begin{tikzpicture}[scale=1.1]
    \draw (-2,0) -- (-0.5,0);
    \draw (0,0) circle (0.5);
    \draw[red,fill=red!25] (1.25,0) circle (0.25);
    \draw (0.5,0) -- (1,0);
    \draw[blue] (1.5,0) -- (2,0);
\end{tikzpicture}
\caption{$n=1$.}
\end{subfigure}
\begin{subfigure}{.4\textwidth}\centering
\begin{tikzpicture}[scale=1.1]
    \draw (-2,0) -- (-0.5,0);
    \draw (0,0) circle (0.5);
    \draw (0.5,0) -- (1,0);
    \draw (1.25,0) circle (0.25);
    \draw[red,fill=red!25] (1.75,0) circle (0.1);
    \draw (1.5,0) -- (1.65,0);
    \draw[blue] (1.85,0) -- (2,0);
\end{tikzpicture}
\caption{$n=2$.}
\end{subfigure}
\caption{The \textcolor{red}{component} of $C(n)$ that lies on the right horizontal ray, for $n=1,2$. The groups $E_n$ act ``rigidly'' on the entire colored subset, and the same happens for each ray of the Airplane.}
\label{fig_En}
\end{figure}
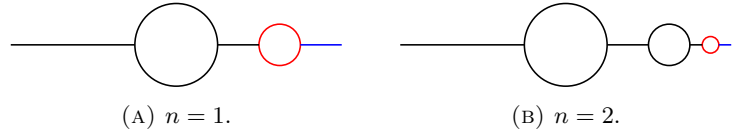

Note that all $E_n$ are groups, and they are all proper subgroups of $E$, because they preserve lengths around each extreme. Also, clearly $E_n \leq E_{n+1}$ for all $n \in \mathbb{N}\setminus\{0\}$. Moreover, it is easy to see that
\[ \bigcup\limits_{n>0} E_n = E. \]
Then, since $E$ is an ascending union of proper subgroups, we have that:

\begin{proposition}\label{proposition_E_finitely_generated}
The group $E$ is not finitely generated.
\end{proposition}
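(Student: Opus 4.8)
The plan is to deduce non-finite-generation from the standard criterion that a group which is the ascending union of a chain of \emph{proper} subgroups cannot be finitely generated, applied to the chain $E_1 \leq E_2 \leq \cdots$ built above. All three ingredients the criterion needs are already recorded in the discussion preceding the statement: each $E_n$ is a subgroup of $E$, the inclusions $E_n \leq E_{n+1}$ make the chain ascending, each $E_n$ is a proper subgroup of $E$, and $\bigcup_{n>0} E_n = E$.

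First I would argue by contradiction, supposing $E = \langle g_1, \dots, g_k \rangle$ for some finite set $g_1, \dots, g_k \in E$. Since $E = \bigcup_{n>0} E_n$, each $g_i$ lies in some $E_{n_i}$; setting $N := \max\{n_1, \dots, n_k\}$ and invoking the ascending property of the chain, all of the $g_i$ lie in the single subgroup $E_N$. Hence $E = \langle g_1, \dots, g_k \rangle \leq E_N$, contradicting the fact that $E_N$ is a proper subgroup of $E$. Therefore no finite generating set can exist, and $E$ is not finitely generated.

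The genuinely substantive point — the one the criterion rests on and the one I would verify with care — is the properness of each $E_n$ in $E$. The idea is that membership in $E$ constrains only the extremal derivatives $D_p$, whereas membership in $E_n$ additionally demands canonical, hence length-rigid, behaviour on the red and blue cells lying beyond the components of $C(n)$. For each fixed $n$ I would accordingly exhibit a rearrangement $f \in E$ acting non-canonically on one of these external blue cells — for instance by reshuffling its internal sub-structure — while keeping the outermost external edge at every extreme at its original length, so that $D_p(f) = 1$ for all $p \in \mathcal{E}$ and thus $f \in E$. Such an $f$ necessarily lies in $E \setminus E_n$, supplying the proper inclusion. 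This is exactly the ``standard argument'' alluded to in the introduction: because elements of $E$ are pinned down only near the extremes, there is always enough room deeper in the interior to build elements that escape any prescribed $E_n$, forcing the chain to be strictly exhausting and the group to be infinitely generated.
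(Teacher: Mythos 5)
Your proof is correct and follows essentially the same route as the paper: the paper also observes that $E_1 \leq E_2 \leq \cdots$ is an ascending chain of proper subgroups of $E$ whose union is all of $E$, and concludes non-finite-generation from the standard criterion you invoke. Your extra care in justifying properness of each $E_n$ (exhibiting an element of $E$ acting non-canonically beyond the components of $C(n)$) only makes explicit what the paper asserts briefly.
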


Recall that a subgroup $H$ of a finitely generated group $G$ is finitely generated too if the index $|G:H|$ is finite (see Corollary 9.2 at page 70 of \cite{Bogopolski}). Then we immediately have the following consequence of the previous proposition:
\begin{corollary}
The index of $E$ in $[T_A, T_A]$ is infinite.
\end{corollary}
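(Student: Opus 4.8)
The plan is to obtain the statement as an immediate contrapositive of the index–versus–finite-generation fact already cited, so that all the substantive content has in fact been established in Proposition~\ref{proposition_E_finitely_generated} and Theorem~\ref{theorem_commutator_TA_finitely_generated}. First I would record that $E$ really does sit inside $[T_A,T_A]$: every $f \in E$ has $D_p(f)=1$ for all $p \in \mathcal{E}$, hence its global extremal derivative $D(f)=\prod_{p \in \mathcal{E}} D_p(f)$ equals $1$, so $f \in Ker(D)=[T_A,T_A]$ by the characterization $[T_A,T_A]=Ker(D)$. Thus $E$ is a subgroup of the finitely generated group $[T_A,T_A]$ (Theorem~\ref{theorem_commutator_TA_finitely_generated}).

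The core step is then a single appeal to Corollary~9.2 of \cite{Bogopolski}, which guarantees that a finite-index subgroup of a finitely generated group is itself finitely generated. Arguing by contradiction, if the index $|[T_A,T_A]:E|$ were finite, that corollary would force $E$ to be finitely generated, directly contradicting Proposition~\ref{proposition_E_finitely_generated}. Hence $|[T_A,T_A]:E|$ must be infinite, which is the assertion.

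I do not expect any genuine obstacle here: the only thing requiring a moment's verification is the containment $E \leq [T_A,T_A]$, and even that is immediate from the kernel description of the commutator subgroup. All the real work—the infinite generation of $E$ via the ascending chain $E_1 \leq E_2 \leq \cdots$ of proper subgroups, and the finite generation of $[T_A,T_A]$—has already been carried out, so this corollary is purely formal and can be stated in a line or two.
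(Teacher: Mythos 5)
Your proof is correct and is essentially identical to the paper's: both deduce the statement from Proposition~\ref{proposition_E_finitely_generated}, Theorem~\ref{theorem_commutator_TA_finitely_generated}, and the cited fact (Corollary~9.2 of \cite{Bogopolski}) that a finite-index subgroup of a finitely generated group is finitely generated. Your extra check that $E \leq [T_A,T_A]$ via $D(f)=\prod_p D_p(f)=1$ is a harmless (and correct) addition that the paper handles implicitly when defining $E$.
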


\subsection{\texorpdfstring{Transitivity properties of $E$}{Transitivity properties of E}}\label{subsection_E_transitive}

If $C$ and $C'$ are two distinct components, we can define, similarly to what we have done in Subsection \ref{subsection_component_paths}, a unique component path that reaches $C'$ starting from $C$ instead of $C_0$. Let $n \geq 2$ and let $C_1, C_2, \dots, C_n$ be distinct components of the Airplane limit space. We say that these components are \textbf{aligned} if there exist two of them, $C_i$ and $C_j$, such that the component path between $C_i$ and $C_j$ travels through each component $C_k$. We then say that $C_i$ and $C_j$ are the \textbf{extremes} of these aligned components. It is not hard to see that rearrangements of the Airplane limit space preserve alignment, which is, if $f \in T_A$, then $C_1, C_2, \dots, C_n$ are aligned if and only if $f(C_1), f(C_2), \dots, f(C_n)$ are.

Note that, if $C_1, C_2, \dots, C_n$ are aligned, we can rename them so that $C_1$ and $C_n$ are the extremes and the components $C_i$ are ordered from the first to the last traveled from $C_1$ to $C_n$. Once we have renamed them in this fashion, we say that $(C_1, C_2, \dots, C_n)$ is an \textbf{ordered $n$-tuple of aligned components}.

It is clear that $E$ contains both $rist(C_0)$ and $[rist(Hor), rist(Hor)]$. Recall that $rist(C_0)$ acts transitively on the set of central rays (Theorem \ref{theorem_Airplane_rist_component}). Also, it is well-known that $[F,F]$ acts transitively on the set of ordered $n$-tuples of dyadic points of $(0,1)$. Hence, because of Theorem \ref{theorem_Airplane_rist_hor}, the group $[rist(Hor), rist(Hor)]$ acts transitively on the set of ordered $n$-tuples of aligned components that lie on $Hor$. Using these transitivity properties of $rist(C_0)$ and $[rist(Hor), rist(Hor)]$, it is not hard to prove the following result:

\begin{proposition}
For all natural $n>0$, the group $E$ acts transitively on the set of ordered $n$-tuples of aligned components.
\end{proposition}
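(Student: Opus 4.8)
The plan is to show that every ordered aligned $n$-tuple can be carried by some element of $E$ onto one fixed standard ordered aligned $n$-tuple lying on $Hor$. Since $E$ is a group whose elements preserve alignment, this suffices: once a tuple sits on $Hor$, we finish by the transitivity of $[rist(Hor), rist(Hor)] \leq E$ on ordered $n$-tuples of aligned components of $Hor$ recorded just before the statement, taking care to send the inner extreme to the inner end so that the orderings match. Throughout, the key point is that both $rist(C_0)$ and $[rist(Hor), rist(Hor)]$ are contained in $E$, so all the moves below stay inside $E$.

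First I would dispose of the case $n=1$, which is merely the transitivity of $E$ on the set of components. The proof of Lemma~\ref{lemma_commutator_TA_transitive} uses only the subgroups $rist(C_0)$ and $[rist(Hor), rist(Hor)]$, both of which lie in $E$, so that argument applies verbatim and gives $E$-transitivity on components. For $n \geq 2$ I would argue by induction, peeling off the inner extreme. Writing the tuple as $(C_1,\dots,C_n)$ with extremes $C_1,C_n$, I use the case $n=1$ to carry $C_1$ to the central component $C_0$; the image is still aligned, now with $C_0$ as inner extreme, so the arc it lies on leaves $C_0$ along some central ray $R$. By the $2$-transitivity of $rist(C_0) \leq E$ on central rays (Theorem~\ref{theorem_Airplane_rist_component}) I rotate $R$ onto the right horizontal ray $R_0$. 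The remaining components then hang off the arc issuing from $C_0$ along $R_0$, and the subtuple $(C_2,\dots,C_n)$ is again aligned; the task is reduced to straightening this arc onto $Hor$ and applying the induction hypothesis.

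The main obstacle is precisely this straightening. An aligned tuple may turn at intermediate components, so a priori it does not lie on the single line $Hor$, whereas the standard target does. I would remove these turns by the depth-reduction technique underlying Lemmas~\ref{lemma_TA_transitive} and~\ref{lemma_commutator_TA_transitive}: at a component where the arc turns, a rotation drawn from $rist(C_0)$ and transported into place by an $E$-element (so the resulting conjugate remains in $E$) realigns the outgoing ray, while the motions of $[rist(Hor), rist(Hor)]$ along $Hor$ slide components into position, lowering at each stage the depth of the part of the arc that is not yet on $Hor$. The delicate bookkeeping is to order these moves so that the portion already lying on $Hor$ is not disturbed; once that is arranged the configuration is driven entirely onto $Hor$, and a final application of the transitivity of $[rist(Hor), rist(Hor)]$ on aligned $n$-tuples of $Hor$ produces the chosen standard tuple, completing the induction.
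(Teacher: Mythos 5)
Your proposal is correct and follows essentially the same route the paper intends (the paper only sketches it): reduce an arbitrary ordered aligned tuple to one lying on $Hor$ using the transitivity of $E$ on components and of $rist(C_0)\simeq T$ on central rays, then finish with the transitivity of $[rist(Hor),rist(Hor)]$ on ordered aligned tuples of $Hor$. The ``delicate bookkeeping'' you defer is handled exactly as you suggest: at each turning component $C$ one uses an element of the conjugate copy $rist(C)\leq E$ of $rist(C_0)$ that fixes the adjacency point in the inward direction (available since $T$ is $2$-transitive on dyadic points of $S^1$), which then restricts to the identity on the inner blue cell and so leaves the already-straightened portion untouched.
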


Since any two components are aligned, the case $n=2$ is interesting on its own:
\begin{corollary}
The group $E$ acts 2-transitively on the set of components. In particular, the groups $[T_A, T_A]$ and $T_A$ act 2-transitively on the set of components.
\end{corollary}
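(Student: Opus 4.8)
The plan is to reduce everything to the two transitivity facts recalled just above --- that $rist(C_0)\le E$ is transitive on the central rays and that $[rist(Hor),rist(Hor)]\le E$ is transitive on ordered $n$-tuples of aligned components lying on $Hor$ --- together with the case $n=1$. I would first record that $E$ acts transitively on the set of all components: this is exactly Lemma~\ref{lemma_commutator_TA_transitive}, whose proof only uses elements of $\langle\beta,\gamma,\delta\rangle=rist(C_0)$ and of $[rist(Hor),rist(Hor)]$, both of which lie in $E$. With the case $n=1$ in hand, the whole Proposition follows once I can place an arbitrary ordered aligned tuple onto $Hor$, since $[rist(Hor),rist(Hor)]$ then carries it to any fixed standard tuple.

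For the reduction to $Hor$ I would exploit that rearrangements preserve component paths and alignment. Given an ordered aligned $n$-tuple $(C_1,\dots,C_n)$ with extremes $C_1,C_n$, the minimal component path from $C_1$ to $C_n$ passes through every $C_k$. First apply an element of $E$ sending $C_1$ to the central component $C_0$ (possible by the case $n=1$); this carries the tuple to one whose first component is $C_0$ and which still lies, in order, on the image path emanating from $C_0$. If I can map this path onto the right horizontal ray $R_0$ by an element of $E$ fixing $C_0$, then every $C_k$ lands on $R_0\subseteq Hor$ (with $C_1\mapsto C_0$), the image tuple is aligned and lies on $Hor$, and $[rist(Hor),rist(Hor)]$ finishes the job. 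So the crux is a straightening statement: any component path issuing from $C_0$ can be pushed onto $R_0$ by an element of $E$ fixing $C_0$.

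To perform the straightening I would manufacture, for each component $B$, a copy of Thompson's group $T$ inside $E$ acting on the rays departing from $B$: since $E$ is transitive on components, choose $h\in E$ with $h(B)=C_0$ and conjugate $rist(C_0)$ by $h$; as $rist(C_0)\le E$ and $h\in E$, this conjugate lies in $E$ and acts on the rays from $B$ as $T$ acts on $S^1$ (Theorem~\ref{theorem_Airplane_rist_component}). Inside this copy of $T$, the stabilizer of the inward ray (the one pointing back toward $C_0$) is a copy of $F$ that fixes that ray, and everything beyond it toward the centre, pointwise, while acting transitively on the remaining outward rays of $B$. Writing the path from $C_0$ as $(C_0,B_1,\dots,B_m)$, I would straighten it inductively from the centre outward: use $rist(C_0)$ to rotate the ray carrying $B_1$ onto $R_0$, and then, at each successive $B_j$ already lying on $R_0$, use the copy of $F$ at $B_j$ to bend the outward ray carrying $B_{j+1}$ onto the straight continuation of $R_0$ past $B_j$. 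Each such element fixes $C_0$ and all components already placed on $R_0$, and all of them lie in $E$, so their composite is the desired element of $E$ fixing $C_0$ and sending the whole path onto $R_0$.

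The main obstacle is precisely this straightening step: one must check that the conjugated rigid stabilizers genuinely lie in $E$ (which is why the conjugating homeomorphism is taken in $E$ rather than in $T_A$, where $E$ need not be normal) and that the copy of $F$ fixing the inward ray fixes the entire central-side region pointwise, so that previously straightened segments are not disturbed; the rest is bookkeeping with component paths. Finally, the Corollary is immediate: any two components are aligned, so the case $n=2$ gives $2$-transitivity of $E$ on components, and this transfers to $[T_A,T_A]$ and $T_A$ since $E\le[T_A,T_A]\le T_A$.
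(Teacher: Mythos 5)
Your architecture is the right one, and it is essentially what the paper intends (the paper gets the Corollary in one line from the Proposition on ordered $n$-tuples, whose proof it omits): use that $rist(C_0)$ and $[rist(Hor),rist(Hor)]$ lie in $E$ to get transitivity of $E$ on components, straighten an aligned pair onto $R_0$ by elements of $E$ fixing $C_0$, and finish inside $[rist(Hor),rist(Hor)]$. Several of your supporting observations are also correct: the proof of Lemma \ref{lemma_commutator_TA_transitive} only uses elements of $E$; conjugators must be taken in $E$ because $E$ is not normal in $T_A$; and the stabilizer of the inward direction in $rist(C_0)^h$ really does fix the whole central side of $B_j$ pointwise (the central-side region is a single connected component of the complement of $\overline{B_j}$, so $h$ carries it onto one primary blue cell, on which the relevant elements of $rist(C_0)$ act as the identity).

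The gap is in the bending step, exactly where you placed the weight of the proof. Write $r=h^{-1}\circ c\circ h$ with $h\in E$, $h(B_j)=C_0$, and $c$ in your stabilizer copy of $F$. Then $r$ maps the branch of $B_j$ containing $B_{j+1}$ onto the outward branch, but what it does \emph{inside} that branch is transported by $h$: the orbit of $B_{j+1}$ under your copy of $F$ meets the outward branch of $B_j$ in exactly one component, namely $h^{-1}(c(h(B_{j+1})))$, and for an arbitrary $h\in E$ there is no reason this component lies on $R_0$; it typically does not, and $r$ can moreover scramble the turn structure of the remaining path, so your outward induction has no quantity that decreases. Nor can you repair this by choosing $h$ canonical on the blue cells at $B_j$, because no element of $E$ can be: say $B_j$ sits at position $1-2^{-m}$ on $R_0$, so that its outward branch is a blue cell whose spine is the outer portion of $R_0$, of length $2^{-m}$ as a portion of $R_0$. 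The canonical homeomorphism from a perpendicular cell of $B_j$ onto this outward cell sends the blue edge of length $2^{-k}$ at the tip of the perpendicular ray to an edge of length $2^{-k-m}$ at the tip of $R_0$, i.e.\ it has extremal derivative $2^{-m}\neq 1$ there (and the inverse identification has derivative $2^{m}$ at the tip of $R_0$); hence rigid bending is never in $E$, while your non-rigid conjugates, which are in $E$, do not land $B_{j+1}$ on $R_0$. The statement is still true, but the missing ingredient is precisely this derivative bookkeeping: perform the rigid bending $r_j\notin E$ and then kill its two defects by composing with elements carrying the inverse defects at exactly those two extremes and nowhere else --- a suitable power of $\varepsilon$ for the tip of $R_0$, and the analogous $X_0$-like element supported on the cell of the displaced perpendicular ray --- using $D_p(f\circ g)=D_{g(p)}(f)\cdot D_p(g)$ to check that the composite lies in $E$, still fixes $C_0$ and the components already placed on $R_0$, carries $B_{j+1}$ onto $R_0$, and moves the rest of the path canonically so the induction closes. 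Without this cancellation argument (or an equivalent, equally deliberate construction of a special conjugator $h$), the proposed proof does not go through.
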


\begin{remark}
The group $T_A$ does \textbf{not} act 3-transitively on the set of components, therefore neither $[T_A, T_A]$ nor $E$ do. Indeed, consider three components that are not aligned (for example, any three components lying on three distinct central rays) and three components that are aligned (for example, any three components that lie on $Hor$): since rearrangements preserve alignment, there cannot exist any rearrangement that maps those first three components to the others.
\end{remark}

\section{\texorpdfstring{A copy of $T_B$ in $T_A$}{A copy of TB in TA}}\label{section_TB_in_TA}

In this section we exhibit an isomorphic copy of the Basilica rearrangement group $T_B$ that is contained in $T_A$. Note that it is already clear that $T_A$ contains an isomorphic copy of $T_B$: indeed, the subgroup $rist(C_0)$ of $T_A$ is isomorphic to $T$ (Theorem \ref{theorem_Airplane_rist_component}), and $T$ contains a copy of $T_B$ (proved in \cite{Belk_2015}). This reasoning, however, does not tell us much about the nature of this copy of $T_B$ in $T_A$. Here we instead find a natural copy of $T_B$ in $T_A$ by specifying its generators. In particular, this section focuses on proving the following result:

\begin{theorem}
$T_B \simeq \langle \alpha, \beta, \gamma, \delta \rangle \leq T_A$.
\end{theorem}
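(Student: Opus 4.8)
The plan is to construct the isomorphism explicitly by a combinatorial correspondence between graph pair diagrams, following the template already used in Section \ref{section_circular}. A presentation-based argument is not available here: the group $T_B$ is not finitely presented (Witzel--Zaremsky, \cite{witzar}), so we cannot hope to verify that $\alpha,\beta,\gamma,\delta$ satisfy a finite list of defining relations of $T_B$. Instead, recalling that $T_B$ is generated by the four rearrangements of Figure \ref{fig_TB_generators} (as proved in \cite{Belk_2015}) and that $\alpha,\beta,\gamma,\delta$ were designed to mirror them, I would build a map $\phi \colon T_B \to T_A$ directly from the replacement-system combinatorics and show that it is an injective homomorphism whose image is exactly $\langle \alpha,\beta,\gamma,\delta\rangle$.

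The central device is a correspondence $\Phi$ sending each expansion of the Basilica replacement system (Figure \ref{fig_replacement_B}) to an expansion of the Airplane replacement system (Figure \ref{fig_replacement_A}). The idea is to match the Basilica's \emph{loop-sprouts-loop} recursion with the Airplane's alternating \emph{component-sprouts-ray-bears-component} structure: a Basilica loop corresponds to an Airplane component, while the edge along which a sub-loop is attached corresponds to a ray carrying a single primary component. Concretely, I would send the Basilica base graph to the Airplane base graph, matching the central loop with $C_0$ and the two lateral loops with the two horizontal rays, and then extend $\Phi$ over simple expansions, each of which corresponds to one or two simple expansions in $\mathcal{A}$ (expanding a blue edge to place a component, and, when needed, first expanding a red edge to issue the connecting ray). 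After checking that $\Phi$ is well defined (independent of the order of simple expansions) and injective, one promotes it to $\phi$ exactly as in Section \ref{section_circular}: $\phi(f)$ is represented by $\Phi(D)\to\Phi(R)$ for any graph pair diagram $D\to R$ of $f$, the definition is independent of the diagram because $\Phi$ respects expansions, $\phi$ is a homomorphism by the same composition argument, and $\phi$ is injective because an edge is fixed by $\phi(f)$ precisely when the corresponding edge is fixed by $f$. Verifying that the four Basilica generators are sent to $\alpha,\beta,\gamma,\delta$ then identifies the image of $\phi$ with $\langle\alpha,\beta,\gamma,\delta\rangle$.

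The main obstacle is the genuine structural mismatch between the two fractals: Basilica loops are topological circles that sprout further circles, whereas in the Airplane a component sprouts \emph{rays}, which are intervals, and only these rays then carry components. Thus $\Phi$ cannot come from a homeomorphism of limit spaces and must be handled purely combinatorially; more importantly, the rays inserted by $\Phi$ carry extra length data (the ``$\varepsilon$-direction'') that has no analogue in the Basilica. The crux is therefore to prove that $\alpha,\beta,\gamma,\delta$---unlike $\varepsilon$---always act \emph{rigidly} on these connecting rays, in the sense that their reduced graph pair diagrams never subdivide a ray beyond its primary component; equivalently, that $\langle\alpha,\beta,\gamma,\delta\rangle$ is closed under the Basilica-style combinatorics encoded by $\Phi$. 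Establishing this closure, together with the compatibility of $\Phi$ with simple expansions, is where the real work lies; once it is in place, well-definedness, the homomorphism property, and injectivity follow along the lines of Section \ref{section_circular}, giving $T_B \simeq \langle\alpha,\beta,\gamma,\delta\rangle$.
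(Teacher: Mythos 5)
Your route is genuinely different from the paper's. The paper does not attempt any correspondence between expansions of the two replacement systems. Instead it lets both groups act on isomorphic combinatorial objects: $T_B$ acts faithfully by automorphisms on the adjacency tree $\Gamma_B$ of components of the Basilica, and $H = \langle\alpha,\beta,\gamma,\delta\rangle$ acts faithfully on the tree $\Gamma_A$ whose vertices are the Airplane components with component paths of the form $\bigl((\theta_1, \tfrac{2^{k_1}-1}{2^{k_1}}), \dots, (\theta_n, \tfrac{2^{k_n}-1}{2^{k_n}})\bigr)$ and whose edges record $\mathfrak{C}$-adjacency. Both trees are rooted regular trees of infinite degree, hence isomorphic; since the four Basilica generators act on $\Gamma_B$ exactly as $\alpha,\beta,\gamma,\delta$ act on $\Gamma_A$, the two groups have the same image in the automorphism group of the tree, and faithfulness of both actions gives the isomorphism. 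This reduces the whole proof to a finite check on the generators plus one faithfulness verification, and entirely avoids the combinatorial bookkeeping your $\Phi$ would require.

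That bookkeeping is where your proposal has real gaps, and you have located the crux in the wrong place. The ``closure'' property you single out --- that reduced diagrams of elements of $\langle\alpha,\beta,\gamma,\delta\rangle$ never subdivide a connecting ray beyond its primary component --- is not needed for the statement: you are building $\phi\colon T_B \to T_A$, so once $\phi$ is an injective homomorphism sending the four Basilica generators to $\alpha,\beta,\gamma,\delta$, its image is automatically $\langle\alpha,\beta,\gamma,\delta\rangle$ because $T_B$ is generated by those four elements; no characterization of the image as ``all rearrangements with diagrams in the image of $\Phi$'' is required. Conversely, the step you do need --- the construction of $\Phi$ itself --- is harder than its analogue in Section~\ref{section_circular}, because the edge correspondence is not a bijection: expanding a Basilica loop produces three new edges (two circle arcs and an outer loop), while the corresponding Airplane expansion of a blue edge produces four (an inner blue edge, two red arcs, an outer blue edge). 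The inner blue edges have no Basilica counterpart, so the induced graph isomorphism $\Phi(D)\to\Phi(R)$ must be defined on them separately (it is forced by incidence, but this has to be stated and checked), and the independence of $\phi(f)$ from the chosen diagram, together with the verification that the four Basilica generators map precisely to $\alpha,\beta,\gamma,\delta$, all remain to be carried out. None of this looks fatal, but as written the proposal defers exactly the content that would constitute the proof.
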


We start by defining a non-directed graph $\Gamma_B$ in the following way:
\begin{itemize}
    \item its vertices are the components of $B$;
    \item its edges link pairs of adjacent components.
\end{itemize}
Note then that $\Gamma_B$ is a rooted regular tree of infinite degree, where the root is the central component. since rearrangements of the Basilica limit space permute components of $B$ and preserve their adjacency, $T_B$ acts faithfully on $\Gamma_B$ by graph automorphisms. 
This means that there exists an injective morphism $\phi_B: T_B \to G$, where $G := Aut(\Gamma_B)$.

We now define a similar tree for the Airplane limit space by considering the components whose component paths have the following form:
\[ \left(\left(\theta_1,\frac{2^{k_1}-1}{2^{k_1}}\right), \dots, \left(\theta_n,\frac{2^{k_n}-1}{2^{k_n}}\right) \right), \]
for some naturals $n>0$ and $k_i>0$, and for some dyadic $\theta_i \in [0,1)$.
We denote by $\mathfrak{C}$ the set consisting of all of these components, along with the central component. Figure \ref{fig_mathrak_C} depicts examples of components that belong to $\mathfrak{C}$ (in \textcolor{blue}{blue}) and that do not (in \textcolor{red}{red}). Note that these components are the ones that can be obtained by halving any central ray (which locates certain central components), then halving the external remaining part of that ray or halving any new ray departing from the located component, and so on until one stops at the newfound component.

\begin{figure}[t]\centering
\begin{tikzpicture}[scale=1.8]
\draw (-2,0) -- (2,0);
\draw (1.25,0.65) -- (1.25,0);
\draw (-0.75,0) -- (-0.75,0.383);
\draw[blue,fill=blue!25] (0,0) circle (0.5);
\draw[blue] (0,0) node{$C_0$};
\draw[blue,fill=blue!25] (-1.25,0) circle (0.25);
\draw[blue] (-1.25,.25) node[above]{$C_1$};
\draw[blue,fill=blue!25] (1.25,0) circle (0.25);
\draw[blue] (1.42,-.08) node[below right]{$C_3$};
\draw[red,fill=red!25] (0.75,0) circle (0.083);
\draw[red,fill=red!25] (-0.75,0) circle (0.083);
\draw[blue,fill=blue!25] (-1.75,0) circle (0.083);
\draw[blue] (-1.75,.083) node[above]{$C_2$};
\draw[red,fill=red!25] (-0.75,0.233) circle (0.05);
\draw[blue,fill=blue!25] (1.25,0.45) circle (0.0666);
\draw[blue] (1.3166,.45) node[right]{$C_4$};
\draw[blue,fill=blue!25] (-1.9165,0) circle (0.0275);
\end{tikzpicture}
\caption{The components colored in \textcolor{blue}{blue} belong to $\mathfrak{C}$, while those in \textcolor{red}{red} do not.}
\label{fig_mathrak_C}
\end{figure}
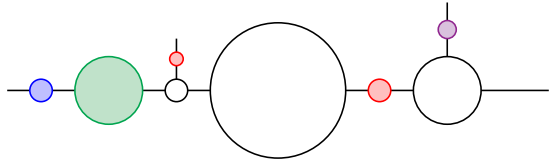

Note that these components have a natural concept of adjacency: the component path representing a component $C$ that belongs to $\mathfrak{C}$ travels through a finite amount of other components in $\mathfrak{C}$, the last of which is said to be $\mathfrak{C}$-adjacent to $C$. For example, in Figure \ref{fig_mathrak_C} the component $C_1$ is adjacent to both $C_2$ and $C_0$, but it is not adjacent to $C_3$ nor $C_4$ (which are themselves adjacent to each other); moreover, $C_2$ and $C_0$ are not adjacent (despite being related).

Now, let $\Gamma_A$ be the graph defined in the following way:
\begin{itemize}
    \item its vertices are the elements of $\mathfrak{C}$;
    \item its edges link pairs of $\mathfrak{C}$-adjacent components.
\end{itemize}
Note that $\Gamma_A$ is a rooted regular tree of infinite degree, hence it is isomorphic to $\Gamma_B$, and so the previously defined group $G=Aut(\Gamma_B)$ is isomorphic to $Aut(\Gamma_A)$. We can then refer to $G$ as the automorphism group of both $\Gamma_B$ and $\Gamma_A$.

Let $H := \langle \alpha, \beta, \gamma, \delta \rangle \leq T_A$. It is easy to see that the action of $H$ maps elements of $\mathfrak{C}$ to elements of $\mathfrak{C}$ and preserves $\mathfrak{C}$-adjacency, hence it is by graph automorphisms on $\Gamma_A$. It can also be showed that the action is faithful: it suffices to prove that, if $h \in H$ fixes each component in $\mathfrak{C}$, then it must fix each component $C$ of the Airplane. Now, since the action of $H$ on $\Gamma_A$ is faithful and it is by graph automorphisms, there exists an injective group morphism $\phi_A: H \to G$. Finally, it is easy to note that the four generators of the group $T_B$ (depicted in Figure \ref{fig_TB_generators}) act on $\Gamma_B$ exactly as $\alpha, \beta, \gamma$ and $\delta$ do on $\Gamma_A$, which proves that $T_B \simeq \langle \alpha, \beta, \gamma, \delta \rangle \leq T_A$.

\printbibliography[heading=bibintoc]

\end{document}